\newtheorem{thm}{Theorem}[section]
\newtheorem{lemma}[thm]{Lemma}
\newtheorem{cor}[thm]{Corollary}
\newtheorem{prop}[thm]{Proposition}
\theoremstyle{definition}
\newtheorem{example}[thm]{Example}
\newtheorem{remark}[thm]{Remark}
\newtheorem{definition}[thm]{Definition}
\numberwithin{equation}{section}
\newtheorem{case}{Case}
\newtheorem{subcase}{Case}
\numberwithin{subcase}{case}
\numberwithin{subsubcase}{subcase}
\numberwithin{subsubsubcase}{subsubcase}
\newtheorem*{examplecase}{Example Case}
\newcommand{\orig}{\mathbf{0}}
\newcommand{\Z}{\mathbb{Z}}
\newcommand{\Q}{\mathbb{Q}}
\newcommand{\Proj}{\mathbb{P}}
\newcommand{\Hom}[1]{\mathrm{Hom}\mleft({#1}\mright)}
\newcommand{\abs}[1]{\left\vert{#1}\right\vert}
\newcommand{\mult}[1]{\mathrm{mult}\mleft({#1}\mright)}
\newcommand{\intr}[1]{\mathrm{int}\mleft({#1}\mright)}
\newcommand{\V}[1]{\mathrm{vert}\mleft({#1}\mright)}
\newcommand{\dual}[1]{{#1}^\vee}
\newcommand{\bdual}[1]{\dual{\mleft({#1}\mright)}}
\newcommand{\conv}[1]{\mathrm{conv}\mleft({#1}\mright)}
\newcommand{\sconv}[1]{\mathrm{conv}\mleft\{{#1}\mright\}}
\newcommand{\scone}[1]{\mathrm{cone}\mleft\{{#1}\mright\}}
\newcommand{\hmin}{{h_\mathrm{min}}}
\newcommand{\hmax}{{h_\mathrm{max}}}
\newcommand{\mut}{\mathrm{mut}}
\newcommand{\Hilb}[1]{\mathrm{Hilb}\mleft({#1}\mright)}
\newcommand{\NQ}{N_\Q}
\newcommand{\MQ}{M_\Q}
\renewcommand{\gcd}[1]{\mathrm{gcd}\mleft\{{#1}\mright\}}
\renewcommand{\min}[1]{\mathrm{min}\mleft\{{#1}\mright\}}
\newcommand{\Xn}[3]{X_{#1}^{(#2,#3)}}
\newcommand{\GL}{\mathrm{GL}}
\begin{document}
\author[T.~Coates]{Tom Coates}
\author[S.~Gonshaw]{Samuel Gonshaw}
\author[A.~M.~Kasprzyk]{Alexander Kasprzyk}
\author[N.~Nabijou]{Navid Nabijou}
\address{Department of Mathematics\\Imperial College London\\London, SW$7$\ $2$AZ\\UK}
\email{t.coates@imperial.ac.uk}
\email{samuel.gonshaw10@imperial.ac.uk}
\email{a.m.kasprzyk@imperial.ac.uk}
\email{{navid.nabijou09@imperial.ac.uk}}
\subjclass[2010]{52B20 (Primary); 14J33, 14J45 (Secondary). }
\keywords{Lattice polytopes; mutations; cluster transformations; mirror symmetry; Fano varieties; canonical singularities; terminal singularities; projective space}
\title{Mutations of Fake Weighted Projective Spaces}
\begin{abstract} 
We characterise  mutations between fake weighted projective spaces, and give explicit formulas for how the weights and multiplicity change under mutation. In particular, we prove that multiplicity-preserving  mutations between fake weighted projective spaces are mutations over edges of the corresponding simplices. As an application, we analyse the canonical and terminal fake weighted projective spaces of maximal degree.
\end{abstract}
\maketitle
\section{Introduction}\label{sec:introduction}
In this paper we analyse mutations between fake weighted projective spaces; equivalently we analyse mutations between lattice simplices. Mutations arise naturally when considering mirror symmetry for Fano manifolds. A Fano manifold $X$ is expected to correspond under mirror symmetry to a Laurent polynomial~\cite{EHX,BCKvS98,HV,BCKvS00,Ba04,Auroux,ProcECM,QC105}. In general there will be many different Laurent polynomials which correspond to a given Fano manifold, and it is expected that these Laurent polynomials are related via birational transformations analogous to cluster transformations~\cite{FZ,GU10,ACGK12,GHK13}. These cluster-style transformations act on Newton polytopes via mutations; see Definition~\ref{defn:mutation} below. A mutation between Newton polytopes can be thought of as the tropicalisation of the corresponding cluster-type transformation. Ilten~\cite{Ilt12} has shown that if two lattice polytopes $P$ and $Q$ are related by mutation then the corresponding toric varieties $X_P$ and $X_Q$ are deformation equivalent: there exists a flat family $\mathcal{X}\rightarrow\Proj^1$ such that $\mathcal{X}_0\cong X_P$ and $\mathcal{X}_\infty\cong X_Q$. Mutations are thus expected to form the one-skeleton of the tropicalisation of the Hilbert scheme. Our understanding of this one-skeleton is rudimentary but improving~\cite{ACGK12,AK13}; in this paper we conduct the first systematic analysis in higher dimensions.

The notion of mutation raises many new and interesting combinatorial questions: for example, how can polytopes be classified up to mutation, and what properties of polytopes are mutation-invariant? Here we begin to address these questions by analysing the behaviour of lattice simplicies under mutation. In two dimensions, Akhtar and Kasprzyk determined how the weights of a fake weighted projective plane, i.e.~the weights of a lattice triangle, change under mutation, and showed that mutations between fake weighted projective planes are multiplicity-preserving~\cite{AK13}. We show below that the situation in higher dimensions is different. We give an explicit formula (Theorem~\ref{mutationformula}) for how the weights of a fake weighted projective space, i.e.~the weights of a lattice simplex, change under mutation, and derive a strong necessary condition (Theorem~\ref{onlyedges}) for a mutation to preserve multiplicity.
In~\S\S\ref{sec:canonical_sings}--\ref{sec:terminal_sings} we apply our results to the study of fake weighted projective spaces of high degree with canonical and terminal singularities.

\subsection*{Acknowledgements}

The authors thank Mohammad Akhtar and Alessio Corti for useful
conversations. This research is supported by the Royal Society, ERC
Starting Investigator Grant number~240123, the Leverhulme Trust, and
EPSRC grant EP/I008128/1.

\section{Fake Weighted Projective Space} \label{sec:fake_wps}
We begin by recalling some standard definitions. Throughout let
$N\cong\Z^n$ denote a lattice of rank $n$ with dual lattice
$M:=\Hom{N,\Z}$. From the toric viewpoint $N$ corresponds to the
lattice of one-parameter subgroups and $M$ corresponds to the lattice
of characters. For an introduction to toric geometry see~\cite{Dan78}.

\begin{definition} \label{defn:Fano_polytope}
  A convex lattice polytope $P\subset\NQ:=N\otimes_\Z\Q$ is said to be \emph{Fano} if:
  \begin{enumerate}
  \item\label{item:nondegenerate}
    $P$ is of maximum dimension, that is $\dim{P}=n$;
  \item\label{item:Fano_orig}
    the origin is contained in the strict interior of $P$, that is $\orig\in\intr{P}$;
  \item\label{item:Fano_primitive}
    the vertices $\V{P}$ of $P$ are primitive lattice points.
  \end{enumerate}
\end{definition}

In addition to being compelling combinatorial objects, Fano polytopes
are in bijective correspondence with toric Fano varieties:
see~\cite{KN12} for an overview. The complete fan in $N$ generated by
the faces of $P$, which we call the \emph{spanning fan} of $P$,
corresponds to a toric Fano variety $X_P$, that is, to a (possibly
singular) projective toric variety with ample anticanonical divisor
$-K_X$. Two Fano polytopes $P$ and $P'$ give isomorphic varieties
$X_P\cong X_{P'}$ if and only if there exists a change of basis of the
underlying lattice $N$ sending $P$ to $P'$. Thus we regard $P$ as
being defined only up to $\GL_n(\Z)$-equivalence.

\begin{definition} \label{defn:weights}
  Let $P:=\sconv{v_0,\ldots,v_n}\subset\NQ$ be a Fano $n$-simplex. By
  Definition~\ref{defn:Fano_polytope}\eqref{item:Fano_orig} there
  exists a unique choice of $n+1$ coprime positive integers
  $\lambda_0,\ldots,\lambda_n\in\Z_{>0}$ such that
  $\lambda_0v_0+\ldots+\lambda_nv_n=\orig$. These are called the
  \emph{(reduced) weights} of $P$. 
\end{definition}

Definition~\ref{defn:Fano_polytope}\eqref{item:Fano_primitive} implies
that any $n$ of the weights are also coprime, or in other words that
the weights are \emph{well-formed}. See~\cite[\S5]{I-F00} for details
of the natural role that reduced and well-formed weights play in the
study of weighted projective space.

\begin{definition}\label{defn:fake_wps} 
  Let $P\subset\NQ$ be a Fano
  $n$-simplex and let $N':=v_0\cdot\Z+\ldots+v_n\cdot\Z$ be the
  sublattice in $N$ generated by the vertices of $P$. The rank-one
  $\Q$-factorial toric Fano variety $X_P$ given by the spanning fan of
  $P$ is $X_P=\Proj(\lambda_0,\ldots,\lambda_n) / (N/N')$, where the
  group $N/N'$ acts freely in codimension one. We call $X_P$ a
  \emph{fake weighted projective space}. 
\end{definition}

Fake weighted projective spaces have been studied
in~\cite{Con02,Buc08,Kas09}. One important invariant is the
\emph{multiplicity}: the index of the sublattice $N'$ in $N$, denoted
by $\mult{P}:=[N:N']$. A fake weighted projective space is a weighted
projective space if and only if
$\mult{P}=1$~\cite[Proposition~2]{BB92}.

\section{Mutations} \label{sec:mutations}
We recall the definition of mutation, following~\cite[\S3]{ACGK12}. A
primitive element $w\in M$ determines a surjective linear map $w
\colon N\rightarrow\Z$ which extends naturally to a map
$\NQ\rightarrow\Q$. A point $v\in\NQ$ is said to be at \emph{height}
$w(v)$. Given a subset $S\subset\NQ$, if $w(v)=h$ for all $v\in S$ we
say that $S$ lies at height $h$ and write $w(S)=h$. The hyperplane
$H_{w,h}$ is defined to be the set of all points in $\NQ$ at height
$h$. For a convex lattice polytope $P\subset\NQ$ we define
$w_h(P):=\conv{H_{w,h}\cap P\cap N}$ to be the (possibly empty) convex
hull of all lattice points in $P$ at height $h$. We set
$\hmin:=\min{w(v)\mid v\in P}$ to be the minimum height occurring
amongst the the points of $P$, and $\hmax$ to be the maximum height.
Since $P$ is a lattice polytope, both $\hmin$ and $\hmax$ are
integers. If $\orig\in\intr{P}$, and in particular if $P$ is Fano,
then $\hmin<0$ and $\hmax>0$.

\begin{definition}\label{defn:factor}
  A \emph{factor} of $P\subset\NQ$ with respect to a primitive height
  function $w\in M$ is a lattice polytope $F\subset\NQ$ such that:
  \begin{enumerate} 
  \item $w(F)=0$; 
  \item for every integer $h$ with $\hmin\leq h<0$, there exists a (possibly empty) lattice polytope
    $G_h\subset\NQ$ such that
    $H_{w,h}\cap\V{P}\subseteq G_h+(-h)F\subseteq w_h(P)$.
  \end{enumerate} 
\end{definition}

\begin{definition}\label{defn:mutation}
  The \emph{(combinatorial) mutation} of $P\subset\NQ$ with respect to
  the primitive height function $w\in M$ and a factor $F\subset\NQ$ is
  the convex lattice polytope
  \[
  \mut_w(P,F):=\conv{\bigcup_{h=\hmin}^{-1}G_h\cup\bigcup_{h=0}^\hmax\left(w_h(P)+hF\right)}\subset\NQ.
  \]
\end{definition}

Although this is not obvious from the definition, the mutation
$\mut_w(P,F)$ is independent of the choice of the
$G_h$~\cite[Proposition~1]{ACGK12}. Furthermore, $\mut_w(P,F)$ is a
Fano polytope if and only if $P$ is a Fano
polytope~\cite[Proposition~2]{ACGK12}. More obviously, mutations are
always invertible (if $Q:=\mut_w(P,F)$ then
$P\cong\mut_{-w}(Q,F)$)~\cite[Lemma~2]{ACGK12}, and translating the
factor results in an isomorphic mutation (i.e.
$\mut_w(P,F)\cong\mut_w(P,F+v)$ for any $v\in N$ with $w(v)=0$).

Mutations have a natural description as transformations of the dual polytope
\[
\dual{P}:=\{u\in\MQ\mid u(v)\geq -1\text{ for all $v\in P$}\}.
\]
A mutation induces a piecewise $\GL_n(\Z)$-transformation $\varphi
\colon u\mapsto u-u_\mathrm{min} w$ of $\MQ$ such that
$\bdual{\varphi(\dual{P})}=\mut_w(P,F)$; here
$u_\mathrm{min}:=\min{u(v)\mid v\in F}$. This is analogous to a cluster
transformation. As a consequence $\abs{k\dual{P}\cap
  M}=\abs{k\dual{Q}\cap M}$ for any dilation $k\in\Z_{\geq 0}$, where
$Q:=\mut_w(P,F)$~\cite[Proposition~4]{ACGK12}; hence
$\Hilb{X_P,-K_{X_P}}=\Hilb{X_Q,-K_{X_Q}}$ and $X_P$ and $X_Q$ have the
same anticanonical degree.

\begin{example}
  \label{exa:P1113P1146}
  The weighted projective spaces $\Proj(1,1,1,3)$ and $\Proj(1,1,4,6)$
  have the largest degree amongst all canonical\footnote{See~\S\ref{sec:canonical_and_terminal_sings} below for a discussion of canonical singularities.} \emph{toric} Fano
  threefolds~\cite{Kas08a} and amongst all \emph{Gorenstein} canonical
  Fano threefolds~\cite{Pro05}. They are related by a mutation
  \cite[Example 7]{ACGK12}. The simplex associated to
  $\Proj(1,1,1,3)$ is
  $P:=\sconv{(1,0,0),(0,1,0),(0,0,1),(-1,-1,-3)}\subset\NQ$. Setting
  $w = (-1,2,0)\in M$ gives $\hmin = {-1}$ and $\hmax = 2$, with
  $w_{-1}(P)$ equal to the edge $\sconv{(-1,-1,-3),(1,0,0)}$ and
  $w_{2}(P)$ given by the vertex $(0,1,0)$. The factor
  \[
  F:=\sconv{(0,0,0),(2,1,3)}
  \]
  gives:
  \[
  \mut_w(P,F) = \sconv{(-1,-1,-3),(0,0,1),(0,1,0),(4,3,6)}
  \]
  and this is the simplex associated with $\Proj(1,1,4,6)$. This mutation is illustrated in Figure~\ref{fig:maximal_degree}.
\end{example}

\begin{figure}[htbp]
\centering
\includegraphics{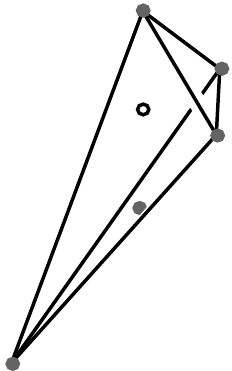}
\raisebox{73pt}{\hspace{15pt}$\longmapsto$\hspace{-10pt}}
\includegraphics{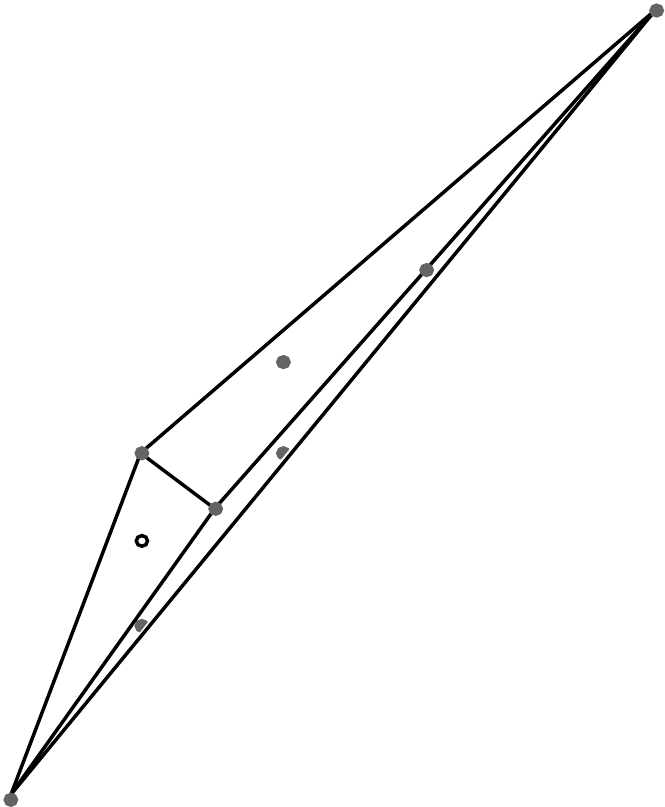}
\caption{An edge mutation from the polytope corresponding to $\Proj(1,1,1,3)$, depicted on the left, to the polytope corresponding to $\Proj(1,1,4,6)$. This mutation is described in Example~\ref{exa:P1113P1146}.}
\label{fig:maximal_degree}
\end{figure}

\section{Mutations of $n$-Simplices}\label{sec:basic_formulae}

We begin by establishing some basic properties of  mutations
between simplices. Throughout this section we assume that $P$ is a
Fano $n$-simplex, and that $w \in M$ and $F \subset \NQ$ are
(respectively) a primitive height function and a factor such that
$Q := \mut_w(P,F)$ is a simplex. In other words, we assume that there
is a  mutation from the fake weighted projective space $X$
associated with $P$ to the fake weighted projective space $Y$
associated with $Q$.

\begin{lemma}\label{ncasesprop}
  Let $P$ be a Fano $n$-simplex, let $w \in M$ be a primitive height
  function, and let $F \subset \NQ$ be a factor such that $Q :=
  \mut_w(P,F)$ is a simplex. Suppose that the mutation
  from $P$ to $Q$ is non-trivial, so that $P \not \cong Q$. Then
  $w_\hmax(P)$ is a vertex of $P$ and $F$ is a translation of
  $\frac{1}{\abs{\hmin}}w_\hmin(P)$.
\end{lemma}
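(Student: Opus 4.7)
The plan is to extract two Minkowski-sum identities from the mutation data, apply a structural result on Minkowski decompositions of simplices, and combine this with the affine independence of the vertices of a simplex.

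First I would establish the two identities
\begin{equation*}
w_\hmax(Q) = w_\hmax(P) + \hmax F \quad\text{and}\quad w_\hmin(P) = w_\hmin(Q) + \abs{\hmin}F.
\end{equation*}
The first follows from the mutation formula, since nothing in $Q$ lies above height $\hmax$ and the only summand contributing to $Q$ at this height is $w_\hmax(P)+\hmax F$. For the second, the factor condition at $h=\hmin$ gives $G_\hmin+\abs{\hmin}F\subseteq w_\hmin(P)$, and the reverse inclusion follows since the Minkowski sum on the left is convex and contains every vertex of the simplex face $w_\hmin(P)$; moreover $w_\hmin(Q)=G_\hmin$ because only $G_\hmin$ contributes to $Q$ at the minimum height.

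The key ingredient is the following: if a lattice simplex $\Delta$ admits a Minkowski decomposition $\Delta=A+B$ with $\abs{\V{A}},\abs{\V{B}}\geq 2$, then $A$ and $B$ are both homothetic to $\Delta$, of the form $A=\alpha\Delta+c_A$ and $B=(1-\alpha)\Delta+c_B$ for some $0<\alpha<1$. I would prove this via normal fans: the normal fan of $A+B$ is the common refinement of the normal fans of $A$ and $B$, but for any simplex the union of a proper subset of the maximal cones of its (simplicial) normal fan fails to be convex, so neither $A$ nor $B$ can properly coarsen the normal fan of $\Delta$. Applying this to the two identities above, and assuming the mutation is non-trivial so that $\abs{\V{F}}\geq 2$, one obtains the dichotomy: each of $w_\hmax(P)$ and $w_\hmin(Q)$ is either a single point or homothetic to $F$.

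Suppose for contradiction that $w_\hmax(P)$ has at least two vertices. Then it is homothetic to $F$, a non-trivial simplex of some dimension $k\geq 1$ whose linear direction space $V$ (a $k$-dimensional subspace of $\ker w$) agrees with that of $F$. Applying the dichotomy to the second identity shows that $w_\hmin(P)$ is likewise homothetic to $F$, and so is also a $k$-simplex with direction space $V$. The $2(k+1)$ distinct vertices of $P$ at the extreme heights then lie in a single affine subspace of dimension $k+1$ --- the one spanned by $V$ together with the height direction --- contradicting the affine independence of the vertices of the simplex $P$, since $2(k+1)>(k+1)+1$ whenever $k\geq 1$. Hence $w_\hmax(P)$ is a single vertex.

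With $w_\hmax(P)$ reduced to a vertex, the first identity shows the top face of $Q$ is a translate of $\hmax F$, hence homothetic to $F$. A symmetric application of the same affine-independence argument, this time inside the simplex $Q$, rules out the possibility that $w_\hmin(Q)$ has at least two vertices, so $w_\hmin(Q)=\{g\}$ is a single point. Rearranging the second identity then gives $F=\frac{1}{\abs{\hmin}}(w_\hmin(P)-g)$, a translation of $\frac{1}{\abs{\hmin}}w_\hmin(P)$, as claimed. The main obstacle is the normal-fan lemma above: although elementary, it requires a careful verification that no proper union of maximal cones of a simplex's normal fan is convex, since this is what forces both Minkowski summands into homothetic form and powers the subsequent affine-independence argument.
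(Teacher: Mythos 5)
Your proof is correct and rests on the same two pillars as the paper's: the Minkowski identities $w_\hmax(Q) = w_\hmax(P) + \hmax F$ and $w_\hmin(P) = w_\hmin(Q) + \abs{\hmin}F$, together with the fact that a Minkowski summand of a simplex which is not a point must be a homothet of that simplex. For the latter the paper simply cites Shephard's ``Decomposable convex polyhedra'' (Result 13), whereas you sketch a normal-fan proof; that sketch is the one delicate point in your write-up (you would need to check both that no union of between $2$ and $d$ maximal cones of the normal fan of a $d$-simplex is convex, and that a polytope sharing the normal fan of a simplex is a homothet of it), but since the statement is classical and the paper is content to cite it, this is not a gap. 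Where you genuinely diverge is in how the contradictions are extracted. For the first claim the paper chooses edges $E_1 \subseteq w_\hmax(P)$ and $E_2 \subseteq w_\hmin(P)$, shows they are non-parallel by affine independence of $\V{P}$, and then exhibits the quadrilateral $E_1 + E_2'$ as a face of $Q$; you instead note that both extreme slices of $P$ would be homothets of $F$ with a common direction space, forcing $2(k+1)$ vertices of $P$ into a $(k+1)$-dimensional affine subspace. Your version is slightly cleaner in that a single affine-independence count replaces the two-step ``non-parallel edges, hence quadrilateral face'' argument. For the second claim the paper invokes invertibility of mutation, writing $P = \mut_{-w}(Q,F)$ and applying the first part to the reverse mutation, whereas you rerun the dimension count directly inside $Q$; both are valid, and yours avoids the appeal to the invertibility lemma of Akhtar--Coates--Galkin--Kasprzyk.
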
 

\begin{proof}
  We first consider $w_\hmax(P)$. Suppose for a contradiction that
  $w_\hmax(P)$ is not a vertex of $P$. Then in particular it contains
  an edge $E_1$ of $P$. Let us pick an edge $E_2$ contained in
  $w_\hmin(P)$ (if such an edge does not exist then $w_\hmin(P)$ is a
  vertex and the mutation is trivial). Note that $E_1$ and $E_2$
  cannot be parallel: if they were then the four endpoints of
  $E_1$ and $E_2$ would lie in a common two-dimensional affine
  subspace and thus would be affinely dependent; this contradicts the fact that
  $P$ is a simplex. It is an immediate consequence of the definition
  of mutation that $w_\hmax(Q) = w_\hmax(P) +\hmax F$. Recall that $F$
  is a Minkowski factor of $w_\hmin(P)$. Because $P$ is a simplex,
  $w_\hmin(P)$ is also a simplex and so by~\cite[Result 13]{Shep63}
  $F$ is a dilation and translation of $w_\hmin(P)$. It follows that
  $\hmax F$ is also a dilation and translation of $w_\hmin(P)$. Let
  $E_2^\prime$ denote the edge of $\hmax F$ corresponding to the edge
  $E_2$ of $w_\hmin(P)$. Then $E_1$ and $E_2^\prime$ are not parallel,
  because $E_1$ and $E_2$ are not parallel. Thus the face $E_1 +
  E_2^\prime$ of $w_\hmax(Q) = w_\hmax(P) +\hmax F$ is a
  quadrilateral. Since $Q$ is by assumption a simplex, and therefore
  all faces of $Q$ are simplices, this gives a contradiction. We
  conclude that $w_\hmax(P)$ consists of a single vertex.

  Now consider the factor $F$. By the definition of factor there exists a lattice polytope $G_\hmin$ such that
  \begin{equation*} 
    w_\hmin(P) = G_\hmin +\abs{\hmin} F.
  \end{equation*}
  We claim that $G_\hmin$ is a point. After mutation we have
  $w_\hmin(Q) = G_\hmin$. As discussed above, we can mutate $Q$ back
  to $P$ by taking the height function $-w$ and the factor $F$. But
  $(-w)_{-\hmin}(Q) = w_\hmin(Q) = G_\hmin$ and we conclude from the
  first part of the proposition that $G_\hmin$ is a point: $G_\hmin =
  \{v\}$. It follows that $\abs{\hmin}F = w_\hmin(P) - v$, and so $F$
  is a translation of $\frac{1}{\abs{\hmin}}w_\hmin(P)$ as
  required.
\end{proof}

\begin{definition} 
  If $w_\hmin(P)$ has $k+1$ vertices then we say that
  the corresponding mutation is a \emph{mutation over a $k$-face}.
  Cases of particular interest are $k=1$ and $k=n-1$, which we call
  \emph{mutations over edges} and \emph{mutations over facets},
  respectively. 
\end{definition}

\begin{lemma}\label{zeroheight}
  Let $P$ be a Fano $n$-simplex, let $w \in M$ be a primitive height
  function, and let $F \subset \NQ$ be a factor such that $Q :=
  \mut_w(P,F)$ is a simplex. Let $v\in\V{P}$ be such that $w(v)
  \ne \hmax$ and $w(v) \ne \hmin$. Then $w(v)=0$. 
\end{lemma}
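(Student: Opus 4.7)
The strategy is proof by contradiction. Suppose some vertex $v\in\V{P}$ has $w(v) = h$ with $\hmin < h < \hmax$ and $h \ne 0$. By replacing the mutation with its inverse (exchanging $P\leftrightarrow Q$ and $w\leftrightarrow -w$) if necessary, I may assume $h > 0$. By Lemma~\ref{ncasesprop}, $w_\hmax(P) = \{v_0\}$ and $F$ is a translate of $\tfrac{1}{|\hmin|}w_\hmin(P)$; writing $w_\hmin(P) = \sconv{u_0, \ldots, u_\ell}$, I may choose the translate so that $F = \sconv{0, f_1, \ldots, f_\ell}$ with $f_i = (u_i - u_0)/|\hmin|$.

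As in the proof of Lemma~\ref{ncasesprop}, $w_\hmax(Q) = v_0 + \hmax F$ is a face of $Q$ with vertices $v_0 + \hmax f_i$; and since $v$ is a vertex of $w_h(P)$, we also have $v + hF \subseteq Q$, contributing the lattice points $v + hf_i$ of $Q$ at height $h$. All $2(\ell+1)$ of these lattice points lie in the $(\ell+1)$-dimensional affine subspace $U\subset\NQ$ spanned, based at $v_0$, by $f_1, \ldots, f_\ell$ together with $v - v_0$; the last vector is independent of the $f_i$ because $w(v - v_0) = h - \hmax \ne 0$. Since $Q$ is an $n$-simplex with affinely independent vertices, at most $\ell + 2$ of them can lie in $U$. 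The $\ell + 1$ points $v_0 + \hmax f_i$ are already vertices of $Q$ in $U$, so at most one of the points $v + hf_i$ can be a vertex of $Q$; hence at least $\ell \ge 1$ of them are non-vertex lattice points of $Q$. Pick such an index $i$ and write $v + hf_i$ as a convex combination of the vertices of $Q$.

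Substituting $f_k = (u_k - u_0)/|\hmin|$ throughout converts this combination into an affine relation in $\NQ$ with support $\{v, v_0, u_0, \ldots, u_\ell\}$ together with the intermediate vertices of $Q$, and in which the coefficient of $v$ equals $1$. Running the analogous affine-dependency analysis at every intermediate height of $P$ shows that each intermediate vertex of $Q$ takes the form $v_j + h_j f_{i^*}$ for some intermediate vertex $v_j$ of $P$ and some index $i^*$; substituting these expressions reduces the above relation to one among the $n+1$ affinely independent vertices of the simplex $P$, in which the coefficient of $v$ is still $1$. This contradicts the affine independence of $P$'s vertices, forcing $h = 0$. The principal obstacle is precisely this substitution step: controlling the bijective correspondence between intermediate vertices of $P$ and $Q$, which requires running the affine-dependency argument simultaneously at every intermediate height.
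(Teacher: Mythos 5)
Your setup (reducing to $h>0$ by inverting the mutation, normalising $F$ via Lemma~\ref{ncasesprop}, and observing that $v+hF\subseteq Q$ while $v_0+\hmax F$ is a face of $Q$) is sound and parallels the paper. But the argument has a genuine gap at its core. Your dimension count only shows that at least $\ell$ of the points $v+hf_i$ are lattice points of $Q$ that are \emph{not} vertices of $Q$ --- and there is nothing contradictory about a simplex containing non-vertex lattice points. The contradiction you then try to extract cannot work as described: the vertices of $P$ are affinely independent, so every point of $\NQ$ has a \emph{unique} affine representation in terms of them. Expanding the convex combination of $Q$-vertices in terms of $P$-vertices must therefore reproduce exactly the same coefficients as the direct expression $v+\frac{h}{\abs{\hmin}}(u_i-u_0)$; both have coefficient $1$ on $v$, and no inconsistency arises. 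The ``substitution step'' you flag as the principal obstacle is not merely hard to control --- even if carried out, it yields a tautology rather than a contradiction.

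The missing idea is to show that $v+hF$ is not just a subset of $Q$ but a \emph{face} of $Q$; then all $\ell+1$ of its vertices $v+hf_i$ would be vertices of $Q$, and your own count (at most one of them can be) would finish the argument --- or, as the paper does, one obtains a quadrilateral two-dimensional face $\sconv{v_0+\hmax f_1,v_0+\hmax f_2,v+hf_1,v+hf_2}$ of the simplex $Q$, which is absurd. The paper achieves the face property by introducing an auxiliary primitive functional $w'\in M$ vanishing on the $n-1$ vertices $v_1,\ldots,v_{n-1}$ (hence on $F$) and separating $v_0$ from $v=v_n$: since $w'(x)=w'(x+w(x)F)$, the sets $v_0+\hmax F$ and $v_n+w(v_n)F$ are cut out of $Q$ by the supporting hyperplanes of $w'$ and are therefore faces. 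Some such separating-functional (or supporting-hyperplane) argument is indispensable here; without it your proof does not close.
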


\begin{proof}
  Let us write $P =\sconv{ v_0,\ldots, v_n}$ and $w_\hmin(P)
  =\sconv{v_1 ,\ldots, v_k}$. If $k=n$ then
  the statement holds vacuously, so let us assume that $k<n$. Without
  loss of generality we may assume that $\orig\in\V{F}$. In view
  of Lemma~\ref{ncasesprop} we may assume further that
  $w_\hmax(P) = \{v_0\}$, that $w_\hmin(P)
  = v_1+\abs{\hmin}F$, and that
  \begin{equation*} 
    F :=\sconv{\orig,\frac{1}{\abs{\hmin}}(v_2 - v_1),\ldots,\frac{1}{\abs{\hmin}}(v_k - v_1)}.
  \end{equation*}
  Suppose for a contradiction that there exists some $v\in\V{P}$
  such that $w(v) \ne \hmax$, $w(v) \ne \hmin$, and $w(v)\neq 0$.
  Without loss of generality we can take $v=v_n$.

  Suppose first that $w(v_n)>0$. Let $w^\prime\in M$ be a
  primitive lattice point such that $w^\prime(v_i)=0$ for $v_i\in\{
  v_1,\ldots, v_{n-1}\}$. We can choose $w^\prime$ so that
  $w^\prime(v_0)>0$ and $w^\prime(v_n)<0$. Let
  $h_\text{max}^\prime=\mathrm{sup}\{w^\prime(p)\mid p\in P\}$ and
  $h_\text{min}^\prime=\mathrm{inf}\{w^\prime(p)\mid p\in P\}$. We see
  that $H_{w^\prime,h_\text{max}^\prime}\cap P =\{v_0\}$ and
  $H_{w^\prime,h_\text{min}^\prime}\cap P =\{v_n\}$. Note that by the
  definition of $F$, $w^\prime(F)=0$ and so $w^\prime(x) = w^\prime(x
  + w(x)F)$ for all $x\in P$ (with $w(x)\geq 0$). Then
  $H_{w^\prime,h_\text{max}^\prime}\cap Q = v_0 +\hmax F = v_0 +
  w(v_0)F$ and $H_{w^\prime,h_\text{min}^\prime}\cap Q = v_n +
  w(v_n)F$. Thus $v_0+w(v_0)F$ and $v_n+w(v_n)F$ are two faces of $Q$,
  with vertices $v_0+w(v_0)f$ and $v_n+w(v_n)f$ for $f\in\V{F}$.
  Let $f_1,f_2\in\V{F}$ be any two distinct vertices of $F$; these
  exist since otherwise $F$ is a point and the mutation is trivial. As
  $Q$ is a simplex 
  \begin{equation*}
    F^\prime:=\sconv{v_0+w(v_0)f_1,v_0+w(v_0)f_2,v_n+w(v_n)f_1,v_n+w(v_n)f_2}
  \end{equation*}
  is a two-dimensional face of $Q$. But $F^\prime$ has four vertices and
  thus is not a simplex. This gives a contradiction.

  Suppose instead that $w(v_n)<0$. By~\cite[Lemma~3.7]{ACGK12} there
  exists a $v_Q\in\V{Q}$ and $v_F\in\V{F}$ such that
  $v_n=v_Q-w(v_Q)v_F$. Applying $w$ to this equation yields
  $w(v_Q)=w(v_n)<0$. We now have $\mut_{-w}(Q,F) = P$ but $-w(v_Q)>0$
  which, by the preceding argument, gives a contradiction. It follows
  that $w(v_n) = 0$.
\end{proof}

Combining the previous two results gives a necessary and sufficient
combinatorial condition for the existence of a mutation between
simplices.

\begin{lemma}\label{iffmut}
Let $P$ be a Fano $n$-simplex and let $w \in M$ be a primitive height
  function. Let the vertices of $P$ be $\{v_0,v_1,\ldots,v_n\}$,
  ordered such that $w_\hmin(P) =\sconv{v_1,\ldots, v_k}$. There
  exists a factor $F \subset \NQ$ such that $Q :=
  \mut_w(P,F)$ is a simplex if and only if the following hold:
  \begin{enumerate}
  \item\label{item:iffmut_1} $w_{\hmax}(P)$ is a vertex;
  \item\label{item:iffmut_2} $\hmin\divides v_i - v_1$ for $i\in\{1,\ldots, k\}$;
  \item\label{item:iffmut_3} if $v$ is a vertex of $P$ such that $w(v) \ne \hmax$ and
    $w(v) \ne \hmin$, then $w(v) = 0$. 
  \end{enumerate} 
\end{lemma}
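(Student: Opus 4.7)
The plan is to derive necessity directly from Lemmas~\ref{ncasesprop} and~\ref{zeroheight}, and to prove sufficiency by writing down an explicit factor and verifying that the resulting mutation has exactly $n+1$ affinely independent vertices. The trivial case $F=\{\orig\}$ (which gives $Q=P$ regardless of (i)--(iii)) should be set aside first; in the necessity direction we may then assume the mutation is non-trivial.

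For necessity, let $F$ be a factor with $Q:=\mut_w(P,F)$ a non-trivial mutation that is a simplex. Lemma~\ref{ncasesprop} directly yields~\eqref{item:iffmut_1}, and also tells us that $F$ is a translate of $\frac{1}{\abs{\hmin}}w_\hmin(P)$; since $F$ is a lattice polytope, its vertex differences $\frac{1}{\abs{\hmin}}(v_i-v_1)$ lie in $N$, which gives~\eqref{item:iffmut_2}. Condition~\eqref{item:iffmut_3} is exactly Lemma~\ref{zeroheight}.

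For sufficiency, assume~\eqref{item:iffmut_1}--\eqref{item:iffmut_3} and take the candidate factor
\[
F:=\sconv{\orig,\ \frac{1}{\abs{\hmin}}(v_2-v_1),\ \ldots,\ \frac{1}{\abs{\hmin}}(v_k-v_1)},
\]
which is a lattice polytope by~\eqref{item:iffmut_2}. To verify that $F$ is a factor, observe that $w$ vanishes on $\orig$ and on each $v_i-v_1$ (both endpoints at height $\hmin$). Set $G_\hmin:=\{v_1\}$, so that $v_1+\abs{\hmin}F=w_\hmin(P)$; for $\hmin<h<0$, condition~\eqref{item:iffmut_3} ensures $H_{w,h}\cap\V{P}=\emptyset$, so $G_h=\emptyset$ suffices.

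Set $Q:=\mut_w(P,F)$ and propose the vertex list $u_0:=v_0$, $u_i:=v_0+\frac{\hmax}{\abs{\hmin}}(v_i-v_1)$ for $i=2,\ldots,k$, $u_1:=v_1$, and $u_i:=v_i$ for $i=k+1,\ldots,n$ (all at height $0$ by~\eqref{item:iffmut_3}). The matrix sending the basis $\{v_j-v_0\}$ to $\{u_i-u_0\}$ is lower triangular with diagonal entries $1,\frac{\hmax}{\abs{\hmin}},\ldots,\frac{\hmax}{\abs{\hmin}},1,\ldots,1$, so the $u_i$ are affinely independent, and $\sconv{u_0,\ldots,u_n}\subseteq Q$ is immediate from the mutation formula. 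The main obstacle is the reverse inclusion: every element of $w_h(P)+hF$ with $0\le h\le\hmax$ has the form $y+hf$ with $y=\sum_i\alpha_iv_i$ a convex combination at height $h$ and $f=\sum_{i=2}^k\gamma_i\frac{1}{\abs{\hmin}}(v_i-v_1)$ with $\gamma_i\ge 0$ and $\sum_i\gamma_i\le 1$. Substituting $v_i=u_1+\frac{\abs{\hmin}}{\hmax}(u_i-u_0)$ for $2\le i\le k$ produces an affine combination of the $u_j$ whose coefficients sum to $1$; all coefficients except that of $u_0$ are manifestly non-negative, and the $u_0$-coefficient simplifies, via the height identity $\alpha_0\hmax=h+\abs{\hmin}\sum_{i=1}^k\alpha_i$, to $\frac{1}{\hmax}\bigl(h(1-\sum_i\gamma_i)+\abs{\hmin}\alpha_1\bigr)\ge 0$. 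This gives $Q=\sconv{u_0,\ldots,u_n}$, a simplex.
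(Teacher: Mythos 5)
Your proof is correct, and the necessity direction coincides with the paper's (which simply invokes Lemmas~\ref{ncasesprop} and~\ref{zeroheight}; your extra observation that condition~\eqref{item:iffmut_2} is forced because the translate $\frac{1}{\abs{\hmin}}w_\hmin(P)$ must be a \emph{lattice} polytope is the right way to extract~\eqref{item:iffmut_2} from Lemma~\ref{ncasesprop}, and your remark that the trivial factor $F=\{\orig\}$ must be excluded is a fair point about the statement that the paper leaves implicit). The sufficiency direction is where you genuinely diverge: the paper defines the same factor $F$ but then cites \cite[Lemma~3.7]{ACGK12} to get $\V{Q}\subseteq\{v_0',\ldots,v_n'\}$ and \cite[Proposition~3.11]{ACGK12} to conclude that $Q$ is Fano, hence full-dimensional, hence has at least $n+1$ vertices and is therefore the simplex on those $n+1$ points. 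You instead verify the factor axioms by hand (with $G_\hmin=\{v_1\}$ and $G_h=\emptyset$ for $\hmin<h<0$, using~\eqref{item:iffmut_3}), establish affine independence of the $u_i$ via the triangular change-of-basis matrix, and prove the containment $Q\subseteq\sconv{u_0,\ldots,u_n}$ by a direct convexity computation whose only nontrivial point --- non-negativity of the $u_0$-coefficient --- reduces correctly via the height identity to $\frac{1}{\hmax}\bigl(h(1-\sum_i\gamma_i)+\abs{\hmin}\alpha_1\bigr)\geq 0$. I checked this computation and it is right. The trade-off is the usual one: the paper's route is shorter but leans on two external results from \cite{ACGK12}, whereas yours is self-contained and in fact reproves the relevant special case of \cite[Lemma~3.7]{ACGK12} along the way, at the cost of some bookkeeping.
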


\begin{proof} 
  The ``only if'' direction is  Lemmas~\ref{ncasesprop} 
  and~\ref{zeroheight}. On the other 
  hand, it is clear that if conditions~\eqref{item:iffmut_1}--\eqref{item:iffmut_3} are satisfied, then
  defining 
  \begin{equation*}
    F :=\sconv{\orig,\frac{1}{\abs{\hmin}}(v_2 -
      v_1),\ldots,\frac{1}{\abs{\hmin}}(v_k - v_1)}
  \end{equation*}
  gives a factor with respect to $w$. Let us label the vertices of
  $P=\sconv{v_0,\ldots,v_n}$ so that: 
  \begin{align*}
    w(v_0)=\hmax, &&
    w(v_1)=\ldots=w(v_k) =\hmin, &&
    w(v_{k+1})=\ldots=w(v_n)=0.
    \end{align*} 
    From~\cite[Lemma~3.7]{ACGK12} we have that 
  $\V{Q}\subseteq\{v_0^\prime,v_1^\prime,\ldots,v_n^\prime\}$, where:
  \[
  v_i^\prime=
  \begin{cases}
    v_i&\text{ if }i=0, i=1,\text{ or }i\in\{k+1,\ldots,n\}\\
    v_0+\frac{\hmax}{\abs{\hmin}}(v_i-v_1)&\text{ if }i\in\{2,\ldots,k\}.
  \end{cases}
  \]
Furthermore by~\cite[Proposition~3.11]{ACGK12} we have that $Q$ is
Fano, and hence is of maximal dimension in the $n$-dimensional
lattice. It follows that $Q$ must have at least $n+1$ vertices. Thus
$Q=\sconv{v_0^\prime,v_1^\prime,\ldots,v_n^\prime}$, and so $Q$ is a
simplex.
\end{proof}

\begin{example}
  \label{exa:P1148P1114}
  Consider the $3$-simplex $P =\sconv{ (1,-1,0), (-2,-2,-1),
    (-2,-2,1), (0,1,0) }$. The weighted projective space
  associated to $P$ is $\Proj(1,1,4,8)$. Let $w = (1, 0, 0)\in
  M$. Lemma~\ref{iffmut} implies that there is a factor $F$ such
  that $Q:= \mut_w(P,F)$ is the simplex:
    \begin{equation*} Q
      =\sconv{(1,-1,0),(-2,-2,-1),(1,-1,1),(0,1,0)}
    \end{equation*} 
    The weighted projective space associated to $Q$ is
    $\Proj(1,1,1,4)$.
\end{example}

The main result of this paper is:

\begin{thm}\label{mutationformula}
  Let $X$ and $Y$ be fake weighted projective spaces related by a
   mutation. Suppose that $P=\sconv{v_0,\ldots,v_n}\subset\NQ$ is the simplex corresponding to
  $X$, that $Q \subset \NQ$ is the simplex corresponding to $Y$, and that the weights of $X$ are
  $\lambda_0,\lambda_1,\ldots,\lambda_n$.
  Let $w\in M$ be the primitive height function and $F \subset \NQ$ be
  the factor such that $Q = \mut_w(P,F)$.
  Then we may relabel the vertices of $P$ such that $w_\hmax(P)=v_0$,
  $w_\hmin(P)=\sconv{v_1,\ldots,v_k}$, and the weights of $Y$ are:
  \begin{equation*} \frac{1}{d}\left(\lambda_0\lambda_1,(\lambda_1+\ldots+\lambda_k)^2,\lambda_0\lambda_2,\ldots,\lambda_0\lambda_k
      ,\lambda_{k+1}(\lambda_1+\ldots+\lambda_k),\ldots,\lambda_n(\lambda_1+\ldots+\lambda_k)\right)
\end{equation*}
  where $d$ is a positive integer satisfying: 
  \begin{equation*}
    d\cdot\frac{\mult{X}}{\mult{Y}}=\frac{\lambda_0^{k-1}}{(\lambda_1
      +\ldots +\lambda_k)^{k-2}} \end{equation*} 
\end{thm}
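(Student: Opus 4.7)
The plan is in three steps: use Lemma~\ref{iffmut} to pin down the vertices of $Q$; derive the up-to-scalar linear dependence among these new vertices by matching against the unique relation on $\V{P}$; and finally invoke the invariance of the anticanonical degree under mutation to extract the multiplicity ratio.

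By Lemma~\ref{iffmut}, after relabelling I may assume $w(v_0)=\hmax$, $w(v_1)=\ldots=w(v_k)=\hmin$, $w(v_i)=0$ for $i\in\{k+1,\ldots,n\}$, and
\[
v_i^\prime=\begin{cases} v_i & \text{if } i\in\{0,1,k+1,\ldots,n\},\\ v_0+\dfrac{\hmax}{\abs{\hmin}}(v_i-v_1) & \text{if } i\in\{2,\ldots,k\}. \end{cases}
\]
Applying $w$ to the defining relation $\sum_i\lambda_iv_i=\orig$ yields the key scalar identity $\lambda_0\hmax=\abs{\hmin}\mu$, where I set $\mu:=\lambda_1+\ldots+\lambda_k$; equivalently $\hmax/\abs{\hmin}=\mu/\lambda_0$.

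To determine the weights, I would write the unknown dependence $\sum_ia_iv_i^\prime=\orig$, substitute the formulas for $v_i^\prime$, and collect coefficients of each $v_j$. Because $P$ is a Fano $n$-simplex, the space of linear relations on $v_0,\ldots,v_n$ is one-dimensional and spanned by $(\lambda_0,\ldots,\lambda_n)$, so the collected coefficients must equal $c(\lambda_0,\ldots,\lambda_n)$ for some scalar $c$. Solving the resulting linear system, with the identity $\hmax/\abs{\hmin}=\mu/\lambda_0$ used to reduce the equations for $i\in\{2,\ldots,k\}$, yields
\[
(a_0,a_1,a_2,\ldots,a_k,a_{k+1},\ldots,a_n)=c\cdot\left(\frac{\lambda_0\lambda_1}{\mu},\mu,\frac{\lambda_0\lambda_2}{\mu},\ldots,\frac{\lambda_0\lambda_k}{\mu},\lambda_{k+1},\ldots,\lambda_n\right).
\]
Taking $c=\mu$ clears denominators and produces the integer tuple displayed in the theorem; $d$ is then precisely the gcd of those entries, and dividing through gives the reduced, well-formed weights $\lambda_0^\prime,\ldots,\lambda_n^\prime$ of $Y$.

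For the multiplicity identity, I would use that mutation preserves the anticanonical degree (as noted in Section~\ref{sec:mutations}) together with the standard formula $(-K_{X_P})^n=(\sum_i\lambda_i)^n/(\lambda_0\cdots\lambda_n\cdot\mult{P})$ for a fake weighted projective space. A direct count on the integer tuple gives $\sum_ia_i=\mu\Lambda$ where $\Lambda:=\lambda_0+\ldots+\lambda_n$, and $\prod_ia_i=\lambda_0^{k-1}\mu^{n-k+2}\prod_i\lambda_i$. After dividing by the appropriate power of $d$ to pass from the $a_i$ to the $\lambda_i^\prime$, equating $(-K_X)^n=(-K_Y)^n$ and cancelling the common factors of $\Lambda^n$ and $\prod_i\lambda_i$ rearranges to $d\cdot\mult{X}/\mult{Y}=\lambda_0^{k-1}/\mu^{k-2}$. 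The only real difficulty is arithmetic bookkeeping of the powers of $d$ when passing from the unnormalised tuple to the reduced weights; no new ideas are needed beyond Lemma~\ref{iffmut} and the toric degree formula.
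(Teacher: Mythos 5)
Your proposal is correct and follows essentially the same route as the paper: identify the vertices of $Q$ via Lemma~\ref{iffmut}, match the unique (up to scalar) linear relation among the $v_i'$ against that among the $v_i$ using $\hmax/\abs{\hmin}=(\lambda_1+\ldots+\lambda_k)/\lambda_0$, clear denominators and divide by the gcd, then equate anticanonical degrees to get the multiplicity identity. The only cosmetic difference is that the paper phrases the uniqueness step via normalised barycentric coordinates (which forces an extra check that a certain coefficient is nonnegative), whereas you invoke directly that the space of linear relations among $n+1$ points spanning $\NQ$ is one-dimensional; both amount to the same computation.
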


\begin{proof}
  By Lemma~\ref{iffmut} we have $Q=\sconv{v_0^\prime,v_1^\prime,\ldots,v_n^\prime}$ where:
  \begin{equation}
    \label{eq:vprime}
    v_i^\prime=
    \begin{cases}
      v_i&\text{ if }i=0, i=1,\text{ or }i\in\{k+1,\ldots,n\}\\
      v_0+\frac{\hmax}{\abs{\hmin}}(v_i-v_1)&\text{ if }i\in\{2,\ldots,k\}.
    \end{cases}
  \end{equation}
  Since $P$ is Fano we may assume that the weights
  $\lambda_0,\lambda_1,\ldots,\lambda_n$ are well-formed. We normalise
  by setting $h:=\sum_{i=0}^n\lambda_i$ and $\lambda_i^\prime
  :=\frac{1}{h}\lambda_i$. Then $\sum_{i=0}^n\lambda_i^\prime v_i
  =\orig$, $\sum_{i=0}^n\lambda_i^\prime = 1$, and
  $\lambda_i^\prime\geq 0$ for all $i$. The sequence
  $\lambda^\prime_0,\lambda^\prime_1,\ldots,\lambda^\prime_n$ is
  unique with these properties; these are the \emph{normalised barycentric
    co-ordinates} for $P$.

  Since $Q$ is Fano there exist $\mu_0^\prime,\mu_1^\prime,\ldots,\mu_n^\prime$ such
  that $\sum_{i=0}^n\mu_i^\prime v_i^\prime =\orig$,
  $\sum_{i=0}^n\mu_i^\prime = 1$, and $\mu_i^\prime\geq 0$ for all
  $i$. From $\sum_{i=0}^n\mu_i^\prime v_i^\prime =\orig$ and
  \eqref{eq:vprime} we have that:
  \[
  \left(\mu_0^\prime +\sum_{i=2}^k\mu_i^\prime\right) v_0
  +\left(\mu_1^\prime
    -\frac{\hmax}{\abs{\hmin}}\sum_{i=2}^k\mu_i^\prime\right) v_1
  +\sum_{i=2}^k\mu_i^\prime\frac{\hmax}{\abs{\hmin}} v_i
  +\sum_{i=k+1}^n\mu_i^\prime v_i =\orig.
  \]
Let $\theta_i$ denote the coefficient of $v_i$ in the expression
above. We claim that the $\theta_i$ are normalised barycentric
co-ordinates. It is clear that $\sum_{i=0}^n\theta_i
=\sum_{i=0}^n\mu_i^\prime = 1$, and that
$\theta_0,\theta_2,\theta_3,\ldots,\theta_n\geq 0$. It remains to
check that $\theta_1\geq 0$. Suppose for a contradiction that
$\theta_1 < 0$. Then we have:
\begin{equation*} 
-\theta_1 v_1 =\theta_0 v_0 +\theta_2 v_2 +\ldots
  +\theta_n v_n\in\scone{v_0,v_2,\ldots ,v_n}
\end{equation*} 
and $-\theta_1 > 0$, so a point on the ray from $\orig$ through $v_1$
lies in the cone over $v_0, v_2,\ldots,v_n$. This contradicts the
fact that $\orig$ lies in the strict interior of $P$.

Hence the $\theta_i$ are normalised barycentric co-ordinates for $P$,
and so by uniqueness we have $\theta_i =\lambda_i^\prime$ for all $i$.
Solving these equations for  $\mu_i^\prime$ yields:
\[
\mu_i^\prime=
\begin{cases}
\lambda_0^\prime -\frac{\abs{\hmin}}{\hmax}\sum_{i=2}^k\lambda_i^\prime&\text{ if }i=0\\
\lambda_1^\prime +\sum_{i=2}^k\lambda_i^\prime&\text{ if }i=1\\
\frac{\abs{\hmin}}{\hmax}\lambda_i^\prime&\text{ if }i\in\{ 2,\ldots, k\}\\
\lambda_i^\prime&\text{ if }i\in\{ k+1,\ldots, n\}.
\end{cases}
\]
Applying $w$ to both sides of the equation
$\sum_{i=1}^n\lambda_iv_i=\orig$, we find that
\begin{equation} 
  \label{hweqn}
  \frac{\abs{\hmin}}{\hmax} =\frac{\lambda_0}{\lambda_1 +\ldots +\lambda_k}
\end{equation}
and thus:
\[
\mu_i^\prime=
\begin{cases}
\lambda_0^\prime -\frac{\lambda_0}{\lambda_1 +\ldots +\lambda_k}\sum_{i=2}^k\lambda_i^\prime&\text{ if }i=0\\
\lambda_1^\prime +\sum_{i=2}^k\lambda_i^\prime&\text{ if }i=1\\
\frac{\lambda_0}{\lambda_1 +\ldots +\lambda_k}\lambda_i^\prime&\text{ if }i\in\{ 2,\ldots, k\}\\
\lambda_i^\prime&\text{ if }i\in\{ k+1,\ldots, n\}.
\end{cases}
\]
Now we can form integer weights by defining $\mu_i = h(\lambda_1
+\ldots +\lambda_k)\mu_i^\prime$; this gives:
\[
\mu_i=
\begin{cases}
\lambda_0\lambda_1&\text{ if }i=0\\
(\lambda_1 +\ldots +\lambda_k)^2&\text{ if }i=1\\
\lambda_0\lambda_i&\text{ if }i\in\{2,\ldots,k\}\\
\lambda_i (\lambda_1 +\ldots +\lambda_k)&\text{ if }i\in\{ k+1 ,\ldots , n\}.
\end{cases}
\]
These weights are integers, but they may not be well-formed. However since $Q$ is Fano, we know that the weights are well-formed if and only if they are reduced. Therefore it remains to divide through by their greatest common divisor, which we denote by $d$. Thus the weights of $Q$ are:
\begin{equation*}
\frac{1}{d}\left(\lambda_0\lambda_1,(\lambda_1+\ldots+\lambda_k)^2,\lambda_0\lambda_2,\ldots,\lambda_0\lambda_k,\lambda_{k+1}(\lambda_1+\ldots+\lambda_k),\ldots,\lambda_n(\lambda_1+\ldots+\lambda_k)\right).
\end{equation*}

Consider now the degrees of $X$ and $Y$: 
\begin{align*}
  &
  (-K_X)^n=
  \frac{(\lambda_0+\ldots+\lambda_n)^n}{\lambda_0\ldots\lambda_n \, \mult{X}} \\
  & (-K_Y)^n=
\frac{\frac{1}{d^n}\left(\lambda_0\lambda_1+\ldots+\lambda_n(\lambda_1+\ldots+\lambda_k)\right)^n}{\frac{1}{d^{n+1}}\left((\lambda_0\lambda_1)\ldots\lambda_n(\lambda_1+\ldots+\lambda_k) \right)\,\mult{Y}}
\end{align*}
Since degree is preserved by mutation, we conclude that
\begin{equation}\label{dmult}
d\cdot\frac{\mult{X}}{\mult{Y}}=\frac{\lambda_0^{k-1}}{(\lambda_1+\ldots+\lambda_k)^{k-2}}
\end{equation}
as claimed.
\end{proof}

\begin{remark} \label{rem:multiplicity_preserving}
  When $\mult{X}=\mult{Y}$, and in particular if $X$ and $Y$ are weighted projective spaces, we have:
  \begin{equation*}
    d=\frac{\lambda_0^{k-1}}{(\lambda_1 +\ldots +\lambda_k)^{k-2}}
  \end{equation*}
  This gives an explicit expression for the weights after mutation in
  terms of the weights before mutation.
\end{remark}

\begin{remark}
  In the case of a mutation over a facet we see that the new weights
  are: 
  \begin{equation*} \frac{1}{d}\left(\lambda_0\lambda_1,(\lambda_1+\ldots+\lambda_k)^2,\lambda_0\lambda_2,\ldots,\lambda_0\lambda_n\right)
  \end{equation*}
  Note that $\lambda_0$ divides $d$, because
  $\lambda_0\divides\lambda_0\lambda_i$ for $i = 1,\ldots, n$ and the
  weights are well-formed. On the other hand, after dividing
  through by $\lambda_0$ we obtain well-formed weights, and so in fact
  $d=\lambda_0$. In this case, therefore, we obtain an explicit formula for how the multiplicity changes:
  \begin{equation*}
    \frac{\mult{X}}{\mult{Y}}=\left(\frac{\lambda_0}{\lambda_1+\ldots+\lambda_n}\right)^{n-2}
  \end{equation*}
\end{remark}

\section{Multiplicity-preserving Mutations}\label{sec:mult_preserving}

The following result places a strong restriction on which mutations of
fake weighted projective spaces can preserve multiplicity.

\begin{thm}\label{onlyedges}
  Any non-trivial multiplicity-preserving mutation between fake
  weighted projective spaces $X$ and $Y$ is a mutation over an edge.
\end{thm}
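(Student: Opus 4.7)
The plan is to combine the arithmetic formula from Theorem~\ref{mutationformula} with the well-formedness of the weights to rule out multiplicity-preserving mutations over faces of dimension at least two. Relabel as in Theorem~\ref{mutationformula}, set $s:=\lambda_1+\cdots+\lambda_k$, and suppose for a contradiction that $k\ge 3$. Multiplicity preservation turns Theorem~\ref{mutationformula} into the arithmetic identity $d=\lambda_0^{k-1}/s^{k-2}$.

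First I would establish $\lambda_0\mid s$. Since $d$ is a positive integer dividing the unreduced new weight $\mu_1=s^2$, one obtains both $s^{k-2}\mid\lambda_0^{k-1}$ and $\lambda_0^{k-1}\mid s^k$. Moreover $d\mid\lambda_0\gcd{\lambda_1,\ldots,\lambda_k}$ and $d\mid s\gcd{\lambda_{k+1},\ldots,\lambda_n}$, both following from $d\mid\mu_i$ for the appropriate ranges of $i$. Well-formedness of $X$'s weights gives $\gcd{\lambda_1,\ldots,\lambda_n}=1$, so for every prime $p$ at least one of $\gcd{\lambda_1,\ldots,\lambda_k}$ and $\gcd{\lambda_{k+1},\ldots,\lambda_n}$ is coprime to $p$. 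A short prime-by-prime analysis then yields $v_p(\lambda_0)\le v_p(s)$ for every $p$, and so $\lambda_0\mid s$.

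Next, write $s=\lambda_0 q$. The relation $s^{k-2}\mid\lambda_0^{k-1}$ becomes $q^{k-2}\mid\lambda_0$, so $\lambda_0=q^{k-2}r$, $s=q^{k-1}r$, and $d=r$. Substituting into the formula of Theorem~\ref{mutationformula}, every reduced new weight $\mu_i/d$ turns out to be divisible by $q^{k-2}$. Since $k\ge 3$, reducedness (a consequence of well-formedness of $Y$'s weights) forces $q^{k-2}=1$, hence $q=1$ and $s=\lambda_0$.

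Finally, with $s=\lambda_0$, Equation~\eqref{hweqn} gives $\hmax=\abs{\hmin}$, and the weights of $Y$ are the transposition $(\lambda_1,\lambda_0,\lambda_2,\ldots,\lambda_n)$ of those of $X$. The $\Q$-linear reflection
\[
\tau\colon x\mapsto x-\frac{w(x)}{\hmax}(v_0-v_1)
\]
is an involution satisfying $\tau(v_0)=v_1$, $\tau(v_1)=v_0$, $\tau(v_i)=v_0+v_i-v_1$ for $2\le i\le k$, and $\tau(v_i)=v_i$ for $i>k$; in particular $\tau$ sends the vertex set of $P$ bijectively onto that of $Q$. A verification using the lattice divisibility $(v_i-v_1)/\hmax\in N$ for $2\le i\le k$ guaranteed by Lemma~\ref{iffmut}, together with $s=\lambda_0$, shows $\tau\in\GL_n(\Z)$. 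Hence $P\cong Q$, contradicting non-triviality, and we conclude $k=2$. The hardest step is this last integrality check for $\tau$ when $\mult{X}>1$; for $\mult{X}=1$ one has $\hmax=1$ and the verification is immediate.
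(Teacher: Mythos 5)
Your arithmetic reduction to $\lambda_1+\cdots+\lambda_k=\lambda_0$ is correct and is essentially the paper's argument: where you run a prime-by-prime comparison using $d\divides\lambda_0\gcd{\lambda_1,\ldots,\lambda_k}$, $d\divides s\gcd{\lambda_{k+1},\ldots,\lambda_n}$ and well-formedness, the paper writes $s/\lambda_0=A/B$ in lowest terms, shows $B\divides\lambda_i$ for all $i\geq 1$ (hence $B=1$), and then extracts a $(k-2)$th root to get $s\divides\lambda_0$. Both routes are fine and both yield that the weights of $Y$ are those of $X$ with $\lambda_0$ and $\lambda_1$ exchanged.

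The gap is in the last step, exactly where you locate the difficulty. For $\tau\colon x\mapsto x-\frac{w(x)}{\hmax}(v_0-v_1)$ to lie in $\GL_n(\Z)$ one needs $\frac{1}{\hmax}(v_0-v_1)\in N$: apply $\tau$ to any $x$ with $w(x)=1$, which exists because $w$ is primitive. The divisibilities supplied by Lemma~\ref{iffmut} concern only $v_i-v_1$ for $2\leq i\leq k$, and neither they nor $s=\lambda_0$ control $v_0-v_1$. When $\mult{X}=1$ the step does close, because $w$ takes values in $\hmax\Z$ on the vertex lattice $N'$ and $w(N)=\Z$, so $\hmax\divides\mult{X}$ forces $\hmax=1$. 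But for $\mult{X}>1$ the claimed verification cannot be carried out from the stated hypotheses: for instance $P=\sconv{(1,0,3),(2,1,-3),(2,-2,-3),(-7,1,-3)}$ has weights $(3,1,1,1)$, admits the facet mutation with $w=(0,0,1)$, and satisfies $s=\lambda_0=3$ and $\hmax=3$, yet $(v_0-v_1)/3=(-1/3,-1/3,2)\notin N$, so $\tau\notin\GL_3(\Z)$. (The paper's own proof concludes at this point by identifying ``unchanged weights'' with ``trivial mutation'', so it does not supply the missing argument either.) To complete your proof you must either establish triviality by some other means in the presence of nontrivial multiplicity, or restrict the final step to the case $\mult{X}=1$, which is all that is needed for Corollary~\ref{cor:wps_weight_divisibility} and the applications in \S\S\ref{sec:canonical_sings}--\ref{sec:terminal_sings}.
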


\begin{proof}
  Let $P = \sconv{v_0,v_1,\ldots,v_n}$ be the simplex associated to
  $X$, and let $\lambda_0,\lambda_1,\ldots,\lambda_n$ be the
  corresponding weights. Let $Q =
  \sconv{v_0^\prime,v_1^\prime,\ldots,v_n^\prime}$ be the simplex
  associated to $Y$, and let
  $\lambda^\prime_0,\lambda^\prime_1,\ldots,\lambda^\prime_n$ be the
  corresponding weights. Suppose for a contradiction that $P$ and $Q$
  are related by a non-trivial mutation over a $k$-face, for some
  $k>2$. By Remark~\ref{rem:multiplicity_preserving}, after
  reordering weights if necessary, we have:
  \begin{align*} (\mu_0,\ldots & ,\mu_n) =\\
    &\frac{1}{d}\left((\lambda_1+\ldots+\lambda_k)^2,\lambda_0\lambda_1,\lambda_0\lambda_2,\ldots,\lambda_0\lambda_k,\lambda_{k+1}(\lambda_1+\ldots+\lambda_k),\ldots,\lambda_n(\lambda_1+\ldots+\lambda_k)\right)\end{align*}
  where:
  \begin{equation*}
    d=\frac{\lambda_0^{k-1}}{(\lambda_1 +\ldots +\lambda_k)^{k-2}}
  \end{equation*}
  We recall from \eqref{hweqn} that
  \begin{equation*}\frac{\hmax}{\abs{\hmin}}=\frac{\lambda_1+\ldots+\lambda_k}{\lambda_0}\end{equation*}
  and write $\hmax/\abs{\hmin} = A/B$ with $A$ and $B$ coprime
  integers. So, for $i\in\{ 1,\ldots, k\}$, we have:
  \begin{equation*}
    \mu_i=\frac{\lambda_0\lambda_i}{d}=\lambda_i\left(\frac{\lambda_1+\ldots+\lambda_k}{\lambda_0}\right)^{k-2}=\lambda_i\left(\frac{A}{B}\right)^{k-2}
  \end{equation*}
  Since $A$ and $B$ are coprime and $k > 2$, we have that
  $B\divides\lambda_i$. Similarly for $i\in\{k+1,\ldots,n\}$ we have:
  \begin{equation*}
    \mu_i=\frac{\lambda_i(\lambda_1+\ldots+\lambda_k)}{d}=\lambda_i\left(\frac{\lambda_1+\ldots+\lambda_k}{\lambda_0}\right)^{k-1}=\lambda_i\left(\frac{A}{B}\right)^{k-1}
  \end{equation*}
  and so in this case too $B\divides\lambda_i$. However
  $\lambda_1,\lambda_2,\ldots,\lambda_n$ are coprime, because the
  weights of $X$ are well-formed, and therefore $B=1$.

  Since $\hmax/\abs{\hmin} = A/B = A$ is an integer, we have that
  $\lambda_0\divides\lambda_1 +\ldots +\lambda_k$. It follows that
  $\lambda_0\divides d\mu_i$ for all $i$, and since the weights
  $\mu_i$ are reduced we conclude that $\lambda_0\divides d$. Then
  \begin{equation*}
    \frac{d}{\lambda_0}=\left(\frac{\lambda_0}{\lambda_1 +\ldots +\lambda_k}\right)^{k-2}
  \end{equation*}
  is an integer. Taking the $(k-2)$th root (recall that $k>2$) we see
  that $\frac{\lambda_0}{\lambda_1 +\ldots +\lambda_k}$ is an integer,
  and hence that $\lambda_1 +\ldots +\lambda_k\divides\lambda_0$. Thus
  $\lambda_1 +\ldots +\lambda_k=\lambda_0$. Substituting this into
  our expression for the $\mu_i$ shows that the mutation is trivial,
  which is a contradiction.
\end{proof}

\begin{cor}
  \label{cor:wps_weight_divisibility}
  Suppose that $X$ is a weighted projective space that admits a
  non-trivial mutation to another weighted projective space. Let
  $\lambda_0$ be the weight corresponding to the vertex $w_\hmax(P)$,
  and let $\lambda_1$,~$\lambda_2$ be the weights corresponding to the
  vertices of the edge $w_\hmin(P)$. Then:
  \begin{enumerate}
  \item\label{item:wps_weight_divisibility_1} $\lambda_0\divides(\lambda_1+\lambda_2)^2$
  \item\label{item:wps_weight_divisibility_2} $\gcd{\lambda_1,\lambda_2}\divides\lambda_0$
  \end{enumerate}
\end{cor}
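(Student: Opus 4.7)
The plan is to apply Theorem~\ref{onlyedges} to reduce to the case of an edge mutation (so $k=2$ in the notation of Theorem~\ref{mutationformula}) and then read off both divisibilities directly from the explicit weight formula.

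Since both $X$ and $Y$ are weighted projective spaces, $\mult{X}=\mult{Y}=1$ and the mutation is multiplicity-preserving; Theorem~\ref{onlyedges} then forces it to be a mutation over an edge, i.e.~$w_\hmin(P)=\sconv{v_1,v_2}$. In this situation Remark~\ref{rem:multiplicity_preserving} gives
\[
d=\frac{\lambda_0^{k-1}}{(\lambda_1+\cdots+\lambda_k)^{k-2}}=\lambda_0,
\]
so the weights of $Y$, in the ordering coming from Theorem~\ref{mutationformula}, are
\[
\left(\lambda_1,\ \frac{(\lambda_1+\lambda_2)^2}{\lambda_0},\ \lambda_2,\ \frac{\lambda_3(\lambda_1+\lambda_2)}{\lambda_0},\ \ldots,\ \frac{\lambda_n(\lambda_1+\lambda_2)}{\lambda_0}\right).
\]
These entries are positive integers, and since $Y$ is a weighted projective space they are reduced (coprime). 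Part~\eqref{item:wps_weight_divisibility_1} is then immediate: the second entry is an integer, so $\lambda_0\divides(\lambda_1+\lambda_2)^2$.

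For part~\eqref{item:wps_weight_divisibility_2} I would argue by contradiction using $p$-adic valuations. Suppose some prime $p$ divides $\gcd{\lambda_1,\lambda_2}$ but $p\nmid\lambda_0$. Then $v_p(\lambda_1+\lambda_2)\geq 1$, so $v_p\bigl((\lambda_1+\lambda_2)^2/\lambda_0\bigr)\geq 2$, i.e.~$p$ divides the second weight. For each $i\geq 3$ one has $v_p\bigl(\lambda_i(\lambda_1+\lambda_2)/\lambda_0\bigr)\geq v_p(\lambda_1+\lambda_2)\geq 1$, so $p$ divides all of those weights as well; finally $p\divides\lambda_1$ and $p\divides\lambda_2$ trivially. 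Thus $p$ divides every weight of $Y$, contradicting reducedness. Hence $\gcd{\lambda_1,\lambda_2}\divides\lambda_0$.

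There is no real obstacle here: the only delicate point is simply being careful about the labelling conventions of Theorem~\ref{mutationformula} (where the index $k$ counts the vertices of $w_\hmin(P)$, so an edge mutation corresponds to $k=2$ rather than $k=1$). Once the new weights are written explicitly both parts reduce to a single line of divisibility bookkeeping.
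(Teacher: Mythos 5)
Your reduction via Theorem~\ref{onlyedges} and Remark~\ref{rem:multiplicity_preserving} to an edge mutation with $k=2$ and $d=\lambda_0$ is exactly the paper's route, and your proof of part~\eqref{item:wps_weight_divisibility_1} --- the second reduced weight $(\lambda_1+\lambda_2)^2/\lambda_0$ must be an integer --- matches the paper's. The problem is in part~\eqref{item:wps_weight_divisibility_2}. You rule out the existence of a prime $p$ with $p\divides\gcd{\lambda_1,\lambda_2}$ and $p\notdivides\lambda_0$, and then conclude ``hence $\gcd{\lambda_1,\lambda_2}\divides\lambda_0$''. That final step is a non sequitur: what you have actually shown is that every prime dividing $\gcd{\lambda_1,\lambda_2}$ also divides $\lambda_0$, i.e.\ that the \emph{radical} of the gcd divides $\lambda_0$, which is strictly weaker (compare $g=4$, $\lambda_0=2$: every prime dividing $g$ divides $\lambda_0$, yet $g\notdivides\lambda_0$). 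Well-formedness does not force $\gcd{\lambda_1,\lambda_2}$ to be squarefree --- $(1,4,4,3)$ is well-formed --- so the statement you need really is the full divisibility, not just the statement about primes.

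The gap is easily closed in either of two ways. Your valuation argument goes through verbatim if you instead suppose $v_p(\gcd{\lambda_1,\lambda_2})>v_p(\lambda_0)$ for some prime $p$: then $v_p(\lambda_1)$, $v_p(\lambda_2)$ and $v_p(\lambda_1+\lambda_2)$ are all at least $v_p(\lambda_0)+1\geq 1$, so $v_p\bigl((\lambda_1+\lambda_2)^2/\lambda_0\bigr)\geq v_p(\lambda_0)+2$ and $v_p\bigl(\lambda_i(\lambda_1+\lambda_2)/\lambda_0\bigr)\geq 1$, and $p$ again divides every weight of $Y$, contradicting reducedness. Alternatively --- and this is what the paper does --- observe directly that $g:=\gcd{\lambda_1,\lambda_2}$ divides each of the unreduced weights $\lambda_0\lambda_1$, $(\lambda_1+\lambda_2)^2$, $\lambda_0\lambda_2$ and $\lambda_i(\lambda_1+\lambda_2)$, hence divides their greatest common divisor $d$, which equals $\lambda_0$; no contradiction argument is needed.
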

\begin{proof}
  The mutation is an edge mutation, and so
  Theorem~\ref{mutationformula} implies both that $d=\lambda_0$
  and that $d \divides (\lambda_1+\lambda_2)^2$. This proves~\eqref{item:wps_weight_divisibility_1}.
  Looking again at Theorem~\ref{mutationformula} and using
  well-formedness of weights, we see that $\gcd{\lambda_1,\lambda_2}
  \divides d$. This proves~\eqref{item:wps_weight_divisibility_2}.
\end{proof}

\begin{remark}
  Let $X$ be a fake weighted projective plane. Akhtar and Kasprzyk
  characterise mutations from $X$ to other fake weighted projective
  planes in terms of solutions to an associated Diophantine
  equation~\cite[Proposition~3.12]{AK13}. Their argument relies on
  the fact that, for lattice triangles, the square-free parts of the
  weights are preserved (up to reordering) under mutation. This
  phenomenon does not persist in higher dimensions:
  \begin{enumerate}
  \item Example~\ref{exa:P1113P1146} above shows that, in general,
    neither the square-free parts of the weights nor the
    $k$th-power-free parts of the weights nor the square parts of the
    weights nor the $k$th-power parts of the weights are preserved.
  \item Example~\ref{exa:P1148P1114} above shows that, in general,
    neither the $n$th-power-free parts nor the $n$th-power parts of
    the weights are preserved.
  \end{enumerate}
\end{remark}

\section{Canonical and Terminal Singularities}\label{sec:canonical_and_terminal_sings}

Terminal and canonical singularities were introduced by Miles Reid; they play a fundamental role in birational geometry~\cite{Rei80,Rei83,Rei87}. Terminal singularities form the smallest class of singularities that must be allowed if one wishes to construct minimal models in dimensions three or more. Canonical singularities can be regarded as the limit of terminal singularities; they arise naturally as the singularities occurring on canonical models of varieties of general type. From the toric viewpoint, terminal and canonical singularities have a particularly elegant combinatorial description. A toric singularity corresponds to a strictly convex rational polyhedral cone $\sigma\subset\NQ$~\cite{Dan78}. The cone $\sigma$ is \emph{terminal} if and only if:
\begin{enumerate}
\item\label{item:Q_Gorenstein}
the lattice points $\rho_1,\ldots,\rho_m$ corresponding to the primitive generators of the rays of $\sigma$ are contained in an affine hyperplane $H_u:=\{v\in\NQ\mid u(v)=1\}$ for some $u\in M_\Q$;
\item\label{item:terminal_hyper}
with the exception of the origin $\orig$ and the generators $\rho_i$ of the rays, no other lattice points of $N$ are contained in the part of $\sigma$ on or under $H_u$, i.e.
$$N\cap\sigma\cap\{v\in\NQ\mid u(v)\leq 1\}=\{\orig,\rho_1,\ldots,\rho_m\}$$
\end{enumerate}
The cone $\sigma$ is \emph{canonical} if and only if~\eqref{item:Q_Gorenstein} holds and
\begin{enumerate}
\item[(ii$'$)]\label{item:canonical_hyper}
the origin $\orig$ is the only lattice point contained in the part of $\sigma$ under $H_u$, i.e.
$$N\cap\sigma\cap\{v\in\NQ\mid u(v)<1\}=\{\orig\}$$
\end{enumerate}
When the hyperplane $H_u$ in condition~\eqref{item:Q_Gorenstein} corresponds to a lattice point $u\in M$, the singularity is called \emph{Gorenstein}.

\section{Weighted Projective Spaces With Canonical Singularities}\label{sec:canonical_sings}

We now apply our results to the study of weighted projective spaces with canonical singularities, giving some geometric context, in terms of mutations, for a recent combinatorial result that characterises the fake weighted projective spaces of maximal degree with at worst canonical singularities~\cite{AKN13}. We say that a (fake) weighted projective space with at worst canonical singularities is a \emph{canonical} (fake) weighted projective space. 

\begin{remark}
  If $\Proj(\lambda_0,\ldots,\lambda_n)/G$ is a canonical fake weighted projective space then $\Proj(\lambda_0,\ldots,\lambda_n)$ has canonical singularities too. Thus a canonical fake weighted projective space of maximal degree is necessarily a weighted projective space.
\end{remark}

\begin{definition} \label{def:Sylvester}
  The \emph{Sylvester numbers} $y_0,y_1,y_2,\ldots$ are defined by:
  \[
  y_n = 
  \begin{cases}
    2&\text{ if }n=0\\
    1 +\prod_{i=0}^{n-1} y_i&\text{ otherwise.}
  \end{cases}
  \]
  We set $t_n := y_n - 1$.
\end{definition}

\begin{lemma}[\cite{Syl80}] \ 
  \label{lem:sylv} 
  \begin{enumerate}
  \item\label{item:sylv_1} The Sylvester numbers are pairwise coprime;
  \item\label{item:sylv_2} If $n\geq 1$ and $i< n$ then $t_n/y_i$ is an integer. Furthermore if $p$ is a prime dividing $y_i$ then $p\divides t_n/y_j$ for $j< n$,~$j\neq i$, and $p\notdivides t_n/y_i$;
  \item\label{item:sylv_3} We have:
    \[
    \sum_{i=0}^{n-1}\frac{1}{y_i} =\frac{t_n-1}{t_n}.
    \]
  \end{enumerate}
\end{lemma}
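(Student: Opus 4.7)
The plan is to establish all three parts using the single key identity
\[
t_n = y_n - 1 = \prod_{i=0}^{n-1} y_i,
\]
which is immediate from Definition~\ref{def:Sylvester} by induction (it holds for $n=1$ since $t_1 = y_0 = 2$, and for the inductive step one uses $y_n = 1+\prod_{i<n}y_i$). Once this identity is in hand, parts~\eqref{item:sylv_1} and~\eqref{item:sylv_2} are essentially formal.

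For~\eqref{item:sylv_1}, I would argue by contradiction. Suppose a prime $p$ divides both $y_i$ and $y_j$ with $i<j$. Since $i<j$, the factor $y_i$ appears in $\prod_{k=0}^{j-1} y_k = t_j$, so $p\divides t_j$. But then $p$ divides $y_j - t_j = 1$, a contradiction.

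For~\eqref{item:sylv_2}, the identity $t_n = \prod_{i=0}^{n-1}y_i$ immediately gives that $t_n/y_i = \prod_{k<n,\,k\neq i}y_k$ is an integer whenever $i<n$. Now fix a prime $p$ with $p\divides y_i$. For $j<n$ with $j\neq i$, the product $t_n/y_j = \prod_{k<n,\,k\neq j}y_k$ still contains the factor $y_i$, so $p\divides t_n/y_j$. On the other hand $t_n/y_i$ is a product of the $y_k$ for $k\neq i$, and by~\eqref{item:sylv_1} none of these share a common factor with $y_i$; hence $p\notdivides t_n/y_i$.

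For~\eqref{item:sylv_3}, I would proceed by induction on $n$. The base case $n=1$ reads $1/y_0 = 1/2 = (t_1-1)/t_1$, which holds since $t_1 = y_0 = 2$. For the inductive step, assume the identity for $n$ and compute, using $t_{n+1} = t_n\cdot y_n = t_n(t_n+1)$ (since $y_n = t_n + 1$):
\[
\sum_{i=0}^{n}\frac{1}{y_i} = \frac{t_n-1}{t_n} + \frac{1}{t_n+1}
 = \frac{(t_n-1)(t_n+1) + t_n}{t_n(t_n+1)}
 = \frac{t_n^2 + t_n - 1}{t_{n+1}}
 = \frac{t_{n+1}-1}{t_{n+1}},
\]
which completes the induction. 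No real obstacle arises: everything reduces to the observation $t_n = \prod_{i<n} y_i$ and a one-line telescoping calculation for the sum.
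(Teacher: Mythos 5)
Your proof is correct; the paper itself gives no proof of this lemma, simply attributing it to Sylvester, and your argument via the identity $t_n=\prod_{i=0}^{n-1}y_i$ (which in fact needs no induction, being a direct restatement of the recurrence $y_n=1+\prod_{i<n}y_i$) together with the telescoping computation for part~\eqref{item:sylv_3} is the standard one. All three parts check out.
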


Define:
\[
X_n := \textstyle\Proj\left(1,1,\frac{2 t_{n-1}}{y_{n-2}},\ldots, \frac{2 t_{n-1}}{y_0}\right)
\]
$X_n$ is an $n$-dimensional canonical weighted projective space. Averkov, Kr\"umpelmann, and Nill~\cite{AKN13} have proved that, if $n \geq 4$, then $X_n$ is the unique $n$-dimensional canonical weighted projective space of maximum degree; this generalizes the corresponding result for Gorenstein weighted projective spaces, which is due to Nill~\cite{Nill07}. In dimension three there are precisely two canonical weighted projective spaces of maximum degree, and these are connected by a mutation: see Example~\ref{exa:P1113P1146}. We next determine all weighted projective spaces $\Xn{n}{m}{a}$ that might be connected to $X_n$ by a sequence of mutations through weighted projective spaces, and show that none of the $\Xn{n}{m}{a}$ are canonical.

\begin{prop}\label{canonicalwpsmutationgraph} 
  Define integers $\lambda_i^{(m,a)}$, where $0 \leq i \leq n$ and $0
  \leq a \leq n-2$, by:
  \begin{align*}
    \lambda_i^{(0,a)} & = 
    \begin{cases}
      1&\text{ if }i=0\\
      \frac{2t_{n-1}}{y_a}&\text{ if }i=1\\
      1&\text{ if }i=2\\
      \frac{2t_{n-1}}{y_{k_i}}&\text{ otherwise}
    \end{cases}
    \intertext{where $\{k_3,\ldots,k_n\}$ is $\{0,1,\ldots,\widehat{a},\ldots,n-2\}$ with $a$
      omitted, and:} 
    \lambda_i^{(m,a)} &=
    \begin{cases}
      \lambda_2^{(m-1,a)}&\text{ if }i=0\\
      \lambda_1^{(m-1,a)}&\text{ if }i=1\\
      \frac{\big(\lambda_1^{(m-1,a)}+\lambda_2^{(m-1,a)}\big)^2}{\lambda_0^{(m-1,a)}}&\text{ if }i=2\\
      \frac{\lambda_i^{(m-1,a)}\big(\lambda_1^{(m-1,a)}+\lambda_2^{(m-1,a)}\big)}{\lambda_0^{(m-1,a)}} 
      &\text{ otherwise}
    \end{cases}
  \end{align*}
  for $m \geq 1$. Set:
  \[
  \Xn{n}{m}{a} :=
  \Proj\left(\lambda_0^{(m,a)},\lambda_1^{(m,a)},\ldots,\lambda_n^{(m,a)}\right)
  \]
  Then:
  \begin{enumerate}
  \item\label{item:canonicalwpsmutationgraph_1} $\Xn{n}{0}{a} = X_n$ for all $a$;
  \item\label{item:canonicalwpsmutationgraph_2} if $m>0$ and if $X$ is a weighted projective space with a
    non-trivial mutation to $\Xn{n}{m}{a}$ then either $X =
    \Xn{n}{m-1}{a}$ or $X = \Xn{n}{m+1}{a}$;
  \item\label{item:canonicalwpsmutationgraph_3} $\Xn{n}{m}{a}$ is not canonical for any $m \geq 1$.
  \end{enumerate}
  In other words, the graph of mutations between weighted projective
  spaces starting at $X_n$ is a subtree of the following graph:

\begin{figure}[htbp]
  \centering
  \begin{tikzpicture}[grow'=down,sibling distance=50pt,label/.style={%
      postaction={ decorate,transform shape,
        decoration={ markings, mark=at position .5 with\node #1;}}}]]
    \Tree 
    [.$X_n$ 
	[.$\Xn{n}{1}{n-2}$ 
		[.$\Xn{n}{2}{n-2}$
			 [.$\Xn{n}{3}{n-2}$ {\vdots}
			]
		]
	]
        [.$\cdots$\vphantom{$\Xn{n}{1}{n-2}$}
		[.$\cdots$\vphantom{$\Xn{n}{1}{n-2}$}
			 [.$\cdots$\vphantom{$\Xn{n}{1}{n-2}$} {\vdots}
			]
		]
	]
	[.$\Xn{n}{1}{1}$ 
		[.$\Xn{n}{2}{1}$
			 [.$\Xn{n}{3}{1}$ {\vdots}
			]
		]
	]
	[.$\Xn{n}{1}{0}$ 
		[.$\Xn{n}{2}{0}$
			 [.$\Xn{n}{3}{0}$ {\vdots}
			]
		]
	]
]
\end{tikzpicture}  
  \label{fig:canonical_graph}
\end{figure}

\noindent and the only canonical weighted projective space in this
graph is $X_n$.
\end{prop}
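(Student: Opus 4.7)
The plan is to prove the three claims in turn. Part~\eqref{item:canonicalwpsmutationgraph_1} is a direct comparison of weight multisets, part~\eqref{item:canonicalwpsmutationgraph_2} splits into ``existence'' and ``uniqueness'' halves, and part~\eqref{item:canonicalwpsmutationgraph_3} follows from the Averkov--Kr\"umpelmann--Nill maximality theorem together with a weight comparison.

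\textbf{Part (i).} Unwinding the definitions, the multiset $\{\lambda_i^{(0,a)} : 0 \le i \le n\}$ equals $\{1,\,1\}\cup\{2t_{n-1}/y_j : 0 \le j \le n-2\}$, independently of~$a$, which is precisely the weight multiset of~$X_n$. Since a weighted projective space is determined by its multiset of weights, $\Xn{n}{0}{a} = X_n$.

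\textbf{Part (ii).} First I would observe that the recursion defining $\lambda_i^{(m,a)}$ in terms of $\lambda_i^{(m-1,a)}$ is (up to relabelling of the output) exactly the edge-mutation formula of Theorem~\ref{mutationformula} with $k=2$ and $d=\lambda_0^{(m-1,a)}$ (see Remark~\ref{rem:multiplicity_preserving}), applied with maximum-height vertex carrying weight $\lambda_0^{(m-1,a)}$ and minimum-height edge carrying weights $\lambda_1^{(m-1,a)},\lambda_2^{(m-1,a)}$. To realise this geometrically, I would start with the standard simplex representing $X_n = \Xn{n}{0}{a}$, with vertices ordered so that $\lambda_1^{(0,a)},\lambda_2^{(0,a)}$ label the chosen edge and $\lambda_0^{(0,a)}$ labels the chosen apex, and then inductively define the simplex $P^{(m,a)}$ by iterated edge mutation. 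By Theorem~\ref{mutationformula}, the weights of $P^{(m,a)}$ are exactly the $\lambda_i^{(m,a)}$; in particular, integrality and well-formedness are automatic because these are the weights of an honest Fano simplex. This construction produces the mutations $\Xn{n}{m-1}{a}\leftrightarrow\Xn{n}{m}{a}$ and $\Xn{n}{m}{a}\leftrightarrow\Xn{n}{m+1}{a}$.

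For uniqueness, suppose $X$ is a weighted projective space admitting a non-trivial mutation to $\Xn{n}{m}{a}$. Since both spaces have multiplicity~$1$, Theorem~\ref{onlyedges} forces this mutation to be over an edge. Corollary~\ref{cor:wps_weight_divisibility} then imposes the constraints $\lambda \mid (\mu+\nu)^2$ and $\gcd\{\mu,\nu\} \mid \lambda$ on any triple consisting of a candidate max-vertex weight $\lambda$ and min-edge weights $\{\mu,\nu\}$ drawn from the weight multiset of $\Xn{n}{m}{a}$. I would then carry out a case analysis on these triples, using the explicit formula for $\lambda_i^{(m,a)}$ and the coprimality structure of Sylvester numbers from Lemma~\ref{lem:sylv}, to show that the only admissible triples are the two giving rise to the forward and backward mutations already constructed. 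This case analysis is the main obstacle: one has to organise the argument inductively (propagating the divisibility and coprimality of the weights from $\Xn{n}{m-1}{a}$ to $\Xn{n}{m}{a}$) because the weights grow rapidly with~$m$ and the arithmetic bookkeeping becomes unwieldy if handled naively.

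\textbf{Part (iii).} By the Averkov--Kr\"umpelmann--Nill theorem~\cite{AKN13}, for $n \geq 4$ the space $X_n$ is the unique $n$-dimensional canonical weighted projective space of maximal degree (with the analogous small finite list handling $n=3$, cf.~Example~\ref{exa:P1113P1146}). Mutations preserve anticanonical degree, so $\Xn{n}{m}{a}$ has the same degree as $X_n$, namely the maximal degree. If $\Xn{n}{m}{a}$ were canonical, uniqueness would force $\Xn{n}{m}{a} \cong X_n$. I would rule this out by a straightforward induction on $m$ showing that $\lambda_2^{(m,a)}$ strictly exceeds $t_{n-1}$ (the largest weight of $X_n$) for all $m \geq 1$; this follows from the recursion $\lambda_2^{(m,a)} = \bigl(\lambda_1^{(m-1,a)}+\lambda_2^{(m-1,a)}\bigr)^2/\lambda_0^{(m-1,a)}$ and the initial values. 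The weight multisets therefore disagree, contradicting $\Xn{n}{m}{a} \cong X_n$.
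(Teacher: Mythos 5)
Your parts~(i) and~(ii) follow the same route as the paper, but your part~(iii) is genuinely different. The paper does not invoke the Averkov--Kr\"umpelmann--Nill maximality theorem here: it verifies non-canonicity directly from Kasprzyk's fractional-part criterion, exhibiting the explicit witness $\kappa^{(m,a)}=h^{(m,a)}-(\lambda_1^{(m-1,a)}+\lambda_2^{(m-1,a)})$ and proving $\sum_i\{\lambda_i^{(m,a)}\kappa^{(m,a)}/h^{(m,a)}\}>n-1$ by a lengthy appendix computation (Lemma~\ref{lem:not_canonical}). Your route --- same degree as $X_n$, AKN uniqueness, weights too large --- is much shorter and correct in substance for $n\ge 4$ (the standing hypothesis), with one repair needed: you deduce the degree equality from ``mutations preserve degree'', but the existence of the intermediate mutations is exactly what is not established (see below), so you should instead check directly from the recursion that $h^n/\prod_i\lambda_i^{(m,a)}$ is invariant (the new weight sum is $(\lambda_1+\lambda_2)h/\lambda_0$ and the new product is $\big((\lambda_1+\lambda_2)/\lambda_0\big)^n\prod_i\lambda_i$, so this is a one-line verification). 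The trade-off is that your proof of~(iii) imports the full strength of AKN, whereas the paper's is self-contained given Kasprzyk's criterion.

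The substantive gap is in~(ii). On the ``existence'' half: iterated edge mutation is not automatic from the weights, because condition~\eqref{item:iffmut_2} of Lemma~\ref{iffmut} --- that $\hmin$ divides $v_2-v_1$ in the lattice --- depends on the embedding of the simplex and is nowhere verified; fortunately existence is not needed, since the proposition only claims the mutation graph is a \emph{subtree}. On the ``uniqueness'' half, which is where essentially all of the paper's work lies, you have described the argument rather than carried it out. Ruling out each of the roughly twenty candidate triples $(\mu_0,\mu_1,\mu_2)$ via Corollary~\ref{cor:wps_weight_divisibility} requires knowing, for \emph{arbitrary} $m$, which primes divide which weights $\lambda_i^{(m,a)}$ and which of their sums; the paper establishes this through a chain of inductive divisibility lemmas (Lemmas~\ref{claim1}--\ref{claim10} and~\ref{claim11}), and the cases $m=0$, $m=1$ and $m>1$ require separate treatment because some of these statements (e.g.\ Lemma~\ref{claim7}) only hold for $m\ge 2$. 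Your outline correctly identifies the inductive strategy and the right tools, but until those lemmas are stated and proved and the case table is actually closed, part~(ii) --- and hence the ``subtree'' conclusion --- is not proved.
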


\begin{proof} Statement~\eqref{item:canonicalwpsmutationgraph_1} is trivial. For~\eqref{item:canonicalwpsmutationgraph_3}, recall that a
  weighted projective space $\Proj(\mu_0,\ldots,\mu_n)$ is canonical
  if and only if
  \[
  \sum_{i=0}^{n}\left\{\frac{\mu_i\kappa}{h}\right\}\in\{1,\ldots,n-1\}
  \]
  for every $\kappa\in\{2,\ldots,h-2\}$, where $\{x\}$ denotes the
  fractional part of $x$ and $h$ is the sum of the
  $\mu_i$~\cite[Proposition~2.5]{Kas13}. This fails for $m \geq 1$ and
  $\kappa = h^{(m,a)}-\lambda_1^{(m,a)}-\lambda_2^{(m-1,a)}$, where $h^{(m,a)}$ is
  the sum of the weights of $\Xn{n}{m}{a}$: see Lemma~\ref{lem:not_canonical}.

  It remains to
  prove~\eqref{item:canonicalwpsmutationgraph_2}. Suppose first that
  there is a non-trivial mutation from $X_n$ to a weighted projective
  space $X$. By Theorem~\ref{onlyedges}, this must be an edge
  mutation. Let $P$ be the simplex associated to $X_n$, let $\mu_0$
  denote the weight associated to the vertex $w_\hmax(P)$, and let
  $\mu_1$,~$\mu_2$ be the weights associated to the vertices of
  $w_\hmin(P)$. We consider the possible values of
  $\mu_0$,~$\mu_1$,~$\mu_2$.

  \begin{case} $\mu_0\neq 1$. Then $\mu_0 = 2t_{n-1}/y_{a_0}$ for
    some $a_0\in\{ 0,\ldots, n-2\}$.
    \begin{subcase} $\mu_1 = 1$,~$\mu_2 = 1$. Since $n\geq 4$, there
      exist $a_1$,~$a_2 \in \{0,1,\ldots,n-2\}$ distinct such that
      $y_{a_1} \ne y_{a_0}$ and $y_{a_2} \ne y_{a_0}$. At least one of
      $y_{a_1}$ or $y_{a_2}$ is not equal to $2$, and hence is a divisor
      of $\mu_0$ not equal to 2 or 4. But $(\mu_1+\mu_2)^2=4$, and this
      contradicts Corollary~\ref{cor:wps_weight_divisibility}\eqref{item:wps_weight_divisibility_1}.
    \end{subcase}

    \begin{subcase} $\mu_1\neq 1$,~$\mu_2 = 1$. Then $\mu_1 =
      2t_{n-1}/y_{a_1}$ for some $a_1\neq a_0$. Choose $a_2$ not equal
      to $a_0$ or $a_1$, and let $p$ be a prime dividing
      $y_{a_2}$. Lemma~\ref{lem:sylv}\eqref{item:sylv_2} imples that $p$ divides both
      $2t_{n-1}/y_{a_0}$ and $2t_{n-1}/y_{a_1}$. So $p$ does not divide
      $(2t_{n-1}/y_{a_1} +1)^2$, and this contradicts
      Corollary~\ref{cor:wps_weight_divisibility}\eqref{item:wps_weight_divisibility_1}.
    \end{subcase}

    \begin{subcase} $\mu_1 \ne 1$,~$\mu_2\neq 1$. Then $\mu_1 =
      2t_{n-1}/y_{a_1}$ and $\mu_2 = 2t_{n-1}/y_{a_2}$ for
      $a_1$,~$a_2$ distinct and not equal to $a_0$. Suppose first that
      $y_{a_0}\neq 2$, and let $p$ be a prime dividing
      $y_{a_0}$. Lemma~\ref{lem:sylv}\eqref{item:sylv_2} implies that
      $p\divides\gcd{\mu_1,\mu_2}$ and $p\notdivides\mu_0$,
      contradicting Corollary~\ref{cor:wps_weight_divisibility}\eqref{item:wps_weight_divisibility_2}. On
      the other hand if $y_{a_0} = 2$ then the same argument with
      $p=2$ shows that $4\divides\gcd{\mu_1,\mu_2}$ and
      $4\notdivides\mu_0$, again contradicting
      Corollary~\ref{cor:wps_weight_divisibility}\eqref{item:wps_weight_divisibility_2}
    \end{subcase}
  \end{case}

  \begin{case} $\mu_0 = 1$.
    \begin{subcase} $\mu_1 \ne 1$, $\mu_2 = 1$. Then
      $\mu_1=2t_{n-1}/y_a$ for some $a\in\{ 0,\ldots, n-2\}$, and
      Theorem~\ref{mutationformula} implies that $X =
      \Xn{n}{1}{a}$.
    \end{subcase}

    \begin{subcase}$\mu_1 \ne 1$, $\mu_2 \ne 1$. Then
      $\mu_1=2t_{n-1}/y_{a_1}$,$\mu_2=2t_{n-1}/y_{a_2}$ for
      $a_1$,~$a_2$ distinct. Choose $a_0$ not equal to $a_1$ or
      $a_2$, and let $p$ be a prime dividing
      $y_{a_0}$. Lemma~\ref{lem:sylv}\eqref{item:sylv_2} implies that
      $p\divides\gcd{\mu_1,\mu_2}$ and $p\notdivides\mu_0$,
      contradicting Corollary~\ref{cor:wps_weight_divisibility}\eqref{item:wps_weight_divisibility_2}.
    \end{subcase}
  \end{case}
  \noindent This completes the proof in the case where $m=0$.

  Suppose now that $m>1$, and that there is a non-trivial mutation
  from $\Xn{n}{m}{a}$ to a weighted projective space $X$.
  Theorem~\ref{onlyedges} implies that this is an edge
  mutation. Let $P$ be the simplex corresponding to $\Xn{n}{m}{a}$,
  let $\mu_0$ denote the weight associated to the vertex $w_\hmax(P)$,
  and let $\mu_1$,~$\mu_2$ denote the weights associated to the
  vertices of $w_\hmin(P)$. To declutter the notation, write
  $\lambda_i$ for the weight $\lambda_i^{(m,a)}$ of $\Xn{n}{m}{a}$.
  We consider the possible values of $\mu_0$,~$\mu_1$,~$\mu_2$ in
  turn.
  
  \begin{examplecase}
    $\mu_0=\lambda_i$ with $i \ge
    3$,~$\mu_1=\lambda_1$,~$\mu_2=\lambda_2$. By Lemma~\ref{claim3}
    we have that $\mu_0$ and $\mu_2$ share a common factor that does
    not divide $\mu_1$, and so does not divide $\mu_1+\mu_2$. Thus
    $\mu_0\notdivides (\mu_1+\mu_2)$, contradicting
    Corollary~\ref{cor:wps_weight_divisibility}\eqref{item:wps_weight_divisibility_1}.
  \end{examplecase}
  
  The remaining cases are entirely analogous. We arrive at a
  contradiction in all but two cases, as summarized in
  Table~\ref{tab:summary_mgt1} below; here
  $\lambda_i$,~$\lambda_j$,~$\lambda_k$ denote distinct elements of
  $\{3,\ldots,n\}$. The case
  $\mu_0=\lambda_0$,~$\mu_1=\lambda_1$,~$\mu_2=\lambda_2$ yields
  $\Xn{n}{m+1}{a}$, and the case
  $\mu_0=\lambda_2$,~$\mu_1=\lambda_0$,~$\mu_2=\lambda_1$ yields
  $\Xn{n}{m-1}{a}$. This completes the proof in the case where $m>1$.

  \begin{table}[ht]
    \centering
    \small
    \begin{tabular}{cccllccccll} \toprule
      $\mu_0$ & $\mu_1$ & $\mu_2$ & \multicolumn{1}{c}{Contradicts} &
      \multicolumn{1}{c}{Using} & \qquad&
      $\mu_0$ & $\mu_1$ & $\mu_2$ & \multicolumn{1}{c}{Contradicts} &
      \multicolumn{1}{c}{Using} \\
      \cmidrule(r{0.3ex}){1-5} 
      \cmidrule(l{0.3ex}){7-11} 
      $\lambda_0$ & $\lambda_1$ & $\lambda_2$ & \multicolumn{1}{c}{--} & \multicolumn{1}{c}{--}&&
      $\lambda_2$ & $\lambda_1$ & $\lambda_i$ &Corollary~\ref{cor:wps_weight_divisibility}\eqref{item:wps_weight_divisibility_1} & Lemma~\ref{claim2}\\
      $\lambda_0$ & $\lambda_1$ & $\lambda_i$ &Corollary~\ref{cor:wps_weight_divisibility}\eqref{item:wps_weight_divisibility_1} & Lemma~\ref{claim9}&&
      $\lambda_2$ & $\lambda_i$ & $\lambda_j$ &Corollary~\ref{cor:wps_weight_divisibility}\eqref{item:wps_weight_divisibility_2} & Lemma~\ref{claim11}\\
      $\lambda_0$ & $\lambda_2$ & $\lambda_i$ &Corollary~\ref{cor:wps_weight_divisibility}\eqref{item:wps_weight_divisibility_1} & Lemma~\ref{claim9}&&
      $\lambda_i$ & $\lambda_0$ & $\lambda_1$ &Corollary~\ref{cor:wps_weight_divisibility}\eqref{item:wps_weight_divisibility_1} & Lemma~\ref{claim2}\\
      $\lambda_0$ & $\lambda_i$ & $\lambda_j$ &Corollary~\ref{cor:wps_weight_divisibility}\eqref{item:wps_weight_divisibility_2} & Lemma~\ref{claim11}&&
      $\lambda_i$ & $\lambda_0$ & $\lambda_2$ &Corollary~\ref{cor:wps_weight_divisibility}\eqref{item:wps_weight_divisibility_1} & Lemma~\ref{claim2}\\
      $\lambda_1$ & $\lambda_0$ & $\lambda_2$ &Corollary~\ref{cor:wps_weight_divisibility}\eqref{item:wps_weight_divisibility_1} & Lemma~\ref{claim10}&&
      $\lambda_i$ & $\lambda_0$ & $\lambda_j$ &Corollary~\ref{cor:wps_weight_divisibility}\eqref{item:wps_weight_divisibility_1} & Lemma~\ref{claim4}\\
      $\lambda_1$ & $\lambda_0$ & $\lambda_i$ &Corollary~\ref{cor:wps_weight_divisibility}\eqref{item:wps_weight_divisibility_1} & Lemma~\ref{claim8}&&
      $\lambda_i$ & $\lambda_1$ & $\lambda_2$ &Corollary~\ref{cor:wps_weight_divisibility}\eqref{item:wps_weight_divisibility_1} & Lemma~\ref{claim2}\\
      $\lambda_1$ & $\lambda_2$ & $\lambda_i$ &Corollary~\ref{cor:wps_weight_divisibility}\eqref{item:wps_weight_divisibility_1} & Lemma~\ref{claim8}&&
      $\lambda_i$ & $\lambda_1$ & $\lambda_j$ &Corollary~\ref{cor:wps_weight_divisibility}\eqref{item:wps_weight_divisibility_1} & Lemma~\ref{claim4}\\
      $\lambda_1$ & $\lambda_i$ & $\lambda_j$ &Corollary~\ref{cor:wps_weight_divisibility}\eqref{item:wps_weight_divisibility_1} & Lemma~\ref{claim8}&&
      $\lambda_i$ & $\lambda_2$ & $\lambda_j$ &Corollary~\ref{cor:wps_weight_divisibility}\eqref{item:wps_weight_divisibility_1} & Lemma~\ref{claim7}\\
      $\lambda_2$ & $\lambda_0$ & $\lambda_1$ & \multicolumn{1}{c}{--} & \multicolumn{1}{c}{--}&&
      $\lambda_i$ & $\lambda_j$ & $\lambda_k$ &Corollary~\ref{cor:wps_weight_divisibility}\eqref{item:wps_weight_divisibility_1} & Lemma~\ref{claim8}\\
      $\lambda_2$ & $\lambda_0$ & $\lambda_i$ &Corollary~\ref{cor:wps_weight_divisibility}\eqref{item:wps_weight_divisibility_1} & Lemma~\ref{claim2}\\
      \bottomrule
    \end{tabular}
    \vspace{0.5em}
    \caption{A summary of the argument in the case where $m>1$.}
    \label{tab:summary_mgt1}
  \end{table}
  
  Suppose now that $m=1$. We can argue exactly as for $m>1$, except
  in those cases where Lemma~\ref{claim7} is used. We consider these
  three cases separately. As before, write $\lambda_i$ for
  $\lambda_i^{(m,a)}$.

  \begin{case}
    \label{canonical_wps_case_3}
    $\mu_0 = \lambda_0$,~$\mu_1 = \lambda_1$,~$\mu_2 = \lambda_i$ with
    $i \geq 3$. Then $\mu_0 = 1$, $\mu_1 = 2t_{n-1}/y_a$, and $\mu_2
    = \frac{2t_{n-1}}{y_{a_i}}\left(\frac{2t_{n-1}}{y_a}+1\right)$.
    Choose $a_j$ not equal to $a$ or $a_i$, and let $p$ be a prime
    dividing $y_{a_j}$. Then $p$ divides $\mu_1$ and $\mu_2$ but does
    not divide $\mu_0$, contradicting
    Corollary~\ref{cor:wps_weight_divisibility}\eqref{item:wps_weight_divisibility_2}.
  \end{case}

  \begin{case}
    \label{canonical_wps_case_4}
    $\mu_0 =\lambda_0$,~$\mu_1 =\lambda_2$,~$\mu_2 =\lambda_i$ with $i
    \geq 3$. Then $\mu_0=1$, $\mu_1 = (2t_{n-1}/y_a+1)^2$, and $\mu_2
    =\frac{2t_{n-1}}{y_{a_i}}\left(\frac{2t_{n-1}}{y_a}+1\right)$.
    Let $p$ be a prime dividing $2t_{n-1}/y_a + 1$. Then $p$ divides
    $\mu_1$ and $\mu_2$ but does not divide $\mu_0$, contradicting
    Corollary~\ref{cor:wps_weight_divisibility}\eqref{item:wps_weight_divisibility_2}.
  \end{case}
  
  \begin{case}
    \label{canonical_wps_case_5}
    $\mu_0 =\lambda_i$ with $i \geq 3$,~$\mu_1=\lambda_2$,~$\mu_2
    =\lambda_j$ with $j \geq 3$. Then $\mu_0
    =\frac{2t_{n-1}}{y_{a_i}}\left(\frac{2t_{n-1}}{y_a}+1\right)$,
    $\mu_1 = (2t_{n-1}/y_a+1)^2$, and $\mu_2
    =\frac{2t_{n-1}}{y_{a_j}}\left(\frac{2t_{n-1}}{y_a}+1\right)$.
    Let $p$ be a prime dividing $y_a$. Then $p$ divides $\mu_0$ and
    $\mu_2$ (Lemma~\ref{lem:sylv}) but does not divide $\mu_1$
    (Lemma~\ref{sylv++}). This contradicts Corollary~\ref{cor:wps_weight_divisibility}\eqref{item:wps_weight_divisibility_1}.
  \end{case}
  \noindent This completes the proof in the case where $m=1$.
\end{proof}

\section{Weighted Projective Spaces With Terminal Singularities}\label{sec:terminal_sings}

A (fake) weighted projective space with at worst terminal singularities is called a \emph{terminal} (fake) weighted projective space. As before, a terminal fake weighted projective space of maximal degree is necessarily a weighted projective space. We now give a conjectural classification of terminal weighted projective spaces of maximal degree, and analyse them in terms of mutations.

In dimension~$3$, the unique terminal weighted projective space of
maximum degree is $\Proj^3$. In dimension~$4$, Kasprzyk has
shown~\cite[Lemma~3.5]{Kas13} that the unique terminal weighted
projective space of maximum degree is:
\begin{equation*}
  \Proj(1,1,6,14,21)=\textstyle\Proj\left(1,1,\frac{t_3}{y_2},\frac{t_3}{y_1},\frac{t_3}{y_0}\right)
\end{equation*}
The classification problem for terminal weighted projective spaces of
maximum degree in dimensions~$5$ and higher is open. The space
\[
X_n=\textstyle\Proj\left(1,1,\frac{t_{n-1}}{y_{n-2}},\ldots,\frac{t_{n-1}}{y_0}\right)
\]
is terminal~\cite[Lemma~3.7]{Kas13} and it seems reasonable to
conjecture that, for $n \geq 4$, $X_n$ is the unique terminal weighted
projective space of maximum degree. Note that the methods
of~\cite{AKN13} do not apply to this problem, as they are unable to
distinguish between canonical and terminal singularities (they are
insensitive to lattice points on the boundary of a simplex).

\begin{prop}\label{terminalwpsmutationgraph} 
  Define integers $\lambda_i^{(m,a)}$, where $0 \leq i \leq n$ and $0
  \leq a \leq n-2$, by:
  \begin{align*}
    \lambda_i^{(0,a)} & = 
    \begin{cases}
      1&\text{ if }i=0\\
      \frac{t_{n-1}}{y_a}&\text{ if }i=1\\
      1&\text{ if }i=2\\
      \frac{t_{n-1}}{y_{k_i}}&\text{ otherwise}
    \end{cases}
    \intertext{where $\{k_3,\ldots,k_n\}$ is $\{0,1,\ldots,\widehat{a},\ldots,n-2\}$ with $a$
      omitted, and:} 
    \lambda_i^{(m,a)} &=
    \begin{cases}
      \lambda_2^{(m-1,a)}&\text{ if }i=0\\
      \lambda_1^{(m-1,a)}&\text{ if }i=1\\
      \frac{\big(\lambda_1^{(m-1,a)}+\lambda_2^{(m-1,a)}\big)^2}{\lambda_0^{(m-1,a)}}&\text{ if }i=2\\
      \frac{\lambda_i^{(m-1,a)}\big(\lambda_1^{(m-1,a)}+\lambda_2^{(m-1,a)}\big)}{\lambda_0^{(m-1,a)}} 
      &\text{ otherwise}
    \end{cases}
  \end{align*}
  for $m \geq 1$. Suppose that $n \geq 5$. Set:
  \[
  \Xn{n}{m}{a} :=
  \Proj\left(\lambda_0^{(m,a)},\lambda_1^{(m,a)},\ldots,\lambda_n^{(m,a)}\right)
  \]
  Then:
  \begin{enumerate}
  \item\label{item:terminalwpsmutationgraph_1} $\Xn{n}{0}{a} = X_n$ for all $a$;
  \item\label{item:terminalwpsmutationgraph_2} if $m>0$ and if $X$ is a weighted projective space with a
    non-trivial mutation to $\Xn{n}{m}{a}$ then either $X =
    \Xn{n}{m-1}{a}$ or $X = \Xn{n}{m+1}{a}$;
  \item\label{item:terminalwpsmutationgraph_3} $\Xn{n}{m}{a}$ is not terminal for any $m \geq 1$.
  \end{enumerate}
  In other words, the graph of mutations between weighted projective
  spaces starting at $X_n$ is a subtree of the following graph:

\begin{figure}[htbp]
  \centering
  \begin{tikzpicture}[grow'=down,sibling distance=50pt,label/.style={%
      postaction={ decorate,transform shape,
        decoration={ markings, mark=at position .5 with\node #1;}}}]]
    \Tree 
    [.$X_n$ 
	[.$\Xn{n}{1}{n-2}$ 
		[.$\Xn{n}{2}{n-2}$
			 [.$\Xn{n}{3}{n-2}$ {\vdots}
			]
		]
	]
        [.$\cdots$\vphantom{$\Xn{n}{1}{n-2}$}
		[.$\cdots$\vphantom{$\Xn{n}{1}{n-2}$}
			 [.$\cdots$\vphantom{$\Xn{n}{1}{n-2}$} {\vdots}
			]
		]
	]
	[.$\Xn{n}{1}{1}$ 
		[.$\Xn{n}{2}{1}$
			 [.$\Xn{n}{3}{1}$ {\vdots}
			]
		]
	]
	[.$\Xn{n}{1}{0}$ 
		[.$\Xn{n}{2}{0}$
			 [.$\Xn{n}{3}{0}$ {\vdots}
			]
		]
	]
]
\end{tikzpicture}  
  \label{fig:terminal_graph}
\end{figure}

\noindent and the only terminal weighted projective space in this
graph is $X_n$.
\end{prop}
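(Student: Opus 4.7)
The plan is to follow the template of Proposition~\ref{canonicalwpsmutationgraph} extremely closely, since the construction is directly analogous, with $2t_{n-1}$ replaced by $t_{n-1}$ throughout and ``canonical'' replaced by ``terminal''. Statement~\eqref{item:terminalwpsmutationgraph_1} is immediate from the recursive definition.

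For statement~\eqref{item:terminalwpsmutationgraph_3} I would invoke the terminal analogue of the fractional-part criterion in \cite[Proposition~2.5]{Kas13}: $\Proj(\mu_0,\ldots,\mu_n)$ is terminal if and only if $\sum_{i=0}^{n}\{\mu_i\kappa/h\}\in\{2,\ldots,n-1\}$ for every $\kappa\in\{2,\ldots,h-2\}$, where $h=\sum_i\mu_i$. Mimicking the canonical argument, I would exhibit an explicit witness $\kappa$ (expected to be essentially $\kappa=h^{(m,a)}-\lambda_1^{(m,a)}-\lambda_2^{(m-1,a)}$, the same as in Lemma~\ref{lem:not_canonical}, modulo the factor~$2$) for which the sum equals $0$, $1$, or $\geq n$. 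This becomes an arithmetic identity involving Sylvester numbers, checked by the same telescoping as in the canonical case and simplified by Lemma~\ref{lem:sylv}\eqref{item:sylv_3}. Since non-canonical implies non-terminal, even a direct reuse of the canonical witness (rescaled) should produce the required failure; the only thing to verify is that the chosen $\kappa$ still lies in the admissible range $\{2,\ldots,h-2\}$.

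For statement~\eqref{item:terminalwpsmutationgraph_2}, I would repeat the case analysis verbatim. By Theorem~\ref{onlyedges} any non-trivial mutation between weighted projective spaces is an edge mutation, so the problem reduces to testing which triples $(\mu_0,\mu_1,\mu_2)$ from the weights of $\Xn{n}{m}{a}$ can play the roles of $w_\hmax$ and the two endpoints of $w_\hmin$. For $m=0$ the argument splits into Cases~1 and~2 (on whether $\mu_0=1$) exactly as before; for $m>1$ the argument follows the thirteen rows of Table~\ref{tab:summary_mgt1}, with the terminal analogues of Lemmas~\ref{claim2}--\ref{claim11} doing the arithmetic work. In every case a prime~$p$ extracted from some $y_{a_j}$ via Lemma~\ref{lem:sylv}\eqref{item:sylv_2} contradicts one of the two clauses of Corollary~\ref{cor:wps_weight_divisibility}. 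The two surviving cases, $(\mu_0,\mu_1,\mu_2)=(\lambda_0,\lambda_1,\lambda_2)$ and $(\lambda_2,\lambda_0,\lambda_1)$, produce $\Xn{n}{m+1}{a}$ and $\Xn{n}{m-1}{a}$ respectively by direct application of Theorem~\ref{mutationformula}, closing the induction. The case $m=1$ again requires the three separate arguments analogous to Cases~\ref{canonical_wps_case_3}--\ref{canonical_wps_case_5} of the canonical proof, replacing $2t_{n-1}/y_a+1$ by $t_{n-1}/y_a+1$ and using Lemma~\ref{sylv++}.

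The main obstacle is the loss of the universal factor~$2$ in the weights: in the canonical proof, several subcases (notably Subcase~1.1 with $\mu_1=\mu_2=1$ and the $y_{a_0}=2$ branch of Subcase~1.3) were handled by exploiting the prime~$2$ which was automatically present in every weight $2t_{n-1}/y_a$. Here the weights $t_{n-1}/y_a$ are odd precisely when $a=0$, so parity-sensitive steps must be replayed with an odd Sylvester prime as witness instead. This is exactly the point at which the strengthened hypothesis $n\geq 5$ is used: having $|\{0,1,\ldots,n-2\}|\geq 4$ guarantees that, for any two ``forbidden'' indices $a_0,a_1$, one can choose a third index $a_2$ whose Sylvester prime provides the required divisibility obstruction. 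With that minor arithmetic recalibration the entire canonical proof transfers without further structural change.
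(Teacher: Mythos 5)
Your treatment of statements~\eqref{item:terminalwpsmutationgraph_1} and~\eqref{item:terminalwpsmutationgraph_2} matches the paper: statement~\eqref{item:terminalwpsmutationgraph_2} is proved exactly as you describe, via Theorem~\ref{onlyedges}, Corollary~\ref{cor:wps_weight_divisibility}, the same table of triples $(\mu_0,\mu_1,\mu_2)$, and terminal analogues of the divisibility lemmas, with the parity lemma (Lemma~\ref{claim11}) replaced by an odd-Sylvester-prime version (Lemma~\ref{claim11term}); you also correctly locate the role of $n\geq 5$, although the count is off by one --- one must avoid \emph{three} indices $\{a,k_i,k_j\}$, not two, which is precisely why $\abs{\{0,\ldots,n-2\}}\geq 4$ is needed.

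Your plan for statement~\eqref{item:terminalwpsmutationgraph_3}, however, has a genuine gap. You propose to reuse the canonical witness $\kappa=h^{(m,a)}-\lambda_1^{(m,a)}-\lambda_2^{(m-1,a)}$ and rerun ``the same telescoping''. That telescoping rests on the identity $h^{(m,a)}=y_{k_i}\lambda_i^{(m,a)}$ (Lemma~\ref{canonlemma6}), which is what turns each fractional part $\bigl\{\kappa^{(m,a)}\lambda_i^{(m,a)}/h^{(m,a)}\bigr\}$ for $i\geq 3$ into $1-1/y_{k_i}$. For the terminal weights this identity fails already at $m=0$: here $h^{(0,a)}=2+t_{n-1}\sum_{i=0}^{n-2}1/y_i=t_{n-1}+1=y_{n-1}$, whereas $y_{k_i}\lambda_i^{(0,a)}=t_{n-1}$, and since $y_{n-1}\equiv 1 \pmod{y_{k_i}}$ the quotient $h^{(0,a)}/\lambda_i^{(0,a)}$ is not even an integer. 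So the computation does not transfer, and your fallback --- ``non-canonical implies non-terminal'' --- is unavailable because the spaces $\Xn{n}{m}{a}$ of this proposition have different weights from those of Proposition~\ref{canonicalwpsmutationgraph}, and nothing establishes that they are non-canonical. The paper sidesteps all of this by choosing the \emph{different} witness $\kappa^{(m,a)}=h^{(m,a)}-h^{(m-1,a)}$ (Lemma~\ref{lem:not_terminal}): since $\kappa^{(m,a)}/h^{(m,a)}=1-\lambda_0^{(m-1,a)}/(\lambda_1^{(m-1,a)}+\lambda_2^{(m-1,a)})$, the recursion formulas alone force $\bigl\{\kappa^{(m,a)}\lambda_i^{(m,a)}/h^{(m,a)}\bigr\}=0$ for $i=2$ and all $i\geq 3$, so the full sum is at most the two remaining terms and hence $<2$, violating the terminal criterion with no Sylvester arithmetic at all. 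You would need to adopt this (or some other working) witness to close part~\eqref{item:terminalwpsmutationgraph_3}.
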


\begin{proof}
  Statement~\eqref{item:terminalwpsmutationgraph_1} is trivial.
  For~\eqref{item:terminalwpsmutationgraph_3}, recall that a weighted
  projective space $\Proj(\mu_0,\ldots,\mu_n)$ is terminal if and only
  if
  \begin{equation}\label{terminaliff}
    \sum_{i=0}^{n}\left\{\frac{\mu_i\kappa}{h}\right\}\in\{2,\ldots,n-1\}
  \end{equation}
  for every $\kappa\in\{2,\ldots,h-2\}$, where $\{x\}$ denotes the
  fractional part of $x$ and $h$ is the sum of the
  $\mu_i$~\cite[Proposition~2.3]{Kas13}. This fails for $m \geq 1$
  and $\kappa = h^{(m,a)}-h^{(m-1,a)}$, where $h^{(m,a)}$ is the sum
  of the weights of $\Xn{n}{m}{a}$: see Lemma~\ref{lem:not_terminal}.

  It remains to prove~\eqref{item:terminalwpsmutationgraph_2}. This is
  analogous to the proof of
  Proposition~\ref{canonicalwpsmutationgraph}\eqref{item:terminalwpsmutationgraph_2}.
  The analogs of Lemmas~\ref{claim1}--\ref{claim7} and
  Lemmas~\ref{claim8},~\ref{claim10} hold for these weights
  $\lambda_i^{(m,a)}$ too, with almost identical proofs, and
  Lemma~\ref{claim11term} functions as a replacement for
  Lemma~\ref{claim11}. Suppose first that $m > 1$ and that there is a
  non-trivial mutation from $\Xn{n}{m}{a}$ to a weighted projective
  space $X$. By Theorem~\ref{onlyedges}, this must be an edge
  mutation. Let $P$ be the simplex associated to $\Xn{n}{m}{a}$, let
  $\mu_0$ denote the weight associated to the vertex $w_\hmax(P)$, and
  let $\mu_1$,~$\mu_2$ be the weights associated to the vertices of
  $w_\hmin(P)$. To declutter the notation, we again write $\lambda_i$
  for the weight $\lambda_i^{(m,a)}$ of $\Xn{n}{m}{a}$. We consider
  the possible values of $\mu_0$,~$\mu_1$,~$\mu_2$ in turn, arriving
  at a contradiction in all but two cases. This is summarized in
  Table~\ref{tab:summary_terminal_mgt1} below, where
  $\lambda_i$,~$\lambda_j$,~$\lambda_k$ denote distinct elements of
  $\{3,\ldots,n\}$. The case
  $\mu_0=\lambda_0$,~$\mu_1=\lambda_1$,~$\mu_2=\lambda_2$ yields
  $\Xn{n}{m+1}{a}$, and the case
  $\mu_0=\lambda_2$,~$\mu_1=\lambda_0$,~$\mu_2=\lambda_1$ yields
  $\Xn{n}{m-1}{a}$.

\begin{table}[ht]
    \centering
    \small
    \begin{tabular}{cccllccccll} \toprule
      $\mu_0$ & $\mu_1$ & $\mu_2$ & \multicolumn{1}{c}{Contradicts} &
      \multicolumn{1}{c}{Using} & \qquad&
      $\mu_0$ & $\mu_1$ & $\mu_2$ & \multicolumn{1}{c}{Contradicts} &
      \multicolumn{1}{c}{Using} \\
      \cmidrule(r{0.3ex}){1-5} 
      \cmidrule(l{0.3ex}){7-11} 
     $\lambda_0$ & $\lambda_1$ & $\lambda_2$ & \multicolumn{1}{c}{--}
      & \multicolumn{1}{c}{--} &&
      $\lambda_2$ & $\lambda_1$ & $\lambda_i$
      &Corollary~\ref{cor:wps_weight_divisibility}\eqref{item:wps_weight_divisibility_1}
      & Lemma~\ref{claim2}\\
      $\lambda_0$ & $\lambda_1$ & $\lambda_i$
      &Corollary~\ref{cor:wps_weight_divisibility}\eqref{item:wps_weight_divisibility_1}
      & Lemma~\ref{claim9}&&
      $\lambda_2$ & $\lambda_i$ & $\lambda_j$
      &Corollary~\ref{cor:wps_weight_divisibility}\eqref{item:wps_weight_divisibility_2}
      & Lemma~\ref{claim11term}\\
      $\lambda_0$ & $\lambda_2$ & $\lambda_i$
      &Corollary~\ref{cor:wps_weight_divisibility}\eqref{item:wps_weight_divisibility_1}
      & Lemma~\ref{claim9}&&
      $\lambda_i$ & $\lambda_0$ & $\lambda_1$
      &Corollary~\ref{cor:wps_weight_divisibility}\eqref{item:wps_weight_divisibility_1}
      & Lemma~\ref{claim2}\\
      $\lambda_0$ & $\lambda_i$ & $\lambda_j$
      &Corollary~\ref{cor:wps_weight_divisibility}\eqref{item:wps_weight_divisibility_2}
      & Lemma~\ref{claim11term}&&
     $\lambda_i$ & $\lambda_0$ & $\lambda_2$
      &Corollary~\ref{cor:wps_weight_divisibility}\eqref{item:wps_weight_divisibility_1}
      & Lemma~\ref{claim2}\\
      $\lambda_1$ & $\lambda_0$ & $\lambda_2$
      &Corollary~\ref{cor:wps_weight_divisibility}\eqref{item:wps_weight_divisibility_1}
      & Lemma~\ref{claim10}&&
     $\lambda_i$ & $\lambda_0$ & $\lambda_j$
      &Corollary~\ref{cor:wps_weight_divisibility}\eqref{item:wps_weight_divisibility_1}
      & Lemma~\ref{claim4}\\
      $\lambda_1$ & $\lambda_0$ & $\lambda_i$
      &Corollary~\ref{cor:wps_weight_divisibility}\eqref{item:wps_weight_divisibility_1}
      & Lemma~\ref{claim8}&&
      $\lambda_i$ & $\lambda_1$ & $\lambda_2$
      &Corollary~\ref{cor:wps_weight_divisibility}\eqref{item:wps_weight_divisibility_1}
      & Lemma~\ref{claim2}\\
      $\lambda_1$ & $\lambda_2$ & $\lambda_i$
      &Corollary~\ref{cor:wps_weight_divisibility}\eqref{item:wps_weight_divisibility_1}
      & Lemma~\ref{claim8}&&
      $\lambda_i$ & $\lambda_1$ & $\lambda_j$
      &Corollary~\ref{cor:wps_weight_divisibility}\eqref{item:wps_weight_divisibility_1}
      & Lemma~\ref{claim4}\\
      $\lambda_1$ & $\lambda_i$ & $\lambda_j$
      &Corollary~\ref{cor:wps_weight_divisibility}\eqref{item:wps_weight_divisibility_1}
      & Lemma~\ref{claim8}&&
      $\lambda_i$ & $\lambda_2$ & $\lambda_j$
      &Corollary~\ref{cor:wps_weight_divisibility}\eqref{item:wps_weight_divisibility_1}
      & Lemma~\ref{claim7}\\
      $\lambda_2$ & $\lambda_0$ & $\lambda_1$ & \multicolumn{1}{c}{--}
      & \multicolumn{1}{c}{--} &&
      $\lambda_i$ & $\lambda_j$ & $\lambda_k$
      &Corollary~\ref{cor:wps_weight_divisibility}\eqref{item:wps_weight_divisibility_1}
      & Lemma~\ref{claim8}\\
      $\lambda_2$ & $\lambda_0$ & $\lambda_i$
      &Corollary~\ref{cor:wps_weight_divisibility}\eqref{item:wps_weight_divisibility_1}
      & Lemma~\ref{claim2}\\
      \bottomrule
    \end{tabular}
    \vspace{0.5em}
    \caption{A summary of the argument in the case where $m>1$.}
    \label{tab:summary_terminal_mgt1}
\end{table}

Suppose now that $m=1$. We argue exactly as for $m>1$, except in
those cases where Lemma~\ref{claim7} is used. We consider these three
cases separately, once again writing $\lambda_i$ for
$\lambda_i^{(m,a)}$. When $\mu_0 = \lambda_0$,~$\mu_1 =
\lambda_1$,~$\mu_2 = \lambda_i$ with $i \geq 3$, we argue as in
Proposition~\ref{canonicalwpsmutationgraph}
case~\ref{canonical_wps_case_3}. When $\mu_0 =\lambda_0$,~$\mu_1
=\lambda_2$,~$\mu_2 =\lambda_i$ with $i \geq 3$, we argue as in
Proposition~\ref{canonicalwpsmutationgraph}
case~\ref{canonical_wps_case_4}. When $\mu_0 =\lambda_i$ with $i \geq
3$,~$\mu_1=\lambda_2$,~$\mu_2 =\lambda_j$ with $j \geq 3$, we have
$\mu_0 =\frac{t_{n-1}}{y_{a_i}}\left(\frac{t_{n-1}}{y_a}+1\right)$,
$\mu_1 = \left(\frac{t_{n-1}}{y_a}+1\right)^2$ and $\mu_2
=\frac{t_{n-1}}{y_{a_j}}\left(\frac{t_{n-1}}{y_a}+1\right)$. Since
$n\geq 5$ we can find $a_k \in \{0,1,\ldots,n-2\} \setminus
\{a,a_i,a_j\}$. Let $p$ be a prime dividing $y_{a_k}$. Then $p$
divides $\mu_0$ and $\mu_2$ but $p$ does not divide $\mu_1$,
contradicting
Corollary~\ref{cor:wps_weight_divisibility}\eqref{item:wps_weight_divisibility_1}.
\end{proof}

  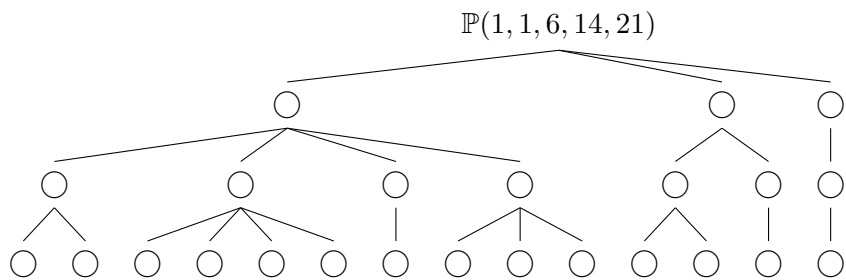
\begin{figure}[ht]
    \centering
\begin{tikzpicture}[grow'=down,sibling distance=5pt]
\Tree 
[.$\Proj{(1, 1, 6, 14, 21)}$ 
	[.{$\bigcirc$} 
		[.$\bigcirc$
			 [.$\bigcirc$
			]
		]
	]
	[.$\bigcirc$ 
		[.$\bigcirc$
			 [.$\bigcirc$
			]
		]
		[.$\bigcirc$
			 [.$\bigcirc$
			]
			 [.$\bigcirc$
			]
		]
	]
	[.{$\bigcirc$}
		[.$\bigcirc$
			 [.$\bigcirc$
			]
			 [.$\bigcirc$
			]
			 [.$\bigcirc$
			]
		]
		[.$\bigcirc$
			 [.$\bigcirc$
			]
		]
		[.$\bigcirc$
			 [.$\bigcirc$
			]
			 [.$\bigcirc$
			]
			 [.$\bigcirc$
			]
			 [.$\bigcirc$
			]
		]
		[.$\bigcirc$
			 [.$\bigcirc$
			]
			 [.$\bigcirc$
			]
		]
	]
]
\end{tikzpicture}    
\caption{The graph of mutations between four-dimensional weighted
  projective spaces to a depth of $3$}
\label{fig:4dwpsmutationgraph}
\end{figure}
 
\begin{remark}
  The requirement in Proposition~\ref{terminalwpsmutationgraph} that
  $n\geq5$ is necessary. Figure~\ref{fig:4dwpsmutationgraph} shows the
  graph of mutations between four-dimensional weighted projective
  spaces starting from $X_4=\Proj\left(1,1,6,14,21\right)$ to a depth
  of three, where $\bigcirc$ denotes some weighted projective space.
 The rightmost branch of the tree above corresponds to a mutation
  with $\lambda_0=1,\lambda_1=21$ and $\lambda_2=1$. It can be shown
  using the methods of \S\ref{sec:canonical_sings} that this branch
  continues as a chain. We do not know what happens in the other
  branches at greater depth.
\end{remark}

\appendix
\section{The Sylvester Numbers and Related Sequences}

Recall the definition of Sylvester numbers from
Definition~\ref{def:Sylvester}.  Recall also that $t_n := y_n-1$.

\begin{lemma}\label{canonlemma5}\label{sylv++}
  Let $n\geq 1$ and let $i\in\{ 0 ,\ldots, n-1\}$. Then $y_i\divides
  t_n/y_i + 1$ and $y_i\notdivides 2t_n/y_i + 1$.
\end{lemma}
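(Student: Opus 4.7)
The plan is to work modulo $y_i$ throughout. First I would rewrite $t_n/y_i$ as the explicit product $\prod_{j=0,\,j\neq i}^{n-1} y_j$, split into the factors with $j<i$ and those with $i<j<n$, and analyse each piece modulo $y_i$.

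For the factors with $j<i$, I would use the defining identity $y_i = 1 + \prod_{j<i} y_j$, i.e.\ $y_i = 1 + t_i$, which gives
\[
\prod_{j<i} y_j \;=\; t_i \;=\; y_i - 1 \;\equiv\; -1 \pmod{y_i}.
\]
For the factors with $j > i$, note that $y_j = 1 + \prod_{k<j} y_k$ and that this product contains $y_i$ as a factor (since $i<j$), hence $y_j \equiv 1 \pmod{y_i}$ for every such $j$. Multiplying gives $\prod_{i<j<n} y_j \equiv 1 \pmod{y_i}$, and combining both pieces yields the crucial congruence
\[
\frac{t_n}{y_i} \;\equiv\; -1 \pmod{y_i}.
\]

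From here both claims are immediate: $t_n/y_i + 1 \equiv 0 \pmod{y_i}$, proving the divisibility statement; and $2t_n/y_i + 1 \equiv -1 \pmod{y_i}$, which is nonzero modulo $y_i$ because $y_i \geq y_0 = 2$, proving the non-divisibility statement.

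There is no real obstacle here; the only subtle point is making sure the case $i=0$ is handled correctly, where $\prod_{j<i} y_j$ is the empty product $1$ rather than being given by the identity $y_i - 1$. In that case $y_0 = 2$ directly and one checks that $t_n$ is even (indeed, $2 \mid t_n$ since $y_0 = 2$ is one of the factors of $t_n$ whenever $n \geq 1$, so $t_n/y_0$ is an integer and $t_n/y_0 + 1$ is odd -- wait, this needs care). The cleanest route is to treat $i=0$ separately: $t_n/y_0 = \prod_{1 \leq j < n} y_j$, each factor of which is $\equiv 1 \pmod 2$ since all later Sylvester numbers are odd, giving $t_n/y_0 \equiv 1 \equiv -1 \pmod{2}$, and then the two conclusions follow as before.
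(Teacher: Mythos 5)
Your proof is correct, but it takes a genuinely different route from the paper's. You compute $t_n/y_i$ directly modulo $y_i$ by splitting the product $\prod_{j\neq i}y_j$ into the factors with $j<i$, whose product is $t_i=y_i-1\equiv -1\pmod{y_i}$, and the factors with $j>i$, each of which satisfies $y_j\equiv 1\pmod{y_i}$ because $y_i$ divides $y_j-1=t_j$; this yields the single congruence $t_n/y_i\equiv -1\pmod{y_i}$, from which both claims follow immediately. The paper instead argues by induction on $n$, using the identity $t_n/y_i+1=(t_{n-1}/y_i+1)y_{n-1}-(y_{n-1}-1)$ together with the same underlying fact that $y_i\divides y_{n-1}-1$ when $i\le n-2$, and then deduces the non-divisibility statement from the divisibility statement via $y_i\divides 2t_n/y_i+2$. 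Your version is arguably cleaner, avoids the induction, and makes explicit the slightly stronger fact $t_n/y_i\equiv -1\pmod{y_i}$. One small remark: your concern about the case $i=0$ is unnecessary, since the empty product equals $1=t_0=y_0-1$, so the identity $\prod_{j<i}y_j=y_i-1\equiv -1\pmod{y_i}$ already covers $i=0$; your separate parity check for that case is nevertheless also valid.
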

\begin{proof}
  We proceed by induction on $n$. The base case $n=1$ holds
  trivially.  Note that:
  \[
  t_n/y_i + 1 = (t_{n-1}/y_i + 1)y_{n-1} - (y_{n-1}-1)
  \]
  If $i \leq n-2$ then $y_i\divides y_{n-1}-1$ and, by the induction
  hypothesis, $y_i\divides t_{n-1}/y_i + 1$.  Thus $y_i\divides
  t_n/y_i + 1$.  If $i = n-1$ then $t_n/y_i+1 = y_{n-1}$, which is
  certainly divisible by $y_i$.  This completes the induction step,
  proving that $y_i \divides t_n/y_i + 1$.  It follows that $y_i
  \divides 2t_n/y_i + 2$, and so $y_i \notdivides 2t_n/y_i + 1$.
\end{proof}

\begin{lemma}\label{claim1}
  Let $\lambda_i^{(m,a)}$ be as in
  Proposition~\ref{canonicalwpsmutationgraph}.  For
  $i\in\{0,\ldots,n\}$ the sequence $\left(\lambda_i^{(m,a)}\right)_{m\ge 0}$ is
  increasing.
\end{lemma}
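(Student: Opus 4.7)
The strategy is to introduce the auxiliary ratio
\[
R^{(m)} := \frac{\lambda_1^{(m,a)}+\lambda_2^{(m,a)}}{\lambda_0^{(m,a)}},
\]
which is precisely the factor appearing throughout the recursion of Proposition~\ref{canonicalwpsmutationgraph}. The plan is to prove by induction on $m$ that $R^{(m)}\geq 1$ for every $m\geq 0$, and then to read off the monotonicity of each of the $n+1$ sequences directly from the recursion.

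For the base case, a direct computation gives $R^{(0)} = (2t_{n-1}/y_a + 1)/1 \geq 1$. For the inductive step I assume $R^{(m-1)}\geq 1$ and observe that
\[
\frac{(\lambda_1^{(m-1,a)}+\lambda_2^{(m-1,a)})^2}{\lambda_0^{(m-1,a)}} \;\geq\; \lambda_1^{(m-1,a)}+\lambda_2^{(m-1,a)}.
\]
Substituting the recursion into the definition of $R^{(m)}$ and dividing by $\lambda_0^{(m,a)}=\lambda_2^{(m-1,a)}$ then gives
\[
R^{(m)} \;\geq\; \frac{\lambda_1^{(m-1,a)}+\lambda_1^{(m-1,a)}+\lambda_2^{(m-1,a)}}{\lambda_2^{(m-1,a)}} \;>\; 1,
\]
which completes the induction.

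Granting $R^{(m-1)}\geq 1$, monotonicity is then immediate on a case-by-case basis. The sequence $\lambda_1^{(m,a)}$ is constant in $m$. For $i\geq 3$ the recursion reads $\lambda_i^{(m,a)} = \lambda_i^{(m-1,a)}\cdot R^{(m-1)}\geq \lambda_i^{(m-1,a)}$. For $i=2$ we have $\lambda_2^{(m,a)} = (\lambda_1^{(m-1,a)}+\lambda_2^{(m-1,a)})\cdot R^{(m-1)}\geq \lambda_1^{(m-1,a)}+\lambda_2^{(m-1,a)}\geq \lambda_2^{(m-1,a)}$. Finally for $i=0$, the equality $\lambda_0^{(m,a)} = \lambda_2^{(m-1,a)}$ reduces monotonicity of $\lambda_0^{(m,a)}$ to the case $i=2$ already established (with the base $m=1$ giving $\lambda_0^{(1,a)}=\lambda_2^{(0,a)}=1=\lambda_0^{(0,a)}$).

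The only real content is the induction for $R^{(m)}\geq 1$, and as shown above this is short; there is no genuine obstacle, since the recursion has been set up so that the multiplicative factor is exactly $R^{(m-1)}$. The mild subtlety to flag is that the sequences are non-decreasing rather than strictly increasing — indeed $\lambda_1^{(m,a)}$ is constant and $\lambda_0^{(m,a)}$ is constant on the step $m=0\to 1$ — so "increasing" must be read in the weak sense.
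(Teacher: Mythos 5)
Your proof is correct, and it is a complete write-up of exactly the ``straightforward induction on $m$'' that the paper invokes without giving details: the key point in either case is that the common multiplicative factor $\bigl(\lambda_1^{(m-1,a)}+\lambda_2^{(m-1,a)}\bigr)/\lambda_0^{(m-1,a)}$ is at least $1$, which is your $R^{(m-1)}\geq 1$. Your closing remark is also apt: since $\lambda_1^{(m,a)}$ is constant in $m$ (and $\lambda_0^{(1,a)}=\lambda_0^{(0,a)}$), ``increasing'' in the lemma must indeed be read as ``non-decreasing'', which is all that the later applications (e.g.\ Lemma~\ref{claim2} and Lemma~\ref{lem:not_canonical}) require.
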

\begin{proof}
  This is a straightforward induction on $m$.
\end{proof}
\begin{lemma}\label{claim3}\label{claim5}
  Let $\lambda_i^{(m,a)}$ be as in
  Proposition~\ref{canonicalwpsmutationgraph}. Fix $m \geq 0$.  Then
  $\lambda_0^{(m,a)}$,~$\lambda_1^{(m,a)}$, and~$\lambda_2^{(m,a)}$ are
  pairwise coprime.
\end{lemma}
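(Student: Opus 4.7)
The plan is to proceed by induction on $m$. The base case $m=0$ is immediate: by definition $\lambda_0^{(0,a)} = \lambda_2^{(0,a)} = 1$, so all pairwise gcds among $\lambda_0^{(0,a)}$, $\lambda_1^{(0,a)}$, $\lambda_2^{(0,a)}$ are $1$.

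For the inductive step, I would assume that $\lambda_0^{(m-1,a)}$, $\lambda_1^{(m-1,a)}$, $\lambda_2^{(m-1,a)}$ are pairwise coprime, and use the recursive formulae
\[
\lambda_0^{(m,a)} = \lambda_2^{(m-1,a)}, \qquad \lambda_1^{(m,a)} = \lambda_1^{(m-1,a)}, \qquad \lambda_2^{(m,a)} = \frac{\bigl(\lambda_1^{(m-1,a)}+\lambda_2^{(m-1,a)}\bigr)^2}{\lambda_0^{(m-1,a)}}.
\]
From the first two identities, $\gcd\{\lambda_0^{(m,a)},\lambda_1^{(m,a)}\} = \gcd\{\lambda_2^{(m-1,a)},\lambda_1^{(m-1,a)}\} = 1$ by the inductive hypothesis, so one of the three coprimality conditions is free.

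The remaining two conditions will both follow from the same observation. Suppose $p$ is a prime dividing $\lambda_2^{(m,a)}$. I would argue by contradiction that $p$ cannot divide $\lambda_1^{(m-1,a)}$ or $\lambda_2^{(m-1,a)}$: if it divided exactly one of them (which is all that the inductive hypothesis allows), then $p$ would not divide the sum $\lambda_1^{(m-1,a)}+\lambda_2^{(m-1,a)}$, hence not its square, and hence not $\lambda_2^{(m,a)}$ either (even after dividing by $\lambda_0^{(m-1,a)}$). This is a contradiction. Therefore $p \nmid \lambda_0^{(m,a)} = \lambda_2^{(m-1,a)}$ and $p \nmid \lambda_1^{(m,a)} = \lambda_1^{(m-1,a)}$, which gives $\gcd\{\lambda_0^{(m,a)},\lambda_2^{(m,a)}\} = \gcd\{\lambda_1^{(m,a)},\lambda_2^{(m,a)}\} = 1$.

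There is no real obstacle here: the argument is essentially bookkeeping once one notices that $\lambda_1^{(m,a)}$ and $\lambda_0^{(m,a)}$ come directly from the previous level, and that the numerator of $\lambda_2^{(m,a)}$ is the square of a sum whose two summands are coprime by the inductive hypothesis. The only implicit point to check is that $\lambda_2^{(m,a)}$ is indeed an integer, i.e.\ that $\lambda_0^{(m-1,a)}$ divides $\bigl(\lambda_1^{(m-1,a)}+\lambda_2^{(m-1,a)}\bigr)^2$; this is guaranteed by Theorem~\ref{mutationformula} since these weights arise from an actual mutation formula, but it can also be checked directly from the recursion.
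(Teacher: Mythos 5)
Your proof is correct and follows essentially the same route as the paper: an induction on $m$ driven by the recursion, using that the numerator of $\lambda_2^{(m,a)}$ is the square of a sum of two quantities that are coprime by the inductive hypothesis. The only difference is organisational — the paper first runs a separate induction to get $\gcd\{\lambda_1^{(m,a)},\lambda_2^{(m,a)}\}=1$ and then deduces the other two coprimalities, whereas you carry the full pairwise-coprimality statement through a single induction — but the underlying argument is the same.
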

\begin{proof}
  We begin by showing that $\lambda_1^{(m,a)}$ and $\lambda_2^{(m,a)}$
  are coprime, proceeding by induction. The base case $m=0$ is
  trivial.  Suppose that $\lambda_1^{(m-1,a)}$ and
  $\lambda_2^{(m-1,a)}$ are coprime, and suppose that there exists a
  prime $p$ dividing both $\lambda_1^{(m,a)}$ and
  $\lambda_2^{(m,a)}$. Then:
  \begin{align*}
    p \divides \lambda_1^{(m-1,a)} 
    &&\text{and}&& 
    p\divides \textstyle \frac{(\lambda_1^{(m-1,a)}+\lambda_2^{(m-1,a)})^2}{\lambda_0^{(m-1,a)}}
  \end{align*}
  Thus $p\divides\lambda_2^{(m-1,a)}$, contradicting the hypothesis
  that $\lambda_1^{(m-1,a)}$ and $\lambda_2^{(m-1,a)}$ are coprime. This
  completes the induction step, showing that $\lambda_1^{(m,a)}$ and
  $\lambda_2^{(m,a)}$ are coprime for all $m$.  It follows immediately
  that $\lambda_0^{(m,a)} =\lambda_2^{(m-1,a)}$ and $\lambda_1^{(m,a)}
  = \lambda_1^{(m-1,a)}$ are also coprime for all $m$.

  Suppose now that $p\divides\lambda_0^{(m,a)} = \lambda_2^{(m-1,a)}$.
  Then $p\notdivides\lambda_1^{(m,a)}$, as we have just seen, and so
  $p$ does not divide the numerator of
  \begin{equation*}
    \lambda_2^{(m,a)}
    =\textstyle\frac{\left(\lambda_1^{(m,a)}+\lambda_2^{(m-1,a)}\right)^2}{\lambda_0^{(m-1,a)}}
  \end{equation*}
  Thus $p\notdivides\lambda_2^{(m,a)}$, so $\lambda_0^{(m,a)}$ and
  $\lambda_2^{(m,a)}$ are coprime.
\end{proof}

\begin{lemma}\label{claim2}\label{claim4}\label{claim6}
  Let $\lambda_i^{(m,a)}$ be as in
  Proposition~\ref{canonicalwpsmutationgraph}, let $m\ge 1$, and let
  $a \in \{0,1,\ldots,n-2\}$. There exists a prime $p$ such that $p
  \notdivides \lambda_0^{(m,a)}$, $p \notdivides \lambda_1^{(m,a)}$,
  and $p \divides \lambda_i^{(m,a)}$ for all $i \geq 2$.
\end{lemma}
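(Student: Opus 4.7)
My plan is to diagonalise the recursion by introducing an auxiliary integer sequence $(u_m)$ defined by $u_0 := 1$, $u_1 := b+1$, and provisionally $u_m := (b+u_{m-1}^2)/u_{m-2}$ for $m \geq 2$, where $b := 2t_{n-1}/y_a$ (which equals $\lambda_1^{(m,a)}$ for every $m$, by the recurrence). The required prime $p$ will then be extracted as any prime divisor of $u_m$.

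The heart of the argument is a simultaneous induction on $m$ showing that $u_m$ is a positive integer and that the following closed forms hold:
\[
\lambda_0^{(m,a)} = u_{m-1}^2, \qquad \lambda_2^{(m,a)} = u_m^2, \qquad \lambda_i^{(m,a)} = \lambda_i^{(0,a)}\, u_{m-1}\, u_m \quad (i \geq 3).
\]
The base cases $m=0,1$ are direct. For the inductive step, the recurrences of Proposition~\ref{canonicalwpsmutationgraph} together with the inductive hypothesis give
\[
\lambda_2^{(m,a)} = \frac{\bigl(\lambda_1^{(m-1,a)} + \lambda_2^{(m-1,a)}\bigr)^2}{\lambda_0^{(m-1,a)}} = \frac{(b + u_{m-1}^2)^2}{u_{m-2}^2}.
\]
Because $\lambda_2^{(m,a)}$ is already known to be a positive integer (it is a weight), its positive square root $(b+u_{m-1}^2)/u_{m-2} = u_m$ must itself be a positive integer, which simultaneously establishes integrality of $u_m$ and the identity $\lambda_2^{(m,a)} = u_m^2$. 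The identity $\lambda_0^{(m,a)} = u_{m-1}^2$ then follows from $\lambda_0^{(m,a)} = \lambda_2^{(m-1,a)}$, and $\lambda_i^{(m,a)} = \lambda_i^{(0,a)}\, u_{m-1}\, u_m$ follows by substituting the inductive expression for $\lambda_i^{(m-1,a)}$ into its recurrence and simplifying.

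With these closed forms in hand the conclusion is immediate. By Lemma~\ref{claim1} the sequence $u_m^2 = \lambda_2^{(m,a)}$ is strictly increasing in $m$, so $u_m \geq u_1 = b+1 > 1$ for $m \geq 1$, and we may pick any prime $p$ with $p \divides u_m$. Then $p \divides u_m^2 = \lambda_2^{(m,a)}$, and $p \divides u_{m-1} u_m \divides \lambda_i^{(m,a)}$ for every $i \geq 3$. By Lemma~\ref{claim3} the three weights $\lambda_0^{(m,a)}$, $\lambda_1^{(m,a)}$, $\lambda_2^{(m,a)}$ are pairwise coprime, so the fact that $p$ divides $\lambda_2^{(m,a)}$ forces $p \notdivides \lambda_0^{(m,a)}$ and $p \notdivides \lambda_1^{(m,a)}$, as required. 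The main obstacle is the bookkeeping for the simultaneous induction: one must propagate integrality of $u_m$ alongside the three closed-form identities without circularity, and it is the a priori integrality of $\lambda_2^{(m,a)}$ that does the real work in forcing $u_m \in \Z$.
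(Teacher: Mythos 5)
Your proof is correct, but it takes a genuinely different route from the paper's. The paper's argument is short and local: Lemma~\ref{claim1} gives $\lambda_0^{(m-1,a)}=\lambda_2^{(m-2,a)}\le\lambda_2^{(m-1,a)}<\lambda_1^{(m-1,a)}+\lambda_2^{(m-1,a)}$, so the ratio $(\lambda_1^{(m-1,a)}+\lambda_2^{(m-1,a)})/\lambda_0^{(m-1,a)}$ exceeds $1$; a prime dividing the numerator but not the denominator then divides $\lambda_i^{(m,a)}$ for all $i\ge 2$, and Lemma~\ref{claim5} rules out $\lambda_0^{(m,a)}$ and $\lambda_1^{(m,a)}$. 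You instead diagonalise the recursion: with $b=2t_{n-1}/y_a$ and $u_m=(b+u_{m-1}^2)/u_{m-2}$ you prove the closed forms $\lambda_0^{(m,a)}=u_{m-1}^2$, $\lambda_2^{(m,a)}=u_m^2$, $\lambda_i^{(m,a)}=\lambda_i^{(0,a)}u_{m-1}u_m$ (I checked the base cases and the inductive step, e.g.\ for $X_3$ one gets $u_0=1,u_1=7,u_2=55,u_3=433$ against $\Proj(1,6,49,28)$, $\Proj(49,6,3025,1540)$, etc.), then take any prime $p\divides u_m$ and finish with Lemma~\ref{claim3}. Your route is longer but buys more: it shows $\lambda_0^{(m,a)}$ and $\lambda_2^{(m,a)}$ are perfect squares for $m\ge 1$, and it makes the existence of the required prime completely transparent, since $\lambda_1^{(m-1,a)}+\lambda_2^{(m-1,a)}=u_mu_{m-2}$ while $\lambda_0^{(m-1,a)}=u_{m-2}^2$ --- whereas the paper's inference ``ratio $>1$, hence some prime divides the numerator but not the denominator'' is, taken literally, not valid for arbitrary integers and relies on exactly the coprimality structure your closed forms expose. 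Two small points of bookkeeping: both your argument (via ``$\lambda_2^{(m,a)}$ is a weight, hence an integer, hence its rational square root is an integer'') and the paper's rely on the a priori integrality of the $\lambda_i^{(m,a)}$ asserted in Proposition~\ref{canonicalwpsmutationgraph}, so no new circularity is introduced; and Lemma~\ref{claim1} only asserts the sequence is increasing, not strictly, but all you need is $u_m^2\ge u_1^2=(b+1)^2>1$, which it does give.
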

\begin{proof}
  Lemma~\ref{claim1} implies that, for $m \geq 2$,
  $\lambda_0^{(m-1,a)}=\lambda_2^{(m-2,a)}\le\lambda_2^{(m-1,a)}$.
  Thus
  $\frac{\lambda_1^{(m,a)}+\lambda_2^{(m-1,a)}}{\lambda_0^{(m-1,a)}}>1$
  for all $m \geq 1$, and so there is some prime $p$ such that
  $p\divides \lambda_1^{(m-1,a)}+\lambda_2^{(m-1,a)}$ but
  $p\notdivides\lambda_0^{(m-1,a)}$.  Thus $p\divides\lambda_i^{(m)}$
  for all $i\in\{2,\ldots,n\}$.  Lemma~\ref{claim5} now implies that
  $p \notdivides \lambda_0^{(m,a)}$ and $p \notdivides
  \lambda_1^{(m,a)}$.
\end{proof}

\begin{lemma}\label{claim7}\label{claim9}
  Let $\lambda_i^{(m,a)}$ be as in
  Proposition~\ref{canonicalwpsmutationgraph}, let $m\ge 2$, and let
  $a \in \{0,1,\ldots,n-2\}$. There exists a prime $p$ such that $p
  \notdivides \lambda_1^{(m,a)}$, $p \notdivides \lambda_2^{(m,a)}$,
  and $p \divides \lambda_i^{(m,a)}$ for $i=0$ and all $i \geq 3$.
\end{lemma}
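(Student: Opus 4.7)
The plan is to obtain the desired prime by invoking Lemma~\ref{claim2} at index $m-1$; this is permitted since $m\ge 2$ implies $m-1\ge 1$. This yields a prime $p$ with the properties
\begin{equation*}
p\notdivides\lambda_0^{(m-1,a)},\qquad p\notdivides\lambda_1^{(m-1,a)},\qquad p\divides\lambda_i^{(m-1,a)}\text{ for all }i\ge 2.
\end{equation*}
I will then verify directly, using the recursive formulas in Proposition~\ref{canonicalwpsmutationgraph}, that this same $p$ has the divisibility properties claimed for the $\lambda_i^{(m,a)}$.

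The verifications are essentially immediate once one tracks $p$-adic valuations through the recursion. From $\lambda_0^{(m,a)}=\lambda_2^{(m-1,a)}$ and $\lambda_1^{(m,a)}=\lambda_1^{(m-1,a)}$, parts involving $\lambda_0^{(m,a)}$ and $\lambda_1^{(m,a)}$ follow at once. For $\lambda_2^{(m,a)}$, the key observation is that $p\divides\lambda_2^{(m-1,a)}$ but $p\notdivides\lambda_1^{(m-1,a)}$ forces $p\notdivides\bigl(\lambda_1^{(m-1,a)}+\lambda_2^{(m-1,a)}\bigr)$; combined with $p\notdivides\lambda_0^{(m-1,a)}$, the recursion then gives $p\notdivides\lambda_2^{(m,a)}$. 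For $i\ge 3$, the $p$-adic valuation of
\begin{equation*}
\lambda_i^{(m,a)}=\frac{\lambda_i^{(m-1,a)}\bigl(\lambda_1^{(m-1,a)}+\lambda_2^{(m-1,a)}\bigr)}{\lambda_0^{(m-1,a)}}
\end{equation*}
equals $v_p\bigl(\lambda_i^{(m-1,a)}\bigr)\ge 1$, since the other two factors contribute $0$.

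There is really no hard step: the entire argument is a one-step bookkeeping exercise, and the only thing to be mildly careful about is that the division by $\lambda_0^{(m-1,a)}$ in the recursion does not strip off the factor of $p$ from the numerator. This is guaranteed because Lemma~\ref{claim2} (applied at level $m-1$) provides precisely the prime $p$ that avoids $\lambda_0^{(m-1,a)}$. Thus the proposed proof is short: cite Lemma~\ref{claim2} at $m-1$, then run through the four recursion cases.
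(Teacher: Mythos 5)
Your proof is correct and follows essentially the same route as the paper: both obtain the prime by applying Lemma~\ref{claim2} at level $m-1$ and then push it through the recursion. The only cosmetic difference is that the paper deduces $p\notdivides\lambda_1^{(m,a)}$ and $p\notdivides\lambda_2^{(m,a)}$ by citing the pairwise-coprimality Lemma~\ref{claim3} (since $p\divides\lambda_0^{(m,a)}$), whereas you verify these directly by tracking $p$-adic valuations, which is equally valid.
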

\begin{proof}
  By Lemma~\ref{claim6} there exists a prime $p$ such that $p$ divides
  $\lambda_i^{(m-1,a)}$ for $i\in\{2,\ldots,n\}$ but $p$ does not
  divide $\lambda_0^{(m-1,a)}$ or $\lambda_1^{(m-1,a)}$. Thus
  $p\divides\lambda_i^{(m,a)}$ for all $i\in\{3,\ldots,n\}$ and, as
  $p\divides\lambda_2^{(m-1,a)}=\lambda_0^{(m,a)}$, we have by
  Lemma~\ref{claim5} that $p$ does not divide $\lambda_1^{(m,a)}$ or
  $\lambda_2^{(m,a)}$.
\end{proof}

\begin{lemma}\label{claim11}
  Let $\lambda_i^{(m,a)}$ be as in
  Proposition~\ref{canonicalwpsmutationgraph}, let $m\ge 0$, and let
  $a \in \{0,1,\ldots,n-2\}$. Then $2 \notdivides \lambda_0^{(m,a)}$,
  $2 \notdivides \lambda_2^{(m,a)}$, and $2 \divides
  \lambda_i^{(m,a)}$ for $i=1$ and all $i \geq 3$.
\end{lemma}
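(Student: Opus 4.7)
My plan is to proceed by induction on $m$. In the base case $m=0$, the explicit formulas give $\lambda_0^{(0,a)} = \lambda_2^{(0,a)} = 1$, both odd, while $\lambda_1^{(0,a)} = 2t_{n-1}/y_a$ and $\lambda_i^{(0,a)} = 2t_{n-1}/y_{k_i}$ for $i \geq 3$ are manifestly even, using Lemma~\ref{lem:sylv}\eqref{item:sylv_2} to ensure that $t_{n-1}/y_a$ and $t_{n-1}/y_{k_i}$ are integers.

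For the inductive step, I would assume the asserted parities hold at level $m-1$ and track them through the recurrences of Proposition~\ref{canonicalwpsmutationgraph}. Set $S := \lambda_1^{(m-1,a)} + \lambda_2^{(m-1,a)}$; by the induction hypothesis $S$ is odd (even plus odd), and $\lambda_0^{(m-1,a)}$ is odd. Then $\lambda_0^{(m,a)} = \lambda_2^{(m-1,a)}$ remains odd, and $\lambda_1^{(m,a)} = \lambda_1^{(m-1,a)}$ remains even. The weight $\lambda_2^{(m,a)} = S^2/\lambda_0^{(m-1,a)}$ is a quotient of two odd integers, hence odd (it is a priori an integer, being a weight of a Fano simplex). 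For $i \geq 3$, the expression $\lambda_i^{(m,a)} = \lambda_i^{(m-1,a)} \cdot S / \lambda_0^{(m-1,a)}$ has the same $2$-adic valuation as $\lambda_i^{(m-1,a)}$, which is at least $1$ by hypothesis; hence $\lambda_i^{(m,a)}$ is even.

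There is no significant obstacle here; the lemma is a routine parity bookkeeping exercise on the recurrences. The only point worth flagging is that in computing $\lambda_2^{(m,a)}$ and $\lambda_i^{(m,a)}$ one divides by the odd integer $\lambda_0^{(m-1,a)}$, and because the quotients are integer weights to begin with, this division does not disturb the $2$-adic valuation of the numerator. The entire argument therefore collapses to the observation that the parity pattern $(\text{odd},\text{even},\text{odd},\text{even},\ldots,\text{even})$ is preserved by a single step of the mutation recurrence.
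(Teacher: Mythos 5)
Your proof is correct and is exactly the argument the paper intends: the paper's own proof of this lemma consists of the single sentence ``This is a straightforward induction on $m$,'' and your write-up simply makes that induction explicit (base case from the closed formulas, parity bookkeeping through the recurrence, with the correct observation that dividing an integer by the odd quantity $\lambda_0^{(m-1,a)}$ does not change $2$-adic valuation).
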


\begin{proof}
  This is a straightforward induction on $m$.
\end{proof}

\begin{lemma}\label{claim8}
  Let $\lambda_i^{(m,a)}$ be as in
  Proposition~\ref{canonicalwpsmutationgraph}, let $a \in
  \{0,1,\ldots,n-2\}$, and let $i\in\{3,\ldots,n\}$. Then there exists
  $k>1$ such that for all $m\geq 0$, $k \notdivides\lambda_j^{(m,a)}$
  for $j \in \{0,2,i\}$ and $k \divides \lambda_j^{(m,a)}$ for
  $j\in\{0,\ldots,n\}\setminus\{0,2,i\}$. 
\end{lemma}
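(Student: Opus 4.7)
The plan is to split into two cases based on the Sylvester index $k_i \in \{0,1,\ldots,n-2\}$ determined by $\lambda_i^{(0,a)} = 2t_{n-1}/y_{k_i}$, and then induct on $m$. When $k_i \ne 0$, the number $y_{k_i}$ is odd and at least $3$, so it admits an odd prime divisor; I would take $k=p$ to be any such prime. When $k_i = 0$ (which forces $a\ne 0$, since $k_i \ne a$), the prime $2$ fails to separate $\lambda_i^{(m,a)}$ from the other weights, and I would instead take $k=4$.

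For the base case $m=0$, I would verify the divisibility conditions directly using Lemma~\ref{lem:sylv}\eqref{item:sylv_2} and the pairwise coprimality of the Sylvester numbers. In the first case the key inputs are that $p \divides t_{n-1}/y_s$ for $s \ne k_i$ while $p \notdivides t_{n-1}/y_{k_i}$; combined with $p$ being odd and with $\lambda_0^{(0,a)} = \lambda_2^{(0,a)} = 1$, this immediately yields the claim. In the second case one uses that $2t_{n-1} = 4\,y_1\cdots y_{n-2}$ carries exactly two factors of $2$ while dividing by $y_{k_i} = y_0 = 2$ removes one of them, so $v_2(\lambda_i^{(0,a)}) = 1$ whereas $v_2(\lambda_j^{(0,a)}) = 2$ for $j \in \{1\} \cup (\{3,\ldots,n\}\setminus\{i\})$.

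For the induction step I would track the $\ell$-adic valuation ($\ell = p$ in Case~1, $\ell = 2$ in Case~2) through the recursion in Proposition~\ref{canonicalwpsmutationgraph}. Setting $N := \lambda_1^{(m-1,a)} + \lambda_2^{(m-1,a)}$, the induction hypothesis together with Lemma~\ref{claim5} forces $v_\ell(N) = 0$, because $\ell$ divides $\lambda_1^{(m-1,a)}$ but not $\lambda_2^{(m-1,a)}$. Since also $v_\ell(\lambda_0^{(m-1,a)}) = 0$, the division by $\lambda_0^{(m-1,a)}$ appearing in the recursive formulas is $\ell$-adically neutral, and one reads off directly that the valuations at step $m$ match those at step $m-1$ for each weight, giving the required conclusion.

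The main obstacle will be the second case. One cannot simply use a prime divisor of $y_0 = 2$, since $2$ does genuinely divide $\lambda_i^{(m,a)}$. Using $k=4$ resolves this, but the induction then requires maintaining the stronger invariant $v_2(\lambda_i^{(m,a)}) = 1$ (an equality, not merely an inequality) for all $m$, so one must track exact $2$-adic valuations throughout and verify that the single surviving factor of $2$ in $\lambda_i^{(0,a)}$ is neither gained nor lost under the mutation formula.
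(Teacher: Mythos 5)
Your proposal is correct and follows essentially the same route as the paper: the same choice of $k$ (an odd prime dividing $y_{k_i}$ when $k_i\neq 0$, and $k=4$ when $k_i=0$), the same base-case check via Lemma~\ref{lem:sylv}, and the same induction through the recursion using the pairwise coprimality of $\lambda_0^{(m,a)},\lambda_1^{(m,a)},\lambda_2^{(m,a)}$ to see that $\lambda_1^{(m-1,a)}+\lambda_2^{(m-1,a)}$ and $\lambda_0^{(m-1,a)}$ contribute nothing. Your explicit tracking of $2$-adic valuations in the $k=4$ case is a careful way of justifying the step that the paper states more tersely ("$k$ does not divide $\lambda_i^{(m-1,a)}$, so it cannot divide $\lambda_i^{(m,a)}$"), which for a composite $k$ does require exactly the observation you make that $v_2$ of the multiplier is zero.
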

\begin{proof}
  We proceed by induction on $m$. For $m=0$ we have
  $\lambda_j^{(m,a)}=2t_{n-1}/y_{a_j}$ for some $a_j$. If $y_{a_i}\neq
  2$ then let $k$ be a prime dividing $y_{a_i}$; otherwise, let
  $k=4$. In either case $k$ has the desired properties and the claim
  holds.

  Suppose now that there exists $k>1$ such that $k
  \notdivides\lambda_j^{(m-1,a)}$ for $j \in \{0,2,i\}$ and $k
  \divides \lambda_j^{(m-1,a)}$ for
  $j\in\{0,\ldots,n\}\setminus\{0,2,i\}$. Lemma~\ref{claim3} implies
  that $k$ and $\lambda_0^{(m-1)}$ are coprime. Thus as
  \begin{align*}
    \textstyle
    \lambda_j^{(m,a)}=\frac{\lambda_j^{(m-1,a)}(\lambda_1^{(m-1,a})+\lambda_2^{(m-1,a)})}{\lambda_0^{(m-1,a)}}
    &&
    \text{for $j\in\{3,\ldots,n\}$}
  \end{align*}
  we see that $k\divides\lambda_j^{(m,a)}$ for
  $j\in\{3,\ldots,n\}\setminus\{i\}$. Since $k \divides
  \lambda_1^{(m,a)} = \lambda_1^{(m-1,a)}$, by Lemma~\ref{claim3}
  again we have that $k \notdivides \lambda_0^{(m,a)}$ and $k
  \notdivides \lambda_2^{(m,a)}$.  Finally as $k$ does not divide
  $\lambda_2^{(m-1,a)}$ but does divide $\lambda_1^{(m-1,a)}$, it does
  not divide $\lambda_1^{(m-1,a)}+\lambda_2^{(m-1,a)}$; since $k$ also
  does not divide $\lambda_i^{(m-1,a)}$, by the recursion formula it
  cannot divide $\lambda_i^{(m,a)}$ either.
\end{proof}

\begin{lemma}\label{claim10}
  Let $\lambda_i^{(m,a)}$ be as in
  Proposition~\ref{canonicalwpsmutationgraph}. There exists a prime
  $p$ such that, for $m\ge 0$, $p\divides\lambda_1^{(m,a)}$ and
  $p\notdivides\lambda_0^{(m,a)}+\lambda_2^{(m,a)}$.
\end{lemma}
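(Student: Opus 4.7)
The plan is to choose $p$ to be an odd prime dividing $\lambda_1^{(0,a)} = 2t_{n-1}/y_a$. Because the recursion gives $\lambda_1^{(m,a)} = \lambda_1^{(m-1,a)}$, we have $\lambda_1^{(m,a)} = \lambda_1^{(0,a)}$ for every $m \geq 0$, so divisibility of the first factor is immediate once $p$ is chosen. To produce such a $p$: since $n \geq 4$, the set $\{0,1,\ldots,n-2\}\setminus\{a\}$ contains some index $i \geq 1$; by Lemma~\ref{lem:sylv}\eqref{item:sylv_2} any prime dividing $y_i$ also divides $t_{n-1}/y_a$, and $y_i$ is odd for $i \geq 1$, so any such prime is an odd prime dividing $2t_{n-1}/y_a$.

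For the non-divisibility of $\lambda_0^{(m,a)} + \lambda_2^{(m,a)}$, I would prove the following stronger statement by induction on $m$: $p$ divides none of $\lambda_0^{(m,a)}$, $\lambda_2^{(m,a)}$, or $\lambda_0^{(m,a)}+\lambda_2^{(m,a)}$. The base case $m=0$ is immediate since $\lambda_0^{(0,a)} = \lambda_2^{(0,a)} = 1$ and their sum is $2$, none of which is divisible by an odd prime. For the induction step, reduce the recursion modulo $p$. Using $\lambda_1^{(m-1,a)} \equiv 0 \pmod{p}$ and the inductive fact that $\lambda_0^{(m-1,a)}$ is invertible mod $p$, the recursion yields
\begin{equation*}
  \lambda_0^{(m,a)} \equiv \lambda_2^{(m-1,a)}, \qquad
  \lambda_2^{(m,a)} \equiv \bigl(\lambda_2^{(m-1,a)}\bigr)^{2}\bigl(\lambda_0^{(m-1,a)}\bigr)^{-1} \pmod{p},
\end{equation*}
whence
\begin{equation*}
  \lambda_0^{(m,a)} + \lambda_2^{(m,a)} \equiv \lambda_2^{(m-1,a)}\bigl(\lambda_0^{(m-1,a)} + \lambda_2^{(m-1,a)}\bigr)\bigl(\lambda_0^{(m-1,a)}\bigr)^{-1} \pmod{p}.
\end{equation*}
All three right-hand sides are nonzero mod $p$ by the inductive hypothesis, so the step goes through.

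There is no real obstacle here; the whole argument is bookkeeping driven by the key observation that $\lambda_1$ is constant in $m$, so mod $p$ the recursion for $(\lambda_0, \lambda_2)$ decouples from $\lambda_1$ and becomes purely multiplicative in $\lambda_2^{(m-1,a)}$ and $\lambda_0^{(m-1,a)}$. The only mild care needed is keeping track of the invertibility of $\lambda_0^{(m-1,a)}$ mod $p$, which is handled automatically by including that invertibility in the inductive hypothesis.
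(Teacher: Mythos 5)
Your proof is correct and follows essentially the same route as the paper's: both reduce $\lambda_0^{(m,a)}+\lambda_2^{(m,a)}$ modulo $p$ via the recursion and find it congruent to $\lambda_2^{(m-1,a)}\bigl(\lambda_0^{(m-1,a)}+\lambda_2^{(m-1,a)}\bigr)\bigl(\lambda_0^{(m-1,a)}\bigr)^{-1}$, which is nonzero. The only differences are cosmetic: the paper obtains $p\notdivides\lambda_0^{(m-1,a)}$ and $p\notdivides\lambda_2^{(m-1,a)}$ from Lemma~\ref{claim3} rather than carrying them in a strengthened inductive hypothesis, and you additionally justify the existence of the odd prime $p$, which the paper takes for granted.
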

\begin{proof} For the case $m=0$, take $p$ to be an odd prime dividing
  $\lambda_1=2t_{n-1}/y_a$. Suppose now that there exists a prime $p$
  such that $p\divides\lambda_1^{(m-1,a)}$ and
  $p\notdivides\lambda_0^{(m-1,a)}+\lambda_2^{(m-1,a)}$.  Then $p
  \notdivides \lambda_1^{(m,a)} = \lambda_1^{(m-1,a)}$ and and
  $p\divides\lambda_1^{(m-1,a)}(\lambda_1^{(m-1,a)}+2\lambda_2^{(m-1,a)})$.
  Now:
  \[
  \textstyle
  \lambda_0^{(m,a)}+\lambda_2^{(m,a)}
  =\frac{\lambda_2^{(m-1,a)}\big(\lambda_0^{(m-1,a)}+\lambda_2^{(m-1,a)}\big)+
    \lambda_1^{(m-1,a)}\big(\lambda_1^{(m-1,a)}+2\lambda_2^{(m-1,a)}\big)}{\lambda_0^{(m-1,a)}}
  \]
  and by Lemma~\ref{claim3}, $p\notdivides\lambda_2^{(m-1,a)}$. Thus
  $p\notdivides\lambda_0^{(m)}+\lambda_2^{(m)}$.  The result follows
  by induction on $m$.
\end{proof}

\begin{lemma}\label{canonlemma1}
  Let $\lambda_i^{(m,a)}$ be as in
  Proposition~\ref{canonicalwpsmutationgraph}, let $b \in
  \{0,1,\ldots,n-2\}$ be such that $b \ne a$, and let $m\geq 0$. Then
  $y_b\divides\lambda^{(m,a)}_2-1$
\end{lemma}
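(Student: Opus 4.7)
The plan is to prove the claim by induction on $m$. The base case $m=0$ is immediate since $\lambda_2^{(0,a)} = 1$, giving $\lambda_2^{(0,a)} - 1 = 0$, which is divisible by $y_b$.

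For the inductive step, I would use three observations modulo $y_b$. First, $\lambda_1^{(m-1,a)} = \lambda_1^{(0,a)} = 2t_{n-1}/y_a$ by induction on the recursion, and since $b \ne a$ with $b \in \{0,\ldots,n-2\}$, Lemma~\ref{lem:sylv}\eqref{item:sylv_2} gives $y_b \divides t_{n-1}/y_a$, hence $\lambda_1^{(m-1,a)} \equiv 0 \pmod{y_b}$. Second, $\lambda_0^{(m-1,a)} \equiv 1 \pmod{y_b}$: this holds for $m = 1$ because $\lambda_0^{(0,a)} = 1$, and for $m \geq 2$ because $\lambda_0^{(m-1,a)} = \lambda_2^{(m-2,a)}$, which is $\equiv 1 \pmod{y_b}$ by the inductive hypothesis applied at step $m-2$. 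Third, the inductive hypothesis gives $\lambda_2^{(m-1,a)} \equiv 1 \pmod{y_b}$.

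Combining these via the defining recursion
\begin{equation*}
\lambda_2^{(m,a)} \cdot \lambda_0^{(m-1,a)} = \left(\lambda_1^{(m-1,a)} + \lambda_2^{(m-1,a)}\right)^2,
\end{equation*}
I reduce modulo $y_b$ to obtain $\lambda_2^{(m,a)} \cdot 1 \equiv (0+1)^2 \equiv 1 \pmod{y_b}$. This is legitimate because $\lambda_2^{(m,a)}$ is known to be an integer and $\lambda_0^{(m-1,a)}$ is a unit modulo $y_b$ (being $\equiv 1$). Hence $y_b \divides \lambda_2^{(m,a)} - 1$, completing the induction.

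The only mild subtlety is handling the base of the auxiliary claim $\lambda_0^{(m-1,a)} \equiv 1 \pmod{y_b}$ separately at $m=1$, since there the formula $\lambda_0^{(m,a)} = \lambda_2^{(m-1,a)}$ does not apply and one must instead use the explicit value $\lambda_0^{(0,a)} = 1$. Otherwise the argument is a routine modular reduction once Lemma~\ref{lem:sylv}\eqref{item:sylv_2} is invoked.
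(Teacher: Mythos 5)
Your proof is correct. It is the same two-step induction on $m$ as the paper's (both need the hypothesis at steps $m-1$ and $m-2$, and both rest on $y_b\divides 2t_{n-1}/y_a=\lambda_1^{(m,a)}$ for $b\ne a$), but your execution is genuinely cleaner. The paper rewrites $\lambda_2^{(m,a)}-1$ as an explicit fraction
\[
\textstyle
\lambda^{(m,a)}_2-1 =
\frac{\lambda_1^{(m-1,a)}\big(\lambda_1^{(m-1,a)}+2\lambda_2^{(m-1,a)}\big)+
\big(\lambda^{(m-1,a)}_2-1\big)\big(\lambda^{(m-1,a)}_2+\lambda^{(m-1,a)}_0\big)-\big(\lambda^{(m-2,a)}_2-1\big)\lambda^{(m-1,a)}_2}{\lambda_0^{(m-1,a)}},
\]
checks that $y_b$ divides the numerator term by term, and then invokes Lemma~\ref{claim3} to see that $y_b$ is coprime to the denominator. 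You instead clear the denominator in the defining recursion and reduce the resulting integer identity modulo $y_b$, observing that $\lambda_0^{(m-1,a)}=\lambda_2^{(m-2,a)}\equiv 1\pmod{y_b}$ by the inductive hypothesis (with the explicit value $\lambda_0^{(0,a)}=1$ covering $m=1$). This buys you two things: you never need to verify the algebraic identity, and you replace the appeal to Lemma~\ref{claim3} (coprimality of $y_b$ with $\lambda_0^{(m-1,a)}$) by the stronger congruence $\lambda_0^{(m-1,a)}\equiv 1$, which the induction already supplies for free. The only implicit ingredient, which both proofs share, is the integrality of the $\lambda_i^{(m,a)}$ asserted in Proposition~\ref{canonicalwpsmutationgraph}; you flag this explicitly, which is fine.
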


\begin{proof}
  The cases $m=0$ and $m=1$ are straightforward. Suppose now that
  $y_b\divides\lambda^{(m-2,a)}_2-1$ and
  $y_b\divides\lambda^{(m-1,a)}_2-1$.  We have that:
  \[
  \textstyle
  \lambda^{(m,a)}_2-1 = 
  \frac{\lambda_1^{(m-1,a)}\big(\lambda_1^{(m-1,a)}+2\lambda_2^{(m-1,a)}\big)+
    \big(\lambda^{(m-1,a)}_2-1\big)\big(\lambda^{(m-1,a)}_2+\lambda^{(m-1,a)}_0\big)-\big(\lambda^{(m-2,a)}_2-1\big)\lambda^{(m-1,a)}_2}{\lambda_0^{(m-1,a)}}
  \]
  Since $y_{b}\divides 2t_{n-1}/y_a=\lambda_1^{(m,a)}$, we have by
  Lemma~\ref{claim3} that $y_{b}$ and $\lambda_0^{(m-1,a)}$ are
  coprime.  It thus suffices to show that $y_{b}$ divides the
  numerator of the above expression. But this holds by assumption. The
  Lemma follows by induction on $m$.
\end{proof}

\begin{definition}
  Let $\Xn{n}{m}{a}$ be as in \S\ref{sec:canonical_sings} and let
  $h^{(m,a)}$ denote the sum of the weights of $\Xn{n}{m}{a}$.
\end{definition}

\begin{lemma}
  \label{canonlemma6}
  \[
  h^{(m,a)}  =
  \begin{cases}
    2t_{n-1} & \text{if $m=0$} \\
    \frac{\big(\lambda_1^{(m-1,a)}+\lambda_2^{(m-1,a)}\big)h^{(m-1,a)}}{\lambda_0^{(m-1,a)}}
    & \text{otherwise}
  \end{cases} 
  \]
  and, for any $i\in\{3,\ldots,n\}$:
  \[
  h^{(m,a)} =y_{k_i}\lambda_i^{(m,a)}
  \]
\end{lemma}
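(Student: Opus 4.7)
The plan is to establish both formulas by direct summation, with the second identity following by a quick induction on $m$ built on the first.

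For the base case of the first formula, I would simply expand the definition:
\[
h^{(0,a)} = 1 + \frac{2t_{n-1}}{y_a} + 1 + \sum_{j\ne a}\frac{2t_{n-1}}{y_j}
= 2 + 2t_{n-1}\sum_{j=0}^{n-2}\frac{1}{y_j},
\]
and Lemma~\ref{lem:sylv}\eqref{item:sylv_3} evaluates the final sum to $(t_{n-1}-1)/t_{n-1}$, giving $h^{(0,a)} = 2 + 2(t_{n-1}-1) = 2t_{n-1}$ as claimed.

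For the recursion when $m\ge 1$, the idea is to collect the terms of $h^{(m,a)}$ using the recursive definition of the $\lambda_i^{(m,a)}$. Writing $s := \lambda_1^{(m-1,a)} + \lambda_2^{(m-1,a)}$ and $L_0 := \lambda_0^{(m-1,a)}$, summing the defining formulas gives
\[
h^{(m,a)}
= \lambda_2^{(m-1,a)} + \lambda_1^{(m-1,a)} + \frac{s^2}{L_0} + \frac{s}{L_0}\sum_{i=3}^{n}\lambda_i^{(m-1,a)}.
\]
The key observation is that $\sum_{i\ge 3}\lambda_i^{(m-1,a)} = h^{(m-1,a)} - L_0 - s$, so the expression telescopes:
\[
h^{(m,a)} = s + \frac{s^2}{L_0} + \frac{s}{L_0}\bigl(h^{(m-1,a)} - L_0 - s\bigr) = \frac{s\cdot h^{(m-1,a)}}{L_0},
\]
which is precisely the recursion.

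Finally, for the identity $h^{(m,a)} = y_{k_i}\lambda_i^{(m,a)}$ (with $i\in\{3,\ldots,n\}$), I would argue by induction on $m$. When $m=0$, one has $\lambda_i^{(0,a)} = 2t_{n-1}/y_{k_i}$, so $y_{k_i}\lambda_i^{(0,a)} = 2t_{n-1} = h^{(0,a)}$. For the induction step, the recursive definition of $\lambda_i^{(m,a)}$ for $i\ge 3$ and the just-established recursion for $h^{(m,a)}$ combine to give
\[
y_{k_i}\lambda_i^{(m,a)} = \frac{y_{k_i}\lambda_i^{(m-1,a)}\cdot s}{L_0} = \frac{h^{(m-1,a)}\cdot s}{L_0} = h^{(m,a)},
\]
using the induction hypothesis in the middle equality. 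There is no real obstacle here; the only point requiring care is the bookkeeping for the sum $\sum_{i\ge 3}\lambda_i^{(m-1,a)}$ in the telescoping step, which otherwise goes through mechanically.
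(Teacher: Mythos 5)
Your proof is correct and is exactly the ``straightforward calculation'' that the paper omits: the base case via Lemma~\ref{lem:sylv}\eqref{item:sylv_3}, the telescoping sum for the recursion, and the short induction for $h^{(m,a)}=y_{k_i}\lambda_i^{(m,a)}$ all check out.
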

\begin{proof}
  This is a straightforward calculation.
\end{proof}

\begin{lemma}\label{canonlemma2}\label{canonlemma8}
  Let $\lambda_i^{(m,a)}$ be as in
  Proposition~\ref{canonicalwpsmutationgraph}, let $a \in
  \{0,1,\ldots,n-2\}$, and let $m\geq 1$. Then:
  \[
  \frac{\lambda_0^{(m,a)}\lambda_1^{(m,a)}}{h^{(m,a)}}<\frac{1}{2t_{n-1}}
  \]
\end{lemma}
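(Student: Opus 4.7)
The plan is a short induction on $m$, showing that the quantity $r_m := \lambda_0^{(m,a)}\lambda_1^{(m,a)}/h^{(m,a)}$ is monotonically decreasing and that its value at $m=1$ already beats $1/(2t_{n-1})$.

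First I would use the defining recursions together with Lemma~\ref{canonlemma6} to establish a recursion for $r_m$. Since $\lambda_0^{(m,a)}=\lambda_2^{(m-1,a)}$, $\lambda_1^{(m,a)}=\lambda_1^{(m-1,a)}$, and $h^{(m,a)}=(\lambda_1^{(m-1,a)}+\lambda_2^{(m-1,a)})h^{(m-1,a)}/\lambda_0^{(m-1,a)}$, a direct computation gives
\[
r_m=r_{m-1}\cdot\frac{\lambda_2^{(m-1,a)}}{\lambda_1^{(m-1,a)}+\lambda_2^{(m-1,a)}}.
\]
Because all weights are positive integers, the fraction is strictly less than $1$, so $(r_m)_{m\ge 1}$ is strictly decreasing.

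Next I would handle the base case $m=1$. From the definitions, $\lambda_0^{(1,a)}=\lambda_2^{(0,a)}=1$ and $\lambda_1^{(1,a)}=\lambda_1^{(0,a)}=2t_{n-1}/y_a$; Lemma~\ref{canonlemma6} then gives $h^{(1,a)}=(2t_{n-1}/y_a+1)\cdot 2t_{n-1}$. These combine to yield
\[
r_1=\frac{2t_{n-1}/y_a}{(2t_{n-1}/y_a+1)\cdot 2t_{n-1}}=\frac{1}{2t_{n-1}+y_a}.
\]
Since $y_a\ge y_0=2>0$, we have $r_1<1/(2t_{n-1})$. Combining this with the monotonicity from the first step gives $r_m\le r_1<1/(2t_{n-1})$ for every $m\ge 1$, which is the desired bound.

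There is no real obstacle here; the only observation needed is that the $h^{(m,a)}$ recursion in Lemma~\ref{canonlemma6} interacts with the $\lambda_i^{(m,a)}$ recursion to give a telescoping shrinkage of $r_m$. The base-case computation is then a one-line verification using $y_a\ge 2$.
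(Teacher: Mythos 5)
Your proof is correct and is essentially the paper's argument in a different dress: the paper also proves the bound by induction on $m$, checking the case $m=1$ explicitly and showing that the relevant ratio (there written as $\lambda_i^{(m,a)}/\lambda_0^{(m,a)}$ via the identity $h^{(m,a)}=y_{k_i}\lambda_i^{(m,a)}$ of Lemma~\ref{canonlemma6}) moves monotonically in the right direction, which is equivalent to your statement that $r_m$ is decreasing since $\lambda_1^{(m,a)}$ is constant in $m$. Your explicit evaluation $r_1=1/(2t_{n-1}+y_a)$ is a pleasant simplification of the base case but does not change the substance.
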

\begin{proof}
  By Lemma~\ref{canonlemma6} it suffices to prove that
  $\frac{2t_{n-1}\lambda_1^{(m,a)}}{y_{k_i}}<\frac{\lambda_i^{(m,a)}}{\lambda_0^{(m,a)}}$.
  This evidently holds for $m=1$, and
  \[
  \frac{\lambda_i^{(m,a)}}{\lambda_0^{(m,a)}}
  =\frac{\lambda_i^{(m-1,a)}}{\lambda_0^{(m-1,a)}}\frac{(\lambda_1^{(m-1,a)}+\lambda_2^{(m-1,a)})}{\lambda_2^{(m-1,a)}}\\
  >\frac{\lambda_i^{(m-1,a)}}{\lambda_0^{(m-1,a)}}
  \]
  The result follows by induction on $m$.
\end{proof}

\begin{lemma}\label{canonlemma7}
  Let $\lambda_i^{(m,a)}$ be as in
  Proposition~\ref{canonicalwpsmutationgraph}, let $a \in
  \{0,1,\ldots,n-2\}$, let $m\geq 0$, and let $i\in\{ 3,\ldots,
  n\}$. Then:
  \begin{equation*} 
    y_{k_i}\divides\lambda_2^{(m,a)} -\lambda_0^{(m,a)}
  \end{equation*}
\end{lemma}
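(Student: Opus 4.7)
The plan is to proceed by induction on $m$. The base case $m=0$ is immediate: $\lambda_2^{(0,a)} - \lambda_0^{(0,a)} = 1 - 1 = 0$, which is divisible by every integer.

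For the inductive step, I would substitute the recursion into $\lambda_2^{(m,a)} - \lambda_0^{(m,a)}$, obtaining
\[
\lambda_2^{(m,a)} - \lambda_0^{(m,a)}
= \frac{\bigl(\lambda_1^{(m-1,a)}+\lambda_2^{(m-1,a)}\bigr)^2}{\lambda_0^{(m-1,a)}} - \lambda_2^{(m-1,a)},
\]
and then rewrite the numerator of the resulting fraction, after expanding the square, in the form
\[
\lambda_1^{(m-1,a)}\bigl(\lambda_1^{(m-1,a)}+2\lambda_2^{(m-1,a)}\bigr) + \lambda_2^{(m-1,a)}\bigl(\lambda_2^{(m-1,a)}-\lambda_0^{(m-1,a)}\bigr).
\]
This is the key algebraic manoeuvre: it separates out a term handled by induction from a term handled by the Sylvester divisibility properties.

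Next I would verify that $y_{k_i}$ divides this numerator. The second summand is divisible by $y_{k_i}$ by the inductive hypothesis. For the first summand, since $k_i$ lies in $\{0,\ldots,n-2\}\setminus\{a\}$ by definition of the omitted-index set, Lemma~\ref{lem:sylv}\eqref{item:sylv_2} gives $y_{k_i}\divides 2t_{n-1}/y_a = \lambda_1^{(0,a)} = \lambda_1^{(m-1,a)}$, so this summand is also divisible by $y_{k_i}$.

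Finally, to promote divisibility of the numerator to divisibility of the quotient by $\lambda_0^{(m-1,a)}$, I would show $\gcd(y_{k_i},\lambda_0^{(m-1,a)})=1$. This follows by combining the previous observation that $y_{k_i}\divides\lambda_1^{(m-1,a)}$ with Lemma~\ref{claim5}, which asserts that $\lambda_0^{(m-1,a)}$ and $\lambda_1^{(m-1,a)}$ are coprime. The main obstacle is largely bookkeeping: one must package the numerator so that exactly the inductive hypothesis on $\lambda_2^{(m-1,a)}-\lambda_0^{(m-1,a)}$ applies, while the remaining pieces fall to Lemma~\ref{lem:sylv}; no deeper new ingredient is needed.
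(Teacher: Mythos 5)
Your proof is correct and follows essentially the same route as the paper's: induction on $m$, a decomposition of the numerator into a piece divisible by $y_{k_i}$ via the Sylvester divisibility $y_{k_i}\divides 2t_{n-1}/y_a=\lambda_1^{(m-1,a)}$ and a piece handled by the inductive hypothesis, followed by coprimality of $y_{k_i}$ with the denominator $\lambda_0^{(m-1,a)}$ (Lemma~\ref{claim3}). The only cosmetic difference is that the paper telescopes through $\lambda_2^{(m,a)}-\lambda_0^{(m-1,a)}$, whereas you work directly with $\lambda_2^{(m,a)}-\lambda_2^{(m-1,a)}$ and use the equally valid identity $(\lambda_1+\lambda_2)^2-\lambda_0\lambda_2=\lambda_1(\lambda_1+2\lambda_2)+\lambda_2(\lambda_2-\lambda_0)$.
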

\begin{proof}
  We proceed by induction on $m$. The base case $m=0$ is trivial.
  Suppose now that $y_{k_i}\divides\lambda_2^{(m-1,a)}
  -\lambda_0^{(m-1,a)}$.  Then, since $\lambda_2^{(m,a)}
  -\lambda_0^{(m,a)} = (\lambda_2^{(m,a)} -\lambda_0^{(m-1,a)} ) -
  (\lambda_2^{(m-1,a)} -\lambda_0^{(m-1,a)} )$, it suffices to show
  that $y_{k_i}$ divides $\lambda_2^{(m,a)}
  -\lambda_0^{(m-1,a)}$. But:
  \[
  \lambda_2^{(m,a)} -\lambda_0^{(m-1,a)}
  =
  \frac{\lambda_1^{(m-1,a)} (\lambda_1^{(m-1,a)} +
    2\lambda_2^{(m-1,a)}) + (\lambda_2^{(m-1,a)}
    -\lambda_0^{(m-1,a)})(\lambda_2^{(m-1,a)} +\lambda_0^{(m-1,a)})}
  {\lambda_0^{(m-1,a)}}
  \]
  Now $y_{k_i}$ divides $\lambda_1^{(m,a)} = \frac{2t_{n-1}}{y_a}$ and
  $y_{k_i}\divides (\lambda_2^{(m-1,a)} -\lambda_0^{(m-1,a)})$, so
  $y_{k_i}$ divides the numerator here. Lemma~\ref{claim3} implies
  that $y_{k_i}$ is coprime to the denominator. Thus $y_{k_i}$ divides
  $\lambda_2^{(m,a)} -\lambda_0^{(m-1,a)}$.
\end{proof}

\begin{lemma}
  \label{lem:not_canonical}
  Let $\lambda_i^{(m,a)}$ be as in
  Proposition~\ref{canonicalwpsmutationgraph}, let $a \in
  \{0,1,\ldots,n-2\}$, let $m \geq 1$, and let
  $\kappa^{(m,a)}=h^{(m,a)}-(\lambda_1^{(m-1,a)}+\lambda_2^{(m-1,a)})$.
  Then $\kappa^{(m,a)}\in\{2,\ldots,h^{(m,a)}-2\}$, and:
  \[
  \sum_{i=0}^{n}\left\{\frac{\lambda_i^{(m,a)}\kappa^{(m,a)}}{h^{(m,a)}}\right\}
  > n-1
  \]
  where $\{x\}$ denotes the fractional part of $x$.
\end{lemma}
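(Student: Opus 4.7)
The plan is to reduce the claim to a modular sum identity and handle it in three pieces. Setting $s=\lambda_1^{(m-1,a)}+\lambda_2^{(m-1,a)}$ and writing $\lambda_i,h,\kappa$ for $\lambda_i^{(m,a)},h^{(m,a)},\kappa^{(m,a)}$, the recursion gives $\lambda_0=\lambda_2^{(m-1,a)}$ and $\lambda_1=\lambda_1^{(m-1,a)}$, so $\kappa=h-s=\sum_{i\geq 2}\lambda_i$. Since $n\geq 4$ and all weights are positive, this already gives $\kappa\in\{2,\ldots,h-2\}$. From $\kappa\equiv -s\pmod h$ we get $\{\lambda_i\kappa/h\}=\{-\lambda_i s/h\}$, and the identity $\{-x\}+\{x\}=1$ (if $x\notin\Z$) or $0$ (if $x\in\Z$) turns the sum into
\[
\sum_{i=0}^n\{\lambda_i\kappa/h\}=\bigl|\{i:h\notdivides\lambda_i s\}\bigr|-\sum_{i=0}^n\{\lambda_i s/h\}.
\]
As $\sum_i\lambda_i s/h=s\in\Z$, the quantity $S:=\sum_i\{\lambda_i s/h\}$ is a nonnegative integer; a short case analysis shows the desired inequality $>n-1$ is equivalent to $S=1$ together with $h\notdivides\lambda_i s$ for every $i$.

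I would then compute $S$ by splitting the range of $i$. For $i\geq 3$, Lemma~\ref{canonlemma6} gives $\lambda_i/h=1/y_{k_i}$, so $\lambda_i s/h=s/y_{k_i}$; since $y_{k_i}\divides\lambda_1^{(m-1,a)}=2t_{n-1}/y_a$ (because $k_i\neq a$) and $\lambda_2^{(m-1,a)}\equiv 1\pmod{y_{k_i}}$ by Lemma~\ref{canonlemma1}, we get $s\equiv 1\pmod{y_{k_i}}$, so $\{\lambda_i s/h\}=1/y_{k_i}$. Summing via Sylvester's identity (Lemma~\ref{lem:sylv}\eqref{item:sylv_3}) gives $\sum_{i\geq 3}\{\lambda_i s/h\}=(t_{n-1}-1)/t_{n-1}-1/y_a$. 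For $i\in\{0,1,2\}$, substituting the recursion yields $\lambda_0 s/h=\lambda_0^{(m-1,a)}\lambda_2^{(m-1,a)}/h^{(m-1,a)}$, $\lambda_1 s/h=\lambda_0^{(m-1,a)}\lambda_1^{(m-1,a)}/h^{(m-1,a)}$, and $\lambda_2 s/h=s^2/h^{(m-1,a)}$. Their algebraic sum equals $s(\lambda_0^{(m-1,a)}+\lambda_1^{(m-1,a)}+\lambda_2^{(m-1,a)})/h^{(m-1,a)}=s(1/y_a+1/t_{n-1})$, the second equality using the preceding calculation applied at level $m-1$.

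The crux is then the fractional-part identity
\[
  \{\lambda_0^{(m-1,a)}\lambda_2^{(m-1,a)}/h^{(m-1,a)}\}+\{\lambda_0^{(m-1,a)}\lambda_1^{(m-1,a)}/h^{(m-1,a)}\}+\{s^2/h^{(m-1,a)}\} = \tfrac{1}{y_a}+\tfrac{1}{t_{n-1}}.
\]
Its modulo-$1$ version follows from the congruences $s\equiv -1\pmod{y_a}$ and $s\equiv 2t_{n-1}/y_a+1\pmod{t_{n-1}}$, which themselves use Lemma~\ref{sylv++} (for $\lambda_1^{(m-1,a)}\equiv -2\pmod{y_a}$), Lemma~\ref{canonlemma1}, and the mild strengthening $\lambda_2^{(m,a)}\equiv 1\pmod{y_a}$; this strengthening I would prove by a quick parallel induction using $\lambda_2^{(m)}\lambda_0^{(m-1)}=(\lambda_1+\lambda_2^{(m-1)})^2\equiv(-2+1)^2=1\pmod{y_a}$. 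The hard part will be upgrading this modulo-$1$ statement to the exact equality above: the sum of three fractional parts agrees modulo~$1$ but could a priori equal $1/y_a+1/t_{n-1}+k$ for some $k\in\{0,1,2\}$, and I need $k=0$. I would argue this by induction on $m$, using the base case $m=1$ (a direct computation giving the three fractional parts as $1/y_a$, $1/(2t_{n-1})$, $1/(2t_{n-1})$), Lemma~\ref{canonlemma2} to bound the first term by $1/(2t_{n-1})$ for $m\geq 2$, and the reduction $\lambda_0^{(m)}\lambda_2^{(m)}/h^{(m)}=\lambda_2^{(m-1)}s^{(m-1)}/h^{(m-1)}$ (and its analogue for the $s^2/h$ term) to recycle the inductive step.

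Once the fractional-part identity is proved, combining with the $i\geq 3$ computation gives $S=1$. Each $\{\lambda_i s/h\}$ is nonzero: for $i\geq 3$ immediately from $1/y_{k_i}>0$; for $i\in\{0,1,2\}$ from the explicit expressions, using for instance that $\lambda_0^{(m-1,a)}\lambda_2^{(m-1,a)}\equiv 1\pmod{y_{k_i}}$ for any $k_i\neq a$, hence is not divisible by $h^{(m-1,a)}$, and similarly for the other two terms. Therefore $\sum_i\{\lambda_i\kappa/h\}=(n+1)-1=n>n-1$, as required.
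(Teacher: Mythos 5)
Your reduction is correct and, after a change of variable, essentially equivalent to the paper's: writing $s=\lambda_1^{(m-1,a)}+\lambda_2^{(m-1,a)}$ so that $\kappa^{(m,a)}\equiv -s\pmod{h^{(m,a)}}$, the bound $>n-1$ does come down to showing that $S=\sum_{i}\{\lambda_i^{(m,a)}s/h^{(m,a)}\}$ equals $1$ with no term vanishing, and your evaluation of the terms with $i\geq 3$, your base case $m=1$, and the modulo-$1$ version of the three-term identity (including the strengthening $y_a\divides\lambda_2^{(m,a)}-1$) are all sound. The genuine gap is exactly where you flag it: pinning down the integer $k\in\{0,1,2\}$. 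Your proposed induction does not close. The inductive hypothesis at level $m-1$ controls only the \emph{sum} of the three fractional parts of $\lambda_0^{(m-1,a)}\lambda_2^{(m-1,a)}/h^{(m-1,a)}$, $\lambda_0^{(m-1,a)}\lambda_1^{(m-1,a)}/h^{(m-1,a)}$ and $s^2/h^{(m-1,a)}$, whereas the reduction $\lambda_0^{(m,a)}\lambda_2^{(m,a)}/h^{(m,a)}=\lambda_2^{(m-1,a)}s/h^{(m-1,a)}$ produces a quantity that is not among those three, so ``recycling the inductive step'' has nothing to latch onto. What is actually needed to exclude $k=1$ (given $\{\lambda_0\lambda_1/h\}<\tfrac{1}{2t_{n-1}}$ from Lemma~\ref{canonlemma2} and the trivial bound $\{s^2/h\}<1$) is a uniform estimate $\{\lambda_0^{(l,a)}\lambda_2^{(l,a)}/h^{(l,a)}\}\leq \tfrac{1}{y_a}+\tfrac{1}{2t_{n-1}}$ for all $l$. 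That is precisely the content of the paper's key claim \eqref{not_canonical_claim}, and its proof is a two-step descent showing $\{\lambda_2^{(l,a)}s^{(l,a)}/h^{(l,a)}\}=\{\lambda_0^{(l-1,a)}\lambda_2^{(l-1,a)}/h^{(l-1,a)}\}$, which rests on the integrality of $(\lambda_2^{(l,a)}-\lambda_0^{(l,a)})(\lambda_0^{(l,a)}+\lambda_1^{(l,a)}+\lambda_2^{(l,a)})/h^{(l,a)}$, i.e.\ on Lemmas~\ref{canonlemma5} and~\ref{canonlemma7}. Neither of these enters your outline for this step, and without them (or an equivalent) the integer ambiguity is not resolved.

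A secondary structural point: your reformulation is less forgiving than the paper's. Because $S$ is forced to be a positive integer, you must evaluate every fractional part exactly --- in particular $\{s^2/h^{(m-1,a)}\}$, which corresponds to $\{\kappa^{(m,a)}\lambda_2^{(m,a)}/h^{(m,a)}\}$ --- while the paper computes the other $n$ terms and uses only $\{\kappa^{(m,a)}\lambda_2^{(m,a)}/h^{(m,a)}\}\geq 0$. The two formulations are ultimately equivalent (the total is an integer strictly between $n-1$ and $n+1$, hence exactly $n$), but the slack the paper exploits on the $i=2$ term is exactly the slack your argument must instead recover through the period-two recursion above.
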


\begin{proof}
  The first statement follows immediately from Lemma~\ref{claim1}.  We
  claim that:
  \begin{equation}
    \label{not_canonical_claim}
    \left\{\dfrac{\kappa^{(m,a)}\lambda_0^{(m,a)}}{h^{(m,a)}}\right\}
    =\begin{cases} 1 - \frac{1}{2t_{n-1}} &\text{if $m$ is odd}\\
      1 - \frac{1}{2t_{n-1}} - \frac{1}{y_a} &\text{if $m$ is even}
    \end{cases}
  \end{equation}
  Note that:
  \begin{align} 
    \dfrac{\kappa^{(m,a)}\lambda_0^{(m,a)}}{h^{(m,a)}} & 
    = 1 -\dfrac{(\lambda_1^{(m-1,a)}+\lambda_2^{(m-1,a)})\lambda_0^{(m,a)}}{y_{k_i}\lambda_i^{(m,a)}}
    \qquad\text{by Lemma~\ref{canonlemma6}} \notag \\
    & = 1
    -\dfrac{\lambda_0^{(m-1,a)}\lambda_0^{(m,a)}}{y_{k_i}\lambda^{(m-1,a)}_i}
    \notag \\
    & = 1
    -\dfrac{\lambda^{(m-2,a)}_0\lambda^{(m-1,a)}_0\lambda^{(m-1,a)}_2}{y_{k_i}\lambda^{(m-2,a)}_i
      (\lambda^{(m-2,a)}_1 +\lambda^{(m-2,a)}_2)} \notag\\
    & = 1 -\dfrac{\lambda_2^{(m-2,a)} (\lambda_1^{(m-2,a)}
      +\lambda^{(m-2,a)}_2)}{y_{k_i}\lambda^{(m-2,a)}_i} \notag\\
    & = 1 -\dfrac{\lambda_0^{(m-2,a)} (\lambda_1^{(m-3,a)}
      +\lambda^{(m-3,a)}_2)}{y_{k_i}\lambda^{(m-2,a)}_i} \notag \\
    & \qquad 
    -\dfrac{\lambda^{(m-2,a)}_2 (\lambda_1^{(m-2,a)} +\lambda^{(m-2,a)}_2)
      -\lambda^{(m-2,a)}_0 (\lambda_1^{(m-2,a)}
      +\lambda^{(m-2,a)}_0)}{y_{k_i}\lambda^{(m-2,a)}_i}
    \label{this_term}
  \end{align}
  We claim that the last term \eqref{this_term} here is an
  integer. It is equal to:
  \[
  \frac{(\lambda^{(m-2,a)}_2 -\lambda^{(m-2,a)}_0)(\lambda^{(m-2,a)}_0
    +\lambda_1^{(m-2,a)} +\lambda^{(m-2,a)}_2)}{h^{(m-2,a)}}
  \]
  Now:
  \begin{align*}\dfrac{\lambda^{(m-2,a)}_0 +\lambda_1^{(m-2,a)}
      +\lambda^{(m-2,a)}_2}{h^{(m-2,a)}} 
    & =\dfrac{ h^{(m-2,a)} -\sum_{i=3}^n\lambda^{(m-2,a)}_i}{h^{(m-2,a)}}\\
    & = 1 -\sum_{i=3}^n\dfrac{\lambda_i^{(m-2,a)}}{h^{(m-2,a)}}\\
    & = 1 -\sum_{i=3}^n \frac{1}{y_{k_i}} && \text{by Lemma~\ref{canonlemma6}}\\
    & = 1 -\sum_{i=0}^{n-2} \frac{1}{y_i} + \frac{1}{y_a}\\
    & = \frac{1}{t_{n-1}} + \frac{1}{y_a} && \text{ by Lemma~\ref{lem:sylv}}
  \end{align*}
  Hence \eqref{this_term} is equal to:
  \begin{equation*}
    \frac{(\lambda^{(m-2,a)}_2 -\lambda^{(m-2,a)}_0)
      (t_{n-1}/y_a + 1)}{t_{n-1}}
  \end{equation*}
  Recall that $t_{n-1} =\prod_{i=0}^{n-2} y_i$. We will show that each
  $y_i$ divides the numerator of this expression. By
  Lemma~\ref{canonlemma5} we have that $y_a\divides
  \frac{t_{n-1}}{y_a} + 1$, and by Lemma~\ref{canonlemma7} we have
  that, for all $i \ne a$, $y_i\divides\lambda_2^{(m-2,a)}
  -\lambda_0^{(m-2,a)}$. Hence \eqref{this_term} is an integer. Thus:
  \begin{equation*}
    \left\{\dfrac{\kappa^{(m,a)}\lambda_0^{(m,a)}}{h^{(m,a)}}\right\} =\left\{ 1 -\dfrac{\lambda_0^{(m-2,a)} (\lambda_1^{(m-3,a)} +\lambda^{(m-3,a)}_2)}{y_{k_i}\lambda^{(m-2,a)}_i}\right\} =\left\{\dfrac{\kappa^{(m-2,a)}\lambda_0^{(m-2,a)}}{h^{(m-2,a)}}\right\}
  \end{equation*}
  Since \eqref{not_canonical_claim} holds for $m=1$ and $m=2$, by
  induction it holds for all $m$.

  Lemma~\ref{canonlemma6} implies that
  \begin{equation*}
    \dfrac{\kappa^{(m,a)}}{h^{(m,a)}}=1-\dfrac{\lambda_0^{(m-1,a)}}{h^{(m-1,a)}}
  \end{equation*}
  We have:
  \begin{align*}
    \left\{\dfrac{\kappa^{(m,a)}\lambda_1^{(m,a)}}{h^{(m,a)}}\right\}
    & =\left\{1-\dfrac{\lambda_0^{(m-1,a)}\lambda_1^{(m,a)}}{h^{(m-1,a)}}\right\}
    \\
    & = 1-\dfrac{\lambda_0^{(m-1,a)}\lambda_1^{(m,a)}}{h^{(m-1,a)}}
    && \text{by Lemma~\ref{canonlemma2}}
    \intertext{and for $i\in\{3,\ldots\,n\}$ we have:} 
    \left\{\dfrac{\kappa^{(m,a)}\lambda_i^{(m,a)}}{h^{(m,a)}}\right\}&=
    \left\{1-\dfrac{\lambda_i^{(m-1,a)}(\lambda_1^{(m-1,a)}+\lambda_2^{(m-1,a)})}{h^{(m-1,a)}}\right\}\\
    &=\left\{1-\dfrac{\lambda_1^{(m-1,a)}+\lambda_2^{(m-1,a)}}{y_{k_i}}\right\}
    && \text{by Lemma~\ref{canonlemma6}}\\
    &=\left\{1-\dfrac{\lambda_1^{(m-1,a)}}{y_{k_i}}-\dfrac{\lambda_2^{(m-1,a)}-1}{y_{k_i}}-\dfrac{1}{y_{k_i}}\right\}\\
    & =1-\dfrac{1}{y_{k_i}}
    && \text{by Lemma~\ref{canonlemma1}}
    \intertext{Putting this all together, for $m=1$ we obtain:}
    \sum_{i=0}^{n}\left\{\frac{\kappa^{(1,a)}\lambda_i^{(1,a)}}{h^{(1,a)}}\right\}
    &=\left\{\frac{\kappa^{(1,a)}\lambda_2^{(1,a)}}{h^{(1,a)}}\right\}+n -\sum_{i=0}^{n-2}\dfrac{1}{y_i}-\dfrac{1}{2t_{n-1}}\\
    &=\left\{\frac{\kappa^{(1)}\lambda_2^{(1)}}{h^{(1)}}\right\}+n
    -\dfrac{t_{n-1}-1}{t_{n-1}}-\dfrac{1}{2t_{n-1}}
    && \text{by Lemma~\ref{lem:sylv}}\\
    &>n-1
  \end{align*}
  and for $m\geq 2$ we obtain:
  \begin{align*}
    \sum_{i=0}^{n}\left\{\frac{\kappa^{(m,a)}\lambda_i^{(m,a)}}{h^{(m,a)}}\right\}&
    \geq\left\{\frac{\kappa^{(m,a)}\lambda_2^{(m,a)}}{h^{(m,a)}}\right\}+n -\sum_{i=0}^{n-2}\dfrac{1}{y_i}-\dfrac{1}{2t_{n-1}}-\dfrac{\lambda_0^{(m-1,a)}\lambda_1^{(m,a)}}{h^{(m-1,a)}}\\
    &\geq (n -1)+\dfrac{1}{2t_{n-1}}-\dfrac{\lambda_0^{(m-1,a)}\lambda_1^{(m,a)}}{h^{(m-1,a)}}\\
    &>n-1
  \end{align*}
  where at the last step we used Lemma~\ref{canonlemma8}. 
\end{proof}

\begin{lemma}\label{claim11term}
  Let $\lambda_i^{(m,a)}$ be as in
  Proposition~\ref{terminalwpsmutationgraph}, let $n\ge 5$, let $a \in
  \{0,1,\ldots,n-2\}$, and let $i$,~$j\in\{3,\ldots,n\}$ be
  distinct. There exists a prime $p$ such that, for all $m\geq 0$, $p$
  divides $\lambda_1^{(m,a)}$, $\lambda_i^{(m,a)}$, and
  $\lambda_j^{(m,a)}$ but $p$ does not divide $\lambda_0^{(m,a)}$ or
  $\lambda_2^{(m,a)}$.
\end{lemma}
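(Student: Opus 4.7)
The plan is to find a single prime $p = p(a,i,j)$ that works uniformly for all $m\geq 0$, and to verify the required divisibility properties by induction on $m$.

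For the base case $m = 0$, the indices $k_i,k_j$ appearing in the definition of $\lambda_i^{(0,a)},\lambda_j^{(0,a)}$ lie in $\{0,\ldots,\widehat{a},\ldots,n-2\}$ and are distinct, so $\{a,k_i,k_j\}$ is a set of three distinct elements of $\{0,\ldots,n-2\}$. Since $n\geq 5$, the complement $\{0,\ldots,n-2\}\setminus\{a,k_i,k_j\}$ is nonempty, so I would fix some $l$ in this complement and let $p$ be any prime divisor of $y_l$. Using $t_{n-1} = \prod_{r=0}^{n-2} y_r$ and the pairwise coprimality of the Sylvester numbers (Lemma~\ref{lem:sylv}), one sees that $y_l \divides t_{n-1}/y_r$ for every $r\neq l$. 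Applied to $r\in\{a,k_i,k_j\}$ this shows $p$ divides all three of $\lambda_1^{(0,a)}$, $\lambda_i^{(0,a)}$ and $\lambda_j^{(0,a)}$, while trivially $p\notdivides\lambda_0^{(0,a)}=\lambda_2^{(0,a)}=1$.

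For the inductive step, I would assume the conclusion at level $m-1$ and read off the five weights from the recursion of Proposition~\ref{terminalwpsmutationgraph}. The key observation is that $p\divides\lambda_1^{(m-1,a)}$ and $p\notdivides\lambda_2^{(m-1,a)}$ together force $p\notdivides\lambda_1^{(m-1,a)}+\lambda_2^{(m-1,a)}$. With this in hand: $\lambda_1^{(m,a)} = \lambda_1^{(m-1,a)}$ remains divisible by $p$ and $\lambda_0^{(m,a)} = \lambda_2^{(m-1,a)}$ remains coprime to $p$; both the numerator and denominator of $\lambda_2^{(m,a)} = (\lambda_1^{(m-1,a)}+\lambda_2^{(m-1,a)})^2/\lambda_0^{(m-1,a)}$ are coprime to $p$, so $p\notdivides\lambda_2^{(m,a)}$; and for $l\in\{i,j\}$ the factor $\lambda_l^{(m-1,a)}$ in the numerator of $\lambda_l^{(m,a)} = \lambda_l^{(m-1,a)}(\lambda_1^{(m-1,a)}+\lambda_2^{(m-1,a)})/\lambda_0^{(m-1,a)}$ is divisible by $p$ while the denominator is not, giving $p\divides\lambda_l^{(m,a)}$.

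The only real obstacle, and the reason this lemma diverges from its canonical counterpart Lemma~\ref{claim11}, is the choice of prime. In the canonical case the extra factor of $2$ in each weight of $X_n$ let one take $p=2$ uniformly for all $a,i,j$; here no such shortcut is available and a prime must be chosen specifically for each triple $(a,i,j)$. The existence of a suitable $p$ is exactly what the assumption $n\geq 5$ guarantees, as it makes $\{0,\ldots,n-2\}$ large enough that one can remove the three forbidden indices $a$, $k_i$, $k_j$ and still have a remaining index whose Sylvester number supplies the prime.
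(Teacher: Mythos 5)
Your proof is correct and follows essentially the same route as the paper: choose a prime $p$ dividing $y_{k_l}$ for some index $k_l\in\{0,\ldots,n-2\}\setminus\{a,k_i,k_j\}$ (possible since $n\geq 5$), verify the base case via the divisibility properties of the Sylvester numbers, and induct on $m$. The paper dismisses the induction step as straightforward; your explicit verification of it — in particular the observation that $p\divides\lambda_1^{(m-1,a)}$ and $p\notdivides\lambda_2^{(m-1,a)}$ force $p\notdivides\lambda_1^{(m-1,a)}+\lambda_2^{(m-1,a)}$ — is exactly the intended argument.
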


\begin{proof}
  We have
  $\lambda_i^{(0,a)}=\frac{t_{n-1}}{y_{k_i}}$,~$\lambda_j^{(0,a)}=\frac{t_{n-1}}{y_{k_j}}$,
  and $\lambda_1^{(0,a)}=\frac{t_{n-1}}{y_a}$. Since $n\geq 5$ we can
  find $k_l \in \{0,1,\ldots,n-2\} \setminus \{a,k_i,k_j\}$.  Let $p$
  be a prime dividing $y_{k_l}$. Then $p$ divides
  $\lambda_i^{(0,a)}$,~$\lambda_j^{(0,a)}$, and $\lambda_1^{(0,a)}$
  and thus, by Lemma~\ref{claim3}, $p$ divides neither
  $\lambda_0^{(0,a)}$ nor $\lambda_2^{(0,a)}$.  We now proceed by
  induction on $m$: the induction step is straightforward.
\end{proof}

\begin{lemma}
  \label{lem:not_terminal}
  Let $\lambda_i^{(m,a)}$ be as in
  Proposition~\ref{terminalwpsmutationgraph}, let $a \in
  \{0,1,\ldots,n-2\}$, let $m \geq 1$, and let
  $\kappa^{(m,a)}=h^{(m,a)}-h^{(m-1,a)}$.  Then
  $\kappa^{(m,a)}\in\{2,\ldots,h^{(m,a)}-2\}$, and:
  \[
  \sum_{i=0}^{n}\left\{\frac{\lambda_i^{(m,a)}\kappa^{(m,a)}}{h^{(m,a)}}\right\}
  < 2
  \]
  where $\{x\}$ denotes the fractional part of $x$.
\end{lemma}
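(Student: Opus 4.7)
The approach is to exploit the recursion
\[
h^{(m,a)} = \frac{\lambda_1^{(m-1,a)}+\lambda_2^{(m-1,a)}}{\lambda_0^{(m-1,a)}} \cdot h^{(m-1,a)},
\]
which holds for the weights of Proposition~\ref{terminalwpsmutationgraph} by exactly the same calculation that proves the corresponding statement for the canonical weights (cf.~Lemma~\ref{canonlemma6}); the recursive step depends only on the formulae expressing $\lambda_i^{(m,a)}$ in terms of $\lambda_i^{(m-1,a)}$, which coincide in the two settings. In particular $\kappa^{(m,a)}/h^{(m,a)} = 1 - \lambda_0^{(m-1,a)}/\bigl(\lambda_1^{(m-1,a)}+\lambda_2^{(m-1,a)}\bigr)$.

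The heart of the argument is a short computation: using the defining recursion for $\lambda_i^{(m,a)}$,
\[
\lambda_i^{(m,a)}\cdot\frac{h^{(m-1,a)}}{h^{(m,a)}} = \frac{\lambda_i^{(m,a)}\lambda_0^{(m-1,a)}}{\lambda_1^{(m-1,a)}+\lambda_2^{(m-1,a)}}
\]
equals $\lambda_i^{(m-1,a)}$ when $i\geq 3$ and equals $\lambda_1^{(m-1,a)}+\lambda_2^{(m-1,a)}$ when $i=2$---both integers. Hence $\lambda_i^{(m,a)}\kappa^{(m,a)}/h^{(m,a)}$ is an integer for every $i\in\{2,\ldots,n\}$, and contributes nothing to $\sum_{i=0}^n\bigl\{\lambda_i^{(m,a)}\kappa^{(m,a)}/h^{(m,a)}\bigr\}$. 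The sum therefore collapses to the two terms $i=0,1$, which is strictly less than $2$ since each fractional part lies in $[0,1)$. (In fact the undropped terms sum to the integer $\lambda_0^{(m,a)}+\lambda_1^{(m,a)} - \lambda_0^{(m-1,a)}$, so the fractional-part sum is precisely $0$ or $1$.)

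Finally I would check $\kappa^{(m,a)}\in\{2,\ldots,h^{(m,a)}-2\}$. The upper bound is immediate from $h^{(m-1,a)}\geq 2$; for the lower bound, the recursion rewrites
\[
\kappa^{(m,a)} = h^{(m-1,a)}\cdot\frac{\lambda_1^{(m-1,a)}+\lambda_2^{(m-1,a)}-\lambda_0^{(m-1,a)}}{\lambda_0^{(m-1,a)}}.
\]
For $m=1$ this evaluates to $y_{n-1}\cdot t_{n-1}/y_a \geq 2$; for $m\geq 2$ the analog of Lemma~\ref{claim1} gives $\lambda_2^{(m-1,a)}>\lambda_0^{(m-1,a)}=\lambda_2^{(m-2,a)}$ strictly, hence $\lambda_1^{(m-1,a)}+\lambda_2^{(m-1,a)}-\lambda_0^{(m-1,a)}\geq 2$, and the bound follows from $h^{(m-1,a)}\geq\lambda_0^{(m-1,a)}$.

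The substance of the proof is the integrality identity in the second paragraph; everything else is bookkeeping. The only potential obstacle is confirming that the recursive formula for $h^{(m,a)}$ and the strict monotonicity from Lemma~\ref{claim1} transfer to the terminal setting, but both are formal consequences of the shared recursive step for the weights, so this is routine.
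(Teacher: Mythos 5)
Your proposal is correct and follows essentially the same route as the paper: both use the recursion $\kappa^{(m,a)}/h^{(m,a)} = 1 - \lambda_0^{(m-1,a)}/\bigl(\lambda_1^{(m-1,a)}+\lambda_2^{(m-1,a)}\bigr)$ to show the fractional parts vanish for $i\in\{2,\ldots,n\}$, leaving only the $i=0,1$ terms, each less than $1$. The only difference is cosmetic: the paper dispatches the range condition on $\kappa^{(m,a)}$ by citing Lemma~\ref{claim1}, where you verify it directly.
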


\begin{proof}
  The first statement follows immediately from Lemma~\ref{claim1}.
  The conclusions of Lemma~\ref{canonlemma6} hold here too, and thus:
  \begin{equation*}
     \frac{\kappa^{(m,a)}}{h^{(m,a)}}=1-\frac{\lambda_0^{(m-1,a)}}{\lambda_1^{(m-1,a)}+\lambda_2^{(m-1,a)}}
  \end{equation*}
  It follows that:
  \begin{align*}
    \left\{\frac{\kappa^{(m,a)}\lambda_2^{(m,a)}}{h^{(m,a)}}\right\}&=
    \left\{-\lambda_2^{(m,a)}\frac{\lambda_0^{(m-1,a)}}{\lambda_1^{(m-1,a)}+\lambda_2^{(m-1,a)}}\right\}
    = 0 \intertext{and, for $i\in\{3,\ldots,n\}$:}
    \left\{\frac{\kappa^{(m,a)}\lambda_i^{(m,a)}}{h^{(m,a)}}\right\}&
    =\left\{-\lambda_i^{(m,a)}\frac{\lambda_0^{(m-1,a)}}{\lambda_1^{(m-1,a)}+\lambda_2^{(m-1,a)}}\right\}
    = 0
\end{align*}
Thus:
\begin{equation*} 
  \sum_{i=0}^{n}\left\{\frac{\kappa^{(m,a)}\lambda_i^{(m,a)}}{h^{(m,a)}}\right\}
  =\left\{\frac{\kappa^{(m,a)}\lambda_0^{(m,a)}}{h^{(m,a)}}\right\}+\left\{\frac{\kappa^{(m,a)}\lambda_1^{(m,a)}}{h^{(m,a)}}\right\}
  < 2
\end{equation*}
\end{proof}
\newcommand{\etalchar}[1]{$^{#1}$}
\providecommand{\bysame}{\leavevmode\hbox to3em{\hrulefill}\thinspace}
\providecommand{\MR}{\relax\ifhmode\unskip\space\fi MR }
\providecommand{\MRhref}[2]{%
  \href{http://www.ams.org/mathscinet-getitem?mr=#1}{#2}
}
\providecommand{\href}[2]{#2}

\end{document}